\numberwithin{equation}{section}
\theoremstyle{plain}
\newtheorem{thm}{Theorem}[section]
\newtheorem{lemma}[thm]{Lemma}
\newtheorem{prop}[thm]{Proposition}
\theoremstyle{definition}
\newtheorem{defn}[thm]{Definition}
\newtheorem{condition}[thm]{Condition}
\newtheorem{notation}[thm]{Notation}
\theoremstyle{remark}
\newtheorem{rem}[thm]{Remark}
\newtheorem{remark}[thm]{Remark}
\newtheorem*{remark*}{Remark}
\numberwithin{equation}{section}
\newcommand{\cK}{{\mathcal{K}}}
\newcommand{\cO}{{\mathcal{O}}}
\newcommand{\wi}{\widetilde}
\DeclareMathOperator{\ke}{Ker}
\DeclareMathOperator{\pr}{pr}
\newcommand{\calig}[1]{\mathcal{#1}}
\newcommand\mE{\calig{E}}
\newcommand\mH{\calig{H}}
 \newcommand\mO{\calig{O}}
\newcommand\mQ{\calig{Q}} 
\newcommand\mS{\calig{S}} 
\newcommand\mK{\calig{K}}
\newcommand{\om}{\omega}
\newcommand{\ov}{\overline}
\newcommand{\abs}[1]{\lvert#1\rvert}
        \newcommand{\field}[1]{{\mathbb{#1}}}
        \newcommand{\NN}{\field{N}}
\newcommand{\supp}{\operatorname{supp}}
\newcommand{\End}{\mbox{\rm End}}
\begin{document}

\title[Generalized Bergman kernels]{Generalized Bergman kernels 
on symplectic manifolds of bounded geometry}

\author[Y. A. Kordyukov]{Yuri A. Kordyukov}
\address{Institute of Mathematics with Computing Centre\\
Ufa Federal Research Centre of 
         Russian Academy of Sciences\\
         112~Chernyshevsky str.\\ 450008 Ufa\\ Russia}
		 \email{yurikor@matem.anrb.ru}

\thanks{Y.\ K.\ supported by the Russian Science Foundation,
project no. 17-11-01004.
 }

\author[X. Ma]{Xiaonan Ma}
\address{Institut de Math\'ematiques de Jussieu--Paris Rive Gauche,
UFR de Math{\'e}matiques,
Universit{\'e} Paris Diderot - Paris 7, Case 7012,
75205 Paris Cedex 13, \,France}
\email{xiaonan.ma@imj-prg.fr}

\thanks{X.\ M.\ partially supported by NNSFC 11528103, 
ANR-14-CE25-0012-01 and funded through the Institutional Strategy 
of the University of Cologne within the German Excellence Initiative.}

\author[G. Marinescu]{George Marinescu}
\address{Universit{\"a}t zu K{\"o}ln,  Mathematisches Institut,
    Weyertal 86-90,   50931 K{\"o}ln, Germany
    \newline
    \mbox{\quad}\,Institute of Mathematics `Simion Stoilow',
	Romanian Academy,
Bucharest, Romania} \email{gmarines@math.uni-koeln.de}

\thanks{G.\ M.\ partially supported by DFG funded
project SFB TRR 191}

\subjclass[2000]{Primary 58J37; Secondary 53D05}

\keywords{Bergman kernel, Bochner-Laplacian, symplectic manifold, bounded geometry, Berezin-Toeplitz quantization, orbifold}

\begin{abstract}
We study the asymptotic behavior of the generalized Bergman kernel 
of the renormalized Bochner-Laplacian on high tensor powers of
a positive line bundle on a symplectic manifold of bounded geometry.
First, we establish the off-diagonal exponential estimate for 
the generalized Bergman kernel. As an application, we obtain 
the relation between the generalized Bergman kernel on 
a Galois covering of a compact symplectic manifold and 
the generalized Bergman kernel on the base. 
Then we state the full off-diagonal asymptotic expansion of 
the generalized Bergman kernel, improving the remainder 
estimate known in the compact case to an exponential decay. 
Finally, we establish 
the theory of Berezin-Toeplitz quantization on symplectic orbifolds
associated with the renormalized Bochner-Laplacian.

\end{abstract}

\date{}

 \maketitle
\tableofcontents
%%%%%%%%%%%%%%%%%%%%%%%%%%%%%%%%%%
%%%%%%%%%%%%%%%%%%%%%%%%%%%%%%%%%%
\section{Introduction}
In this article, we consider a smooth symplectic manifold $(X,\omega)$
of dimension $2n$. Let $(L,h^L)$ be a Hermitian line bundle on $X$
with a Hermitian connection
$\nabla^L : \mathscr{C}^\infty(X,L)\to 
\mathscr{C}^\infty(X,T^*X\otimes L)$.
We assume that $L$ satisfies the prequantization condition:
\begin{equation}\label{e1.1}
\frac{i}{2\pi}R^L=\omega,
\end{equation}
where $R^L=(\nabla^L)^2$ is the curvature of the connection $\nabla^L$.
Let $(E, h^E)$ be a Hermitian vector bundle on $X$
with Hermitian connection $\nabla^E$ and its curvature $R^E$.

Let $g^{TX}$ be a Riemannian metric on $X$ and $\nabla^{TX}$ be
the Levi-Civita connection of $(X, g^{TX})$.
Let $J_0 : TX\to TX$ be a skew-adjoint operator such that
\begin{equation}\label{e1.2}
\omega(u,v)=g^{TX}(J_0u,v), \quad u,v\in TX.
\end{equation}
Consider the operator $J : TX\to TX$ given by
\begin{equation}\label{e1.3}
J=J_0(-J^2_0)^{-1/2}.
\end{equation}
Then $J$ is an almost complex structure compatible
with $\omega$ and $g^{TX}$, that is, $g^{TX}(Ju,Jv)=g^{TX}(u,v)$,
$\omega(Ju,Jv)=\omega(u,v)$ for any $u,v\in TX$ and
$\omega(u,Ju)>0$ for any $u\in TX$, $u\neq0$.

Let $\nabla^{L^p\otimes E}: \mathscr{C}^\infty(X,L^p\otimes E)\to
\mathscr{C}^\infty(X, T^*X \otimes L^p\otimes E)$ be the connection
on $L^p\otimes E$ induced by $\nabla^{L}$ and $\nabla^E$.
Denote by $\Delta^{L^p\otimes E}$ the induced
Bochner-Laplacian acting on $\mathscr{C}^\infty(X,L^p\otimes E)$ by
\begin{equation}\label{e1.4}
\Delta^{L^p\otimes E}=\big(\nabla^{L^p\otimes E}\big)^{\!*}\,
\nabla^{L^p\otimes E},
\end{equation}
where $\big(\nabla^{L^p\otimes E}\big)^{\!*}:
\mathscr{C}^\infty(X,T^*X\otimes L^p\otimes E)\to
\mathscr{C}^\infty(X,L^p\otimes E)$ denotes the formal adjoint of 
the operator $\nabla^{L^p\otimes E}$.  
The \textbf{\emph{renormalized Bochner-Laplacian}} is a differential
operator $\Delta_p$ acting on $\mathscr{C}^\infty(X,L^p\otimes E)$ by
 \begin{equation}\label{e:Delta_p}
\Delta_p=\Delta^{L^p\otimes E}-p\tau,
  \end{equation}
where $\tau\in \mathscr{C}^\infty(X)$ is given by
 \begin{equation}\label{tau}
 \tau(x)=-\pi \operatorname{Tr}[J_0(x)J(x)],\quad x\in X.
 \end{equation}

The renormalized Bochner-Laplacian was introduced by
Guillemin and Uribe in \cite{Gu-Uribe}.
When $(X,\omega)$ is a K\"{a}hler manifold, it is twice the
corresponding Kodaira-Laplacian on functions
$\Box^{L^p}=\bar\partial^{L^p*}\bar\partial^{L^p}$.
 The asymptotic of the spectrum of the operator
 $\Delta_p$ as $p\to \infty$ was studied in
 \cite{BVa89,B-Uribe,Gu-Uribe, ma-ma02,MM07}.

In this article, we will suppose that $(X, g^{TX})$ is complete
and $R^L$, $R^E$, $J$, $g^{TX}$ have bounded geometry
(i.e., they and their derivatives of any order are uniformly
bounded on $X$ in the norm induced by $g^{TX}$, $h^L$
and $h^E$, and the injectivity radius of $(X, g^{TX})$ is positive).
We will also assume that
\begin{equation}\label{mu}
 \mu_0=\inf_{\substack{x\in X \\ u\in T_xX\setminus\{0\}}}
 \frac{iR^L_x(u,J(x)u)}{|u|_{g^{TX}}^2}>0.
 \end{equation}
Note that  $\lambda(x)=
\inf_{u\in T_xX\setminus\{0\}}iR^L_x(u,J(x)u)/|u|_{g^{TX}}^2$
is the smallest eigenvalue of $iR^L_x(\cdot,J(x)\,\cdot)$
with respect to $g^{TX}_x$, for $x\in X$. Thus \eqref{mu}
is a condition of uniform positivity of $R^L$ with respect to $g^{TX}$.

Since $(X,g^{TX})$ is complete, the Bochner-Laplacian and 
the renormalized Bochner-Laplacian $\Delta_p$
are essentially self-adjoint, see Theorem \ref{esa}. 
We still denote by $\Delta_p$ the unique self-adjoint extension of 
$\Delta_p:\mathscr{C}^\infty_c(X,L^p\otimes E)\to
\mathscr{C}^\infty_c(X,L^p\otimes E)$ 
acting on compactly supported smooth sections, 
and by $\sigma(\Delta_p)$ its spectrum in
$L^2(X,L^p\otimes E)$.
First, we state the following spectral gap property for the operator 
$\Delta_p$ which is a direct consequence of \cite[Lemma 1]{ma-ma15}. 

\begin{thm}\label{t:gap0}
Let $(X,\omega)$ be a symplectic manifold with a prequantum line bundle
$(L,\nabla^L,h^L)$. Let $g^{TX}$ be a complete
Riemannian metric on $X$ and let $J$ be the almost complex
structure defined by \eqref{e1.3}. Let
$(E,\nabla^E, h^E)$ be an auxiliary vector bundle on $X$.
We assume that $R^L$, $R^E$, $J$, $g^{TX}$ have bounded geometry
and \eqref{mu} holds.
Then there exists $C_L>0$ such that for any $p\in \mathbb N$
the spectrum of the renormalized Bochner-Laplacian \eqref{e:Delta_p}
satisfies
 \begin{equation}\label{e1.8}
 \sigma(\Delta_p)\subset \big[\!-C_L,C_L\big]\cup
 \big[2p\mu_0-C_L,+\infty\big).
 \end{equation}
 \end{thm}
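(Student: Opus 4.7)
The plan is to reduce the spectral gap for $\Delta_p$ to a positivity estimate for an associated first-order operator via a Bochner-Kodaira identity of Ma-Marinescu type. I would exhibit a closable first-order operator $T$ with two key properties: $T^*T$ agrees with $\Delta_p$ up to a uniformly bounded perturbation, while $TT^*$ is bounded below by $2p\mu_0-C$ for some constant $C$ independent of $p$. The gap \eqref{e1.8} then follows by combining the fact that $T^*T$ and $TT^*$ share the same nonzero spectrum with the standard bound on how the spectrum of a self-adjoint operator moves under a bounded perturbation.

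Using the almost complex structure $J$ from \eqref{e1.3} one decomposes $T^*X\otimes_{\RR}\CC = T^{*(1,0)}X\oplus T^{*(0,1)}X$ and splits $\nabla^{L^p\otimes E}=\nabla^{(1,0)}+\nabla^{(0,1)}$. Set
\[
T:=\sqrt{2}\,\nabla^{(0,1)}\colon\mathscr{C}^\infty_c(X,L^p\otimes E)\longrightarrow\mathscr{C}^\infty_c(X,T^{*(0,1)}X\otimes L^p\otimes E).
\]
A direct commutator computation, using \eqref{e1.1}--\eqref{tau} to express the $p$-dependent terms in $[\nabla^{L^p\otimes E}_w,\nabla^{L^p\otimes E}_{\bar w}]$ through $\omega$ and $\tau$, yields
\[
T^*T\;=\;\Delta^{L^p\otimes E}\;-\;p\tau\;+\;R_0\;=\;\Delta_p\;+\;R_0,
\]
where $R_0$ is a self-adjoint bundle endomorphism of $L^p\otimes E$ whose pointwise norm is uniformly bounded in $p\in\NN$, thanks to the bounded-geometry hypothesis on $R^L$, $R^E$, $J$ and $g^{TX}$.

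An analogous Bochner-Weitzenböck computation for $TT^*$ on $T^{*(0,1)}X\otimes L^p\otimes E$ introduces a Clifford-type curvature term associated with the action of $R^L$ on the $(0,1)$-factor. Under the uniform positivity hypothesis \eqref{mu} this fibrewise endomorphism is pointwise bounded below by $2p\mu_0$; after absorbing the remaining lower-order contributions into a uniformly bounded remainder one obtains the operator inequality
\[
TT^*\;\geq\;2p\mu_0\cdot\Id\;-\;C_1
\]
on $L^2(X,T^{*(0,1)}X\otimes L^p\otimes E)$, with $C_1>0$ independent of $p$.

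Since $(X,g^{TX})$ is complete, $T$ is closable and the polar decomposition of $\overline{T}$ forces $T^*T$ and $TT^*$ to share the same nonzero spectrum. Combined with the preceding operator inequality this places $\sigma(T^*T)\subset\{0\}\cup[2p\mu_0-C_1,\infty)$. The identity $\Delta_p=T^*T-R_0$ together with $\|R_0\|\leq C_0$ uniformly in $p$ and the standard perturbation principle $\sigma(A+B)\subset\{\lambda:d(\lambda,\sigma(A))\leq\|B\|\}$ for bounded self-adjoint $B$ then yield
\[
\sigma(\Delta_p)\subset[-C_0,C_0]\cup[2p\mu_0-C_0-C_1,\infty),
\]
which is \eqref{e1.8} upon setting $C_L:=C_0+C_1$. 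The main technical point is to verify that the various torsion-type terms appearing in the almost-complex (non-Kähler) Bochner-Weitzenböck calculations above really reduce to uniformly bounded bundle endomorphisms rather than to unbounded first-order operators; this is precisely what the bounded-geometry hypothesis ensures.
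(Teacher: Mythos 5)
Your overall strategy---build a first-order operator $T$ with $T^*T=\Delta_p+O(1)$ and transfer a curvature lower bound through the relation between the nonzero spectra of $T^*T$ and $TT^*$---is in the spirit of the argument the paper cites (\cite[Corollary 1.2]{ma-ma02} together with \cite[Lemma 1]{ma-ma15}; the paper itself does not reproduce a proof). The identity $T^*T=\Delta_p+R_0$ with $R_0$ uniformly bounded is correct. However, the crucial inequality $TT^*\geq 2p\mu_0\,\Id-C_1$ on $L^2(X,T^{*(0,1)}X\otimes L^p\otimes E)$ is false, and this is where the proof breaks.

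With $T=\sqrt{2}\,\nabla^{(0,1)}$ mapping functions to $(0,1)$-forms, $\Ker(TT^*)=\Ker T^*$ is the orthogonal complement of $\overline{\operatorname{Range}T}$, which is nontrivial and, for $n\geq2$, infinite-dimensional: the principal symbol of $TT^*$ at $\xi\in T^*_xX$ is a \emph{rank-one} positive semidefinite endomorphism of the rank-$n$ fiber $T^{*(0,1)}_xX\otimes(L^p\otimes E)_x$, so $TT^*$ is not even elliptic when $n\geq2$. Hence $0\in\sigma(TT^*)$ and no lower bound $TT^*\geq 2p\mu_0-C_1>0$ can hold. A Bochner--Kodaira--Nakano identity does produce a curvature term bounded below by $2p\mu_0$, but only for the \emph{full} Laplacian $\nabla^{(0,1)}(\nabla^{(0,1)})^*+(\nabla^{(0,1)})^*\nabla^{(0,1)}$ on $(0,1)$-forms---where the second summand involves the raising part $\Omega^{0,1}\to\Omega^{0,2}$---not for $TT^*$ alone. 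In the K\"ahler case one can still extract the gap: $\bar\partial^2=0$ forces any eigenvector of $\bar\partial\bar\partial^*$ with nonzero eigenvalue to be $\bar\partial$-closed, so it is an eigenvector of the full $\Box$ on $(0,1)$-forms with the same eigenvalue. But in the almost-complex setting $(\nabla^{(0,1)})^2\neq0$ and this intertwining is destroyed. That is precisely why the cited proof works on the whole $\Omega^{0,\bullet}$-complex with the spin$^c$ Dirac operator $D_p$: the Lichnerowicz formula yields $\sigma(D_p^2)\subset\{0\}\cup[2p\mu_0-C,\infty)$ directly, the degree-zero compression of $D_p^2$ differs from $\Delta_p$ by a uniformly bounded endomorphism (this is the analogue of your $T^*T$ step, with $T=D_p|_{\Omega^{0,0}}$), and the gap is then transferred by an argument that uses $D_p$ acting on all degrees, rather than a pointwise lower bound on $TT^*$.
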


When $X$ is compact and $E$ is the trivial line bundle,
this theorem (with a not precised constant $\mu_0$)
is the main result of Guillemin and Uribe \cite{Gu-Uribe}.
For a general vector bundle $E$, it was proved by Ma and Marinescu
\cite[Corollary 1.2]{ma-ma02}, cf.\ also \cite[Theorem 8.3.1]{MM07},
with the geometric constant $\mu_0$ given by \eqref{mu}.
The analogous theorem for the spin$^c$ Dirac operator
on a manifold of bounded geometry is stated in \cite[Lemma 1]{ma-ma15}.
Theorem~\ref{t:gap0} can be directly derived from this result,
following the proof of \cite[Corollary 1.2]{ma-ma02} (see
also \cite[Corollary 4.7]{ma-ma02} for the case of a covering of 
a compact manifold),
thus we will not repeat this proof here.
Note that there are cases when the renormalized Bochner-Laplacian
has a spectral gap even if the curvature $R^L$ degenerates at finite order
\cite[Remark 22]{MS18}.

For a Borel set $B\subset\mathbb{R}$,
we denote by $\mathcal{E}(B,\Delta_p)$ the spectral projection 
corresponding to the subset $B$.
Consider the spectral space $\mathcal H_p\subset L^2(X,L^p\otimes E)$
of $\Delta_p$
corresponding to $[-C_L,C_L]$,
\begin{equation}\label{e:hp}
\mH_p:=\operatorname{Range}\,\mE([-C_L,C_L],\Delta_p).
\end{equation}
If $X$ is compact, the spectrum of $\Delta_p$ is discrete
and $\mH_p$ is the subspace spanned by the eigensections of $\Delta_p$ 
corresponding to eigenvalues
in $[-C_L,C_L]$. Let 
\begin{equation}\label{php}
P_{\mathcal H_p}:=\mE([-C_L,C_L],\Delta_p)
:L^2(X,L^p\otimes E)\longrightarrow \mathcal H_p, 
\end{equation}
be the orthogonal projection.
Let $\pi_1$ and $\pi_2$ be the projections of $X\times X$ on
the first and second factor. The Schwartz kernel of the operator
$P_{\mathcal H_p}$ with respect to the Riemannian volume form
$dv_{X}$ is a smooth section
$P_{p}(\cdot,\cdot)\in \mathscr{C}^\infty(X\times X,
\pi_1^*(L^p\otimes E)\otimes \pi_2^*(L^p\otimes E)^*)$, 
see \cite[Remark 1.4.3]{MM07}. It is called the 
\textbf{\emph{generalized Bergman kernel}} of $\Delta_p$
in \cite{ma-ma08}, since it generalizes the Bergman kernel 
on complex manifolds.

\begin{thm}\label{t:mainPp}
Under the assumptions of Theorem \ref{t:gap0},
there exists $c>0$ such that for any $k\in \mathbb N$,
there exists $C_k>0$ such that for any $p\in \mathbb N$, 
$x, x^\prime \in X$, we have
\begin{equation}\label{e1.9}
\big|P_p(x, x^\prime)\big|_{\mathscr{C}^k}\leq C_k p^{n+\frac{k}{2}}
e^{-c\sqrt{p} \,d(x, x^\prime)}.
\end{equation}
\end{thm}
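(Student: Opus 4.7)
The plan is to combine Theorem~\ref{t:gap0} with the finite propagation speed of the wave equation for $\Delta_p$ and uniform analytic estimates coming from bounded geometry. Since the spectrum of $\Delta_p$ has a gap of size $\sim p$ between $C_L$ and $2p\mu_0 - C_L$, the corresponding gap for $A_p := \sqrt{\Delta_p + C_L + 1}$ has size $\sim \sqrt p$; combined with unit propagation speed of the wave operator generated by $A_p$, this is what ultimately produces the rate $e^{-c\sqrt p\, d(x, x')}$.

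Concretely, I would choose, for each $p$, a smooth real cutoff $\phi_p \in \mathscr{C}^\infty(\mathbb R)$ with $\phi_p \equiv 1$ on $[-C_L, C_L]$ and $\phi_p \equiv 0$ on $[2p\mu_0 - C_L, +\infty)$, so that \eqref{e1.8} gives $P_{\mathcal H_p} = \phi_p(\Delta_p)$. Writing $\tilde\phi_p(a) := \phi_p(a^2 - C_L - 1)$, which is even in $a$, the spectral theorem yields
\begin{equation*}
P_p(x, x') = \frac{1}{\sqrt{2\pi}} \int_{\mathbb R} \widehat{\tilde\phi_p}(\xi)\, \cos(\xi A_p)(x, x')\, d\xi.
\end{equation*}
Because $(X, g^{TX})$ is complete of bounded geometry, $\Delta_p$ is essentially self-adjoint (Theorem~\ref{esa}) and the standard Chernoff-type finite-propagation-speed theorem applies to $\cos(\xi A_p)$: its Schwartz kernel is supported in the $|\xi|$-neighbourhood of the diagonal, so $\cos(\xi A_p)(x, x') = 0$ for $|\xi| < d(x, x')$. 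The $\xi$-integration is therefore restricted to $|\xi| \geq d(x, x')$.

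The exponential decay then follows by choosing $\phi_p$ in an appropriate Gevrey class so that $|\widehat{\tilde\phi_p}(\xi)| \leq C\, e^{-c\sqrt p\,|\xi|}$; such a $\phi_p$ exists precisely because the effective gap of $A_p$ is of order $\sqrt p$, leaving enough room to smooth a characteristic-function-like profile at the scale $\sqrt p$. Combined with the restriction $|\xi| \geq d(x, x')$ and $\|\cos(\xi A_p)\|_{L^2 \to L^2} = 1$, this yields the $L^2$ form of the estimate. To reach the pointwise $\mathscr{C}^k$ bound with the stated power $p^{n + k/2}$, I would introduce normal coordinates centred at $x$ (and $x'$), perform the rescaling $y \mapsto y/\sqrt p$ under which $\Delta_p$ becomes a differential operator with coefficients bounded uniformly in $p$, and invoke uniform Sobolev embedding and interior elliptic regularity -- both valid with $p$- and base-point-independent constants by bounded geometry. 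The factor $p^n$ arises from the Jacobian of this rescaling in real dimension $2n$, and each derivative in the original variables contributes an extra $\sqrt p$ coming from the rescaled differentials.

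The main technical obstacle lies in the construction of the Gevrey-class cutoff $\phi_p$ realising the Fourier decay $e^{-c\sqrt p\,|\xi|}$ with all constants uniform in $p$, together with the bookkeeping needed to keep all constants in Chernoff's theorem, Sobolev embedding, and elliptic regularity independent of $(x, x') \in X \times X$. Once this uniform setup is secured, the remaining computations -- Fourier inversion, wave-kernel finite propagation, and the rescaling argument -- assemble directly into \eqref{e1.9}.
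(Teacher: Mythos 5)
Your proposal is the finite-propagation-speed route of \cite{DLM04a,MM07,ma-ma08,ma-ma15}, which the paper explicitly identifies and \emph{rejects} for the renormalized Bochner--Laplacian: in \cite{ma-ma15} the small eigenvalues of $D_p^2$ are all exactly zero, while here they fill the whole interval $[-C_L,C_L]$, and that difference is fatal to your construction. The concrete gap is the claim that one can choose $\phi_p$ with $\phi_p\equiv 1$ on $[-C_L,C_L]$, $\phi_p\equiv 0$ on $[2p\mu_0-C_L,\infty)$, and $|\widehat{\tilde\phi_p}(\xi)|\leq Ce^{-c\sqrt p|\xi|}$ with $c>0$ uniform in $p$. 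Such Fourier decay would force $\tilde\phi_p(a)=\phi_p(a^2-C_L-1)$ to extend holomorphically to a strip; but $\tilde\phi_p\equiv 1$ on one real interval and $\equiv 0$ on another, so holomorphicity forces $\tilde\phi_p$ constant, a contradiction. Rescaling a fixed Gevrey-$s$ cutoff ($s>1$ being needed for compact support) at scale $\sqrt p$ yields only $|\widehat{\tilde\phi_p}(\xi)|\lesssim\sqrt{p}\,e^{-c\,p^{1/(2s)}|\xi|^{1/s}}$, which is strictly weaker than what you need for every $s>1$. Nor can you trade the exact identity $\phi_p(\Delta_p)=P_{\mathcal H_p}$ for an approximate one in the style of the heat-kernel argument of \cite{ma-ma15}: there one uses that $e^{-u\lambda/p}=1$ on the kernel, whereas here, for $\lambda\in[-C_L,0)$, the factor $e^{-u\lambda/p}$ in fact \emph{grows} as $u$ increases, and the resulting error operator has no off-diagonal localization, so the estimate collapses for $d(x,x')$ large (on a non-compact $X$ this cannot be absorbed). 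This is precisely why the finite-propagation-speed/functional-calculus route is used in this paper only for the $\mathcal O(p^{-\infty})$ orbifold estimate (Proposition~\ref{0t3.0}), not for the exponential decay of Theorem~\ref{t:mainPp}.

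The paper's actual proof avoids the spectral cutoff altogether: it represents $P_{\mathcal H_p}$ by the Cauchy integral \eqref{bergman-integral} of $(\lambda-\frac{1}{p}\Delta_p)^{-m}$ over the circle $\delta$ and proves exponential off-diagonal decay of the resolvent kernel directly, by conjugating with the exponential weights $f_{\alpha,p,y}=e^{\alpha\widetilde d_{p,y}}$ for $|\alpha|<c\sqrt p$. Theorems~\ref{Thm1.7W} and~\ref{Thm1.9} show that the conjugated operator $\Delta_{p;\alpha,y}$ is a controlled perturbation of $\Delta_p$ in the $p$-rescaled Sobolev scale, so the uniform resolvent bounds of Theorem~\ref{Thm1.7} survive conjugation; the pointwise estimate of Theorem~\ref{p:west}, and hence Theorem~\ref{t:mainPp}, then follows from the $p$-adapted Sobolev embedding (Propositions~\ref{p:Sobolev}, \ref{p:delta}). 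The point is that the resolvent on $\delta$ is completely insensitive to where the small eigenvalues sit inside $[-C_L,C_L]$ --- the very feature that any cutoff-based functional calculus is forced to resolve, and cannot at the rate $e^{-c\sqrt p\,d}$.
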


Here $d(x,x^\prime)$ is the geodesic distance and 
$|P_p(x, x^\prime)|_{\mathscr{C}^k}$ denotes the pointwise 
$\mathscr{C}^k$-seminorm of the section $P_p$ at a point 
$(x, x^\prime)\in X\times X$, which is the sum of the norms induced 
by $h^L, h^E$ and $g^{TX}$ of the derivatives up to order $k$ of 
$P_p$ with respect to the connection $\nabla^{L^p\otimes E}$ and 
the Levi-Civita connection $\nabla^{TX}$ evaluated at $(x, x^\prime)$.

For the Bergman kernel of the spin$^c$ Dirac operator associated to
a positive line bundle on a symplectic manifold of bounded geometry, 
the same type of
exponential estimate is proved in \cite[Theorem 1]{ma-ma15}
(see also the references therein for the previous results).
In \cite{ma-ma15}, the authors use the methods of
\cite{DLM04a,MM07,ma-ma08} based on the spectral gap property 
of the spin$^c$ Dirac operator,  %renormalized Bochner-Laplacian, 
finite propagation speed arguments 
for the wave equation, the heat semigroup and rescaling of 
the spin$^c$ Dirac operator %renormalized Bochner-Laplacian 
near the diagonal, 
which is inspired by the analytic localization technique of 
Bismut-Lebeau \cite{BL}. It is important in \cite{ma-ma15}
that the eigenvalues of the associated Laplacian are either $0$ 
or tend to $+\infty$. In the current situation, 
the renormalized Bochner-Laplacian has possibly different bounded
eigenvalues, which makes difficult to use the heat semigroup technique. 
So we replace the heat semigroup technique by a different approach, 
which was developed by the first author in \cite{bergman}: 
We follow essentially the general strategy %constructions 
of \cite{DLM04a,MM07,ma-ma08} but use weighted 
estimates with appropriate exponential weights as in \cite{Kor91} 
instead of the use of the heat semigroup and finite propagation speed 
arguments. In \cite{Kor18}, this approach is used to prove asymptotic decay of order $\mathcal O(e^{-c\sqrt{p}})$ for eigenfunctions of a self-adjoint Toeplitz operator with discrete wells associated with the renormalized Bochner-Laplacian in the classically forbidden region.

As an application of our proof of Theorem~\ref{t:mainPp},
we obtain the relation between the generalized Bergman kernel
on a Galois covering of a compact symplectic manifold and
the generalized Bergman kernel on the base as an analog of
\cite[Theorem 2]{ma-ma15} for the Bergman kernel of the
spin$^c$ Dirac operator.

\begin{thm}\label{t:covering}
Let $(X, \omega)$ be a compact symplectic manifold.
Let $(L,\nabla^L, h^L)$, $(E, \nabla^E, h^E)$, $g^{TX}$
be given as above. Consider a Galois covering $\pi : \widetilde X\to X$ and
let $\Gamma$ be the group of deck transformations.
Denote by $\widetilde \omega$, 
$(\widetilde L,\nabla^{\widetilde L}, h^{\widetilde L})$,
$(\widetilde E, \nabla^{\widetilde E}, h^{\widetilde E})$, 
$g^{T\widetilde X}$ be the lifts
of the above data to $\widetilde X$. Let $\widetilde \Delta_p$
be the renormalized Bochner-Laplacian  acting on
$\mathscr{C}^\infty(\widetilde X, \widetilde L^p\otimes \widetilde E)$
and $\widetilde P_{p}(\cdot,\cdot)$ be the generalized Bergman kernel
of $\widetilde \Delta_p$. There exists $p_1\in \mathbb N$
such that for any $p>p_1$ we have for any $x,x^\prime\in \widetilde X$,
\begin{equation}\label{e:sum}
\sum_{\gamma\in \Gamma}\widetilde P_p(\gamma x,x^\prime)=
P_p(\pi(x), \pi(x^\prime)).
\end{equation}
\end{thm}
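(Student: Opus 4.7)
The plan is to realize both sides of \eqref{e:sum} as Schwartz kernels of the same smoothing operator on $X$, obtained from a Riesz projector associated with the spectral window $[-C_L,C_L]$. Since $\widetilde X\to X$ is a Galois covering of a compact symplectic manifold, the lifted data have bounded geometry and satisfy \eqref{mu} with the same $\mu_0$, so Theorems~\ref{t:gap0} and~\ref{t:mainPp} apply to $\widetilde\Delta_p$ with the same constants $C_L$ and $c$.

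First I would establish convergence. Theorem~\ref{t:mainPp} yields
\begin{equation*}
\big|\widetilde P_p(\gamma x, x')\big|_{\mathscr{C}^k}\le C_k p^{n+k/2}\,e^{-c\sqrt{p}\,d(\gamma x, x')},
\end{equation*}
while bounded geometry together with compactness of $X$ ensures that the orbit $\Gamma x\subset\widetilde X$ is uniformly discrete with at most exponential volume growth. Choosing $p_1$ so that $c\sqrt{p_1}$ exceeds twice this growth rate and simultaneously $2p_1\mu_0-C_L>2C_L$, one obtains, for $p>p_1$, absolute convergence of $\sum_\gamma\widetilde P_p(\gamma x,x')$ in every $\mathscr{C}^k$-seminorm, uniformly on compact subsets of $\widetilde X\times\widetilde X$. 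Being $\Gamma$-invariant in each factor, the limit descends to a smooth kernel $Q_p$ on $X\times X$. Unfolding $\widetilde X=\bigsqcup_\gamma\gamma F$ for a fundamental domain $F\subset\widetilde X$, a direct change of variables shows that the associated operator acts on $s\in\mathscr{C}^\infty_c(X,L^p\otimes E)$ by
\begin{equation*}
(Q_ps)(\pi(x))=\int_{\widetilde X}\widetilde P_p(x,x')\,\pi^* s(x')\,dv_{\widetilde X}(x'),
\end{equation*}
the integral converging absolutely since $\pi^*s$ is bounded and $\widetilde P_p(x,\cdot)$ decays exponentially.

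To identify $Q_p$ with $P_p$, the spectral gap \eqref{e1.8} places the spectra of $\Delta_p$ and $\widetilde\Delta_p$ in $[-C_L,C_L]\cup[2p\mu_0-C_L,\infty)$, so for $p>p_1$ a rectangular contour $C_0\subset\mathbb{C}$ enclosing $[-C_L,C_L]$ and staying at positive distance from $[2p\mu_0-C_L,\infty)$ gives the Riesz representations
\begin{equation*}
P_p=\frac{1}{2\pi i}\oint_{C_0}(\lambda-\Delta_p)^{-1}d\lambda,\qquad
\widetilde P_p=\frac{1}{2\pi i}\oint_{C_0}(\lambda-\widetilde\Delta_p)^{-1}d\lambda.
\end{equation*}
The key step, and the main obstacle, is the resolvent intertwining
\begin{equation*}
\pi_*(\lambda-\widetilde\Delta_p)^{-1}\widetilde h=(\lambda-\Delta_p)^{-1}\pi_*\widetilde h,\qquad \widetilde h\in\mathscr{C}^\infty_c(\widetilde X,\widetilde L^p\otimes\widetilde E),
\end{equation*}
where $\pi_*\widetilde h(y):=\sum_{\widetilde z\in\pi^{-1}(y)}\widetilde h(\widetilde z)$; $\pi_*$ is unbounded on $L^2$, so the spectral theorem cannot be invoked directly on non-$L^2$ sections such as $\pi^* s$. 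I would establish it from the exponential off-diagonal decay of the Green kernel of $(\lambda-\widetilde\Delta_p)^{-1}$ on the bounded-geometry manifold $\widetilde X$ (a standard consequence of Agmon-type weighted estimates, in the spirit of the weighted-estimate method used for Theorem~\ref{t:mainPp}): this makes $\widetilde u:=(\lambda-\widetilde\Delta_p)^{-1}\widetilde h$ $\Gamma$-orbit summable, and pushing $(\lambda-\widetilde\Delta_p)\widetilde u=\widetilde h$ down by $\pi_*$, using the algebraic identity $\pi_*\widetilde\Delta_p=\Delta_p\pi_*$ on smooth sections, yields the intertwining. Integrating it along $C_0$ gives $\pi_*\widetilde P_p=P_p\pi_*$ on $\mathscr{C}^\infty_c(\widetilde X,\widetilde L^p\otimes\widetilde E)$. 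Applied with $\widetilde h=\phi\cdot\pi^*s$ for a $\Gamma$-partition-of-unity factor $\phi\in\mathscr{C}^\infty_c(\widetilde X)$ (so that $\pi_*\widetilde h=s$ and $\sum_\gamma\gamma^*\widetilde h=\pi^*s$), this gives $Q_ps=P_ps$, whence \eqref{e:sum} upon comparing kernels.
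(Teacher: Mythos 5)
Your argument is correct and reaches the conclusion by a route that is closely related to, but technically distinct from, the paper's. Both proofs use the spectral gap, a contour-integral (Riesz projector) representation of $P_{\mathcal H_p}$, and exponential decay combined with Milnor's volume-growth bound for $\Gamma$ to sum over the deck group. The paper, however, works entirely at the kernel level and with higher resolvent powers: it fixes $m>2n+1$ so that the kernel $R^{(m)}_{\lambda,p}$ of $(\lambda-\tfrac{1}{p}\Delta_p)^{-m}$ is continuous (Theorem~\ref{p:west}), proves the pointwise identity $\sum_{\gamma}\widetilde R^{(m)}_{\lambda,p}(\gamma x,x')=R^{(m)}_{\lambda,p}(\pi x,\pi x')$ by observing that the $\Gamma$-averaged kernel satisfies the same distributional equation $(\lambda-\tfrac1p\Delta_p)^m u=\delta_{x'}$ and invoking uniqueness of the solution, and then integrates $\lambda^{m-1}R^{(m)}_{\lambda,p}$ over the circle $\delta$ as in \eqref{bergman-integral1}. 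You instead stay with the first resolvent ($m=1$), establish the operator-level intertwining $\pi_*(\lambda-\widetilde\Delta_p)^{-1}=(\lambda-\Delta_p)^{-1}\pi_*$ on compactly supported smooth sections via Agmon-type weighted decay of $(\lambda-\widetilde\Delta_p)^{-1}\widetilde h$, and integrate the Riesz projector. Your formulation sidesteps the fact that the resolvent kernel is not continuous in dimension $2n\geq 2$ by applying operators to test sections rather than evaluating kernels pointwise, at the cost of having to handle the pushforward $\pi_*$, which is unbounded on $L^2$. The decay you need for $(\lambda-\widetilde\Delta_p)^{-1}\widetilde h$ is the $m=1$ analogue of what Theorem~\ref{p:west} gives only for $m>2n+1$; it is not stated explicitly in the paper but does follow, uniformly for $\lambda$ on the contour, from the weighted resolvent bounds of Theorems~\ref{Thm1.7W} and~\ref{Thm1.9}, and you should make that dependence explicit. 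The paper's choice of large $m$ makes the $\Gamma$-sum converge directly in $\mathscr{C}^0$ and keeps the entire argument at the kernel level, which is arguably cleaner; your variant is more operator-theoretic and closer to the standard automorphic averaging picture.
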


This type of results has a long history.
In the category of complex manifolds it appeared in connection
with the theory of automorphic forms and Poincar\'e series
in the works of Selberg and Godement.
Earle \cite{Ea} gave a proof when $\widetilde X$
is a bounded symmetric domain
(under some hypothesis on the variation of Bergman kernels).
The second and third authors proved \eqref{e:sum}
for the Bergman kernels
associated to the spin$^c$ Dirac operator on a symplectic manifold,
in particular, in the K\"ahler case \cite[Theorem 2]{ma-ma15}.
Lu and Zelditch \cite{Lu-Z} independently proved \eqref{e:sum}
for the Bergman kernels on K\"ahler manifolds when $E=\mathbb{C}$.

As another application of the technique developed in this article, 
we extend the results on the full off-diagonal asymptotic expansion 
of the generalized Bergman kernels of the renormalized Bochner-Laplacians 
associated to high tensor powers of a positive line bundle 
over a compact symplectic manifold, obtained in \cite{lu-ma-ma,bergman},
to the case of manifolds of bounded geometry and slightly improve 
the remainder estimate in the asymptotic expansions, proving 
an exponential estimate $\mathcal O(e^{-c_0\sqrt{p}})$
instead of $\mathcal O(p^{-\infty})$ (see Theorem~\ref{t:main} below).

Finally, we study the theory of Berezin-Toeplitz quantization
on symplectic orbifolds by using as quantum spaces the spectral
spaces $\mH_p$, especially we show that the
set of Toeplitz operators forms an algebra.
Ma and Marinescu obtained first Berezin-Toeplitz quantization
on symplectic orbifolds by using as quantum spaces
the kernel of the spin$^c$ Dirac operator,
in particular, on compact complex orbifolds
\cite[Theorems 6.13, 6.16]{ma-ma08a}.
Let us note also that
Hsiao and Marinescu \cite{HM}
constructed a Berezin-Toeplitz quantization for eigenstates of
small eigenvalues in the case of complex manifolds.
For a comprehensive introduction to this subject
see \cite{ma:ICMtalk,MM07,MM11}.

The article is organized as follows. In Section~\ref{norm}, 
we collect some necessary background information on differential operators
and Sobolev spaces on manifolds of bounded geometry.
In Section~\ref{maintheorem}, we remind some results on 
weighted estimates on manifolds of bounded geometry and
prove Theorems~\ref{t:mainPp} and~\ref{t:covering}. 
Section~\ref{expansions} is devoted to the full off-diagonal 
asymptotic expansions. In Section~\ref{pbs4} we study 
Berezin-Toeplitz quantization on symplectic orbifolds.

%%%%%%%%%%%%%%%%%%%%%%%%%%%%%%%%%%%%%%%
%%%%%%%%%%%%%%%%%%%%%%%%%%%%%%%%%%%%%%%
\section{Preliminaries on differential operators and Sobolev spaces}
\label{norm}

In this section, we collect some necessary background information 
on differential operators and Sobolev spaces on manifolds of bounded 
geometry. We refer the reader to \cite{Kor91,ma-ma15} 
for more information. The novel point is that our constructions are adapted 
to a particular sequence of vector bundles $L^p\otimes E, p\in \mathbb N$. 
This concerns with a specific choice of the Sobolev norm as well as with
a slightly refined form of the Sobolev embedding theorem. 
We will keep the setting described in Introduction.

%%%%%%%%%%%%%%%%%%%%%%%%%%%%%%%%%%%%%%%
\subsection{Differential operators}
Let $\mathcal F$ be a vector bundle over $X$. Suppose that $\mathcal F$ 
is Euclidean or Hermitian depending on whether it is real or complex 
and equipped with a metric connection $\nabla^{\mathcal F}$. 
The Levi-Civita connection $\nabla^{TX}$ on $(X,g^{TX})$ and 
the connection $\nabla^{\mathcal F}$ define a metric connection
$\nabla^{\mathcal F} : \mathscr{C}^\infty(X, (T^*X)^{\otimes j}
\otimes \mathcal F)\to \mathscr{C}^\infty(X, (T^*X)^{\otimes (j+1)}
\otimes \mathcal F)$
on each vector bundle $(T^*X)^{\otimes j} \otimes \mathcal F$ for 
$j\in \mathbb N$, that allows us to introduce the operator 
$$\big(\nabla^{\mathcal F}\big)^{\!\ell} : 
\mathscr{C}^\infty(X, \mathcal F)
\to \mathscr{C}^\infty(X, (T^*X)^{\otimes \ell} \otimes \mathcal F)$$ 
for every $\ell\in \mathbb N$.
Any differential operator $A$ of order $q$ acting in 
$\mathscr{C}^\infty(X,\mathcal F)$ can be written as
\begin{equation}\label{e2.1}
A=\sum_{\ell=0}^q a_\ell\cdot \big(\nabla^{\mathcal F}\big)^{\!\ell},
\end{equation}
where $a_\ell \in \mathscr{C}^\infty(X, (TX)^{\otimes \ell})$ 
and the endomorphism 
$\cdot : (TX)^{\otimes \ell}\otimes ((T^*X)^{\otimes \ell}
\otimes \mathcal F) \to \mathcal F$ is given by the contraction.

If $\mathcal F$ has bounded geometry, we denote by 
$\mathscr{C}^k_b(X,\mathcal F)$ the space of sections 
$u\in \mathscr{C}^k(X,\mathcal F)$ such that
\begin{equation}\label{e2.2}
\|u\|_{\mathscr{C}^k_b}=\sup_{x\in X, \ell\leq k}
\Big|\big(\nabla^{\mathcal F}\big)^{\!\ell} u(x)\Big| <\infty,
\end{equation}
where $|\cdot|_x$ is the norm in
$(T^*_xX)^{\otimes \ell} \otimes \mathcal F_x$ 
defined by $g^{TX}$ and $h^\mathcal F$.
%and the supremum is taken over all $x\in X$ and $\ell\leq k$. 
We also denote by 
$BD^q(X,\mathcal F)$ the space of differential operators $A$ 
of order $q$ in $\mathscr{C}^\infty_c(X,\mathcal F)$ with 
coefficients $a_\ell$ in $\mathscr{C}^\infty_b(X, (TX)^{\otimes \ell})$.

Usually, we will deal with families of differential operators of the 
form $$\{A_p\in BD^q(X,L^p\otimes E), p\in \mathbb N^*\}.$$ 
We will say that such a family $\{A_p\in BD^q(X,L^p\otimes E),
p\in \mathbb N^*\}$ is bounded in $p$, if
\begin{equation}\label{e2.3}
A_p=\sum_{\ell=0}^q a_{p,\ell}\cdot \Big(\frac{1}{\sqrt{p}}
\nabla^{L^p\otimes E}\Big)^\ell,\quad a_{p,\ell} 
\in \mathscr{C}^\infty_b(X, (TX)^{\otimes \ell}),
\end{equation}
and, for any $\ell=0,1,\ldots,q$, the family
$\{a_{p,\ell}, p\in \mathbb N^*\}$
is bounded in the Frechet space 
$\mathscr{C}^\infty_b(X, (TX)^{\otimes \ell})$.
An example of a bounded in $p$ family of differential operators
is given by $\{\frac{1}{p}\Delta_p:p\in \mathbb N^*\}$.

\subsection{Sobolev spaces}
Denote by $dv_{X}$ the Riemannian volume form of $(X,g^{TX})$. 
The $L^2$-norm on $L^2(X,L^p\otimes E)$ is given by
\begin{equation}\label{e2.5}
\|u\|^2_{p,0}=\int_{X}|u(x)|^2dv_{X}(x), \quad u\in L^2(X,L^p\otimes E).
\end{equation}
For any integer $m>0$, we introduce the norm $\|\cdot\|_{p,m}$ 
on $\mathscr{C}^\infty_c(X,L^p\otimes E)$ by the formula
\begin{equation}\label{e2.6}
\|u\|^2_{p,m}=\sum_{\ell=0}^m \int_{X} \left|\Big(\frac{1}{\sqrt{p}}
\nabla^{L^p\otimes E}\Big)^\ell u(x)\right|^2 dv_{X}(x), 
\quad u\in H^m(X,L^p\otimes E).
\end{equation}
The completion of $\mathscr{C}^\infty_c(X,L^p\otimes E)$
with respect to $\|\cdot\|_{p,m}$ is the Sobolev space 
$H^m(X,L^p\otimes E)$ of order $m$.
Denote by $\langle\cdot,\cdot\rangle_{p,m}$ the corresponding inner 
product on $H^m(X,L^p\otimes E)$. For any integer $m<0$, 
we define the norm in the Sobolev space $H^m(X,L^p\otimes E)$ by duality.
For any bounded linear operator 
$A : H^m(X,L^p\otimes E)\to H^{m^\prime}(X,L^p\otimes E)$, 
$m,m^\prime\in \mathbb Z$, we will denote its operator norm by 
$\|A\|^{m,m^\prime}_p$.

One can easily derive the following mapping properties of 
differential operators in Sobolev spaces.

\begin{prop}\label{p:Sobolev-mapping}
Any operator $A\in BD^q(X,L^p\otimes E)$ defines a bounded operator 
\[A:H^{m+q}(X,L^p\otimes E)\longrightarrow H^{m}(X,L^p\otimes E)\]
for any $m\in \mathbb N$. Moreover, if a family 
$\{A_p\in BD^q(X,L^p\otimes E), p\in \mathbb N\}$
is bounded in $p$, then for any $m\in \mathbb N$, there exists 
$C_m>0$ such that, for all $p\in \mathbb N$,
\begin{equation}\label{e2.7}
\|A_pu\|_{p,m}\leq C_m\|u\|_{p,m+q},\quad 
u\in H^{m+q}(X,L^p\otimes E).
\end{equation}
\end{prop}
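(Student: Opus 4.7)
The plan is to prove both claims simultaneously by bounding each rescaled covariant derivative of $A_p u$ pointwise and then integrating. Since the second statement, applied to the constant family $A_p = A$, reduces to the first, it is enough to establish the uniform estimate.

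The central step is the pointwise inequality
\begin{equation*}
\Big| \big(p^{-1/2}\nabla^{L^p \otimes E}\big)^{\!k}(A_p u)(x) \Big| \leq C_{k,q} \sum_{j=0}^{k+q} \Big| \big(p^{-1/2}\nabla^{L^p \otimes E}\big)^{\!j} u(x) \Big|, \qquad x \in X,
\end{equation*}
with $C_{k,q}$ independent of $p$, proved by induction on $k$. Writing $A_p = \sum_{\ell=0}^q a_{p,\ell} \cdot (p^{-1/2}\nabla^{L^p\otimes E})^\ell$ and repeatedly applying the Leibniz rule for the rescaled connection on tensor bundles, one would express $(p^{-1/2}\nabla)^k(A_p u)$ as a sum of terms of the form $\bigl[(p^{-1/2}\nabla)^j a_{p,\ell}\bigr] \cdot \bigl[(p^{-1/2}\nabla)^{k-j+\ell} u\bigr]$, plus commutator contributions arising from reordering covariant derivatives across the contraction. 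These coefficients are bounded uniformly in $p$ for two reasons. First, boundedness of $\{a_{p,\ell}\}$ in $\mathscr{C}^\infty_b(X,(TX)^{\otimes\ell})$ together with $p^{-j/2} \leq 1$ yields $|(p^{-1/2}\nabla)^j a_{p,\ell}|(x) \leq C_{j,\ell}$ uniformly in $p$ and $x$. Second, since the curvature of $L^p \otimes E$ equals $p R^L + R^E$, the commutators produced by reordering two rescaled connections take the form $p^{-1}(p R^L + R^E) = R^L + p^{-1} R^E$, uniformly bounded under the bounded-geometry hypothesis on $R^L$ and $R^E$; similarly, any commutator involving a $\nabla^{TX}$-type factor produces only the curvature $R^{TX}$, bounded by the bounded geometry of $g^{TX}$.

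Once the pointwise bound is established, squaring it, integrating against $dv_{X}$, and summing over $k = 0, \ldots, m$ would deliver $\|A_p u\|_{p,m}^2 \leq C_m \|u\|_{p,m+q}^2$, which is the claim. The main obstacle is the bookkeeping inside the induction: every commutator generated by reordering rescaled connections across contractions with elements of $TX$ or $T^*X$ must be catalogued, and one must verify that no uncancelled factor of $p$ ever survives. This succeeds precisely because the Sobolev norm $\|\cdot\|_{p,m}$ is defined using $p^{-1/2}\nabla^{L^p\otimes E}$, the unique rescaling that makes the curvature commutator $p^{-1}R^{L^p\otimes E}$ of order $1$ uniformly in $p$; with the unrescaled connection each commutator would contribute an extra factor of $p$, which would be fatal to the uniform estimate.
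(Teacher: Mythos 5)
The paper states this proposition without proof, remarking only that ``one can easily derive'' it, so there is no paper argument to compare against. Your inductive Leibniz-rule argument, with the observation that the rescaled commutator $\bigl[p^{-1/2}\nabla^{L^p\otimes E}_i,p^{-1/2}\nabla^{L^p\otimes E}_j\bigr]=R^L(e_i,e_j)+p^{-1}R^E(e_i,e_j)$ (together with $R^{TX}$ from reordering on tensor factors) is uniformly bounded under the bounded-geometry hypotheses, is exactly the standard argument the authors have in mind, and it is correct; the reduction of the first claim to the second via the rewriting $a_\ell\cdot(\nabla)^\ell=(p^{\ell/2}a_\ell)\cdot(p^{-1/2}\nabla)^\ell$ for a single fixed $p$ is also sound, if slightly informally phrased.
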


%%%%%%%%%%%%%%%%%%%%%%%%%%%%%%%%%%%%%%%
\subsection{Sobolev embedding theorem}
We will need a refined form of the Sobolev embedding theorem adapted 
to the sequence $L^p\otimes E, p\in \mathbb N$.

\begin{prop}[\cite{ma-ma15}, Lemma 2]\label{p:Sobolev}
For any $k, m\in \mathbb N$ with $m>k+n$, we have an embedding
\begin{equation}\label{e2.16}
H^m(X,L^p\otimes E)\subset \mathscr{C}^k_b(X,L^p\otimes E).
\end{equation}
Moreover, there exists $C_{m,k}>0$ such that, for any $p\in \mathbb N^*$
and $u\in H^m(X,L^p\otimes E)$,
\begin{equation}\label{e2.17}
\|u\|_{\mathscr{C}^k_b}\leq C_{m,k}p^{(n+k)/2}\|u\|_{p,m}.
\end{equation}
\end{prop}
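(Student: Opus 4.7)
The plan is to reduce the estimate to the classical Sobolev embedding on a unit ball in $\mathbb{R}^{2n}$ via the rescaling $Z=\sqrt{p}\,y$ in normal coordinates. First I would fix $x_0\in X$ and introduce geodesic coordinates $y$ on the ball $B(x_0,\varepsilon)\subset X$, where $\varepsilon>0$ is smaller than the (uniform) injectivity radius of $(X,g^{TX})$. Trivializing $L^p\otimes E$ by parallel transport along radial geodesics from $x_0$, the connection $\nabla^{L^p\otimes E}$ takes the form $d+A_p(y)$ with $A_p(0)=0$ and $A_p(y)=O(p|y|)$; the factor $p$ enters because $R^{L^p}=pR^L$. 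Bounded geometry ensures that the metric, the Jacobian of $\exp_{x_0}$, and the coefficients $p^{-1}A_p$, together with all their derivatives, admit bounds uniform in $x_0$.

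Next I would set $S_p(Z):=u\bigl(\exp_{x_0}(Z/\sqrt{p})\bigr)$ on $B(0,\sqrt{p}\,\varepsilon)\subset T_{x_0}X$, viewed as a $(L^p\otimes E)_{x_0}$-valued function via the trivialization. Since $\partial_{y_i}=\sqrt{p}\,\partial_{Z_i}$ and the rescaled connection form $\widetilde A_p(Z):=p^{-1/2}A_p(Z/\sqrt p)$ is $O(|Z|)$ uniformly in $p$, the operator $\frac{1}{\sqrt p}\nabla^{L^p\otimes E}$, when read in the $Z$-variable, becomes $\partial_Z+\widetilde A_p(Z)$ with coefficients uniformly bounded on the unit ball $B(0,1)\subset T_{x_0}X$ for all $p$ large enough (so that $B(0,1)$ lies inside $B(0,\sqrt p\,\varepsilon)$).

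Using $dv_X=p^{-n}(1+O(p^{-1}))\,dZ$ in normal coordinates, I would obtain by induction an inequality of the form
\begin{equation*}
\sum_{j=0}^m\int_{|Z|\leq 1}\bigl|\partial_Z^j S_p(Z)\bigr|^2\,dZ\leq C\,p^{n}\,\|u\|_{p,m}^2,
\end{equation*}
the induction absorbing the lower-order corrections arising from $\widetilde A_p$. The classical Sobolev embedding $H^m(B(0,1))\hookrightarrow \mathscr{C}^k(B(0,1))$, valid for $m>k+n$, then yields for every $\ell\leq k$
\begin{equation*}
\bigl|\partial_Z^\ell S_p(0)\bigr|\leq C_{m,k}\,p^{n/2}\,\|u\|_{p,m}.
\end{equation*}

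Finally, iterated application of the identity $\nabla_{y_i}=\sqrt p\,\partial_{Z_i}+A_p^i(y)$, together with $\widetilde A_p(0)=0$ and the uniform $\mathscr{C}^k_b$-bounds on $\widetilde A_p$, expresses $\nabla^\ell u(x_0)$ as $p^{\ell/2}$ times a polynomial expression in the $\partial_Z^j S_p(0)$ with $j\leq\ell$ and uniformly bounded coefficients. Hence $|\nabla^\ell u(x_0)|\leq C\,p^{(n+\ell)/2}\|u\|_{p,m}$ for every $\ell\leq k$, and taking the supremum over $x_0\in X$ yields \eqref{e2.17} (the case $p=0$ and $p$ small is the standard Sobolev embedding for $E$ on the manifold of bounded geometry $X$). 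The main technical point is the clean bookkeeping of $p$-powers through the rescaling, which succeeds precisely because the $\frac{1}{\sqrt p}$ prefactor in the definition of $\|\cdot\|_{p,m}$ is matched to the growth $R^{L^p}=pR^L$ of the curvature.
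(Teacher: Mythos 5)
Your proof is correct and follows the standard rescaling argument. Since the paper cites this result from Ma--Marinescu \cite[Lemma 2]{ma-ma15} without reproducing the proof, there is no internal proof to compare against; but your argument — passing to normal coordinates with the radial gauge trivialization, rescaling $Z=\sqrt{p}\,y$ so that $\frac{1}{\sqrt p}\nabla^{L^p\otimes E}$ becomes $\partial_Z+\widetilde A_p(Z)$ with $\widetilde A_p$ uniformly $\mathscr{C}^\infty$-bounded and vanishing at $0$, extracting the $p^{n/2}$ from $dv_X\approx p^{-n}\,dZ$ on the unit ball, applying the classical Sobolev embedding, and then recovering the factor $p^{\ell/2}$ for each covariant derivative — is exactly the proof one finds in the cited reference, and the bookkeeping of the $p$-powers is carried out correctly.
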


For any $x\in X$ and $v\in (L^p\otimes E)_x$, we define the delta-section 
$\delta_v\in \mathscr{C}^{-\infty}(X,L^p\otimes E)$ as a linear functional
on $\mathscr{C}^{\infty}_c(X,L^p\otimes E)$ given by
\begin{equation}\label{e2.18}
\langle \delta_v, \varphi\rangle 
=\langle v, \varphi(x)\rangle_{h^{L^p\otimes E}}, 
\quad \varphi \in \mathscr{C}^{\infty}_c(X,L^p\otimes E).
\end{equation}

\begin{prop}\label{p:delta}
For any $m>n$ and $v\in L^p\otimes E$,
$\delta_v\in H^{-m}(X,L^p\otimes E)$ with the following norm estimate
\begin{equation}\label{e2.19}
\sup_{|v|=1}p^{-n/2}\|\delta_v\|_{p,-m}<\infty.
\end{equation}
\end{prop}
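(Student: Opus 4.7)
The plan is to deduce Proposition~\ref{p:delta} directly from the Sobolev embedding result in Proposition~\ref{p:Sobolev}, via the duality definition of the negative-index Sobolev norm. The whole argument is essentially one line once the right inequalities are lined up, so my proposal is mainly to spell out the dualization carefully and track the dependence on $p$.

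First I would unwind the definition. By duality,
\begin{equation*}
\|\delta_v\|_{p,-m}=\sup_{\varphi\in \mathscr{C}^\infty_c(X,L^p\otimes E)\setminus\{0\}}
\frac{|\langle \delta_v,\varphi\rangle|}{\|\varphi\|_{p,m}},
\end{equation*}
and by the definition \eqref{e2.18} of $\delta_v$ together with the pointwise Cauchy--Schwarz inequality for $h^{L^p\otimes E}$, we have
\begin{equation*}
|\langle \delta_v,\varphi\rangle|
=|\langle v,\varphi(x)\rangle_{h^{L^p\otimes E}}|
\leq |v|\cdot|\varphi(x)|\leq |v|\cdot\|\varphi\|_{\mathscr{C}^0_b}.
\end{equation*}

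Next I would apply Proposition~\ref{p:Sobolev} with $k=0$ (which is allowed since $m>n=k+n$): there exists a constant $C_{m,0}>0$, independent of $p$, such that for every $\varphi\in H^m(X,L^p\otimes E)$,
\begin{equation*}
\|\varphi\|_{\mathscr{C}^0_b}\leq C_{m,0}\,p^{n/2}\,\|\varphi\|_{p,m}.
\end{equation*}
Combining this with the preceding estimate gives
\begin{equation*}
|\langle \delta_v,\varphi\rangle|\leq C_{m,0}\,p^{n/2}\,|v|\cdot\|\varphi\|_{p,m},
\end{equation*}
which shows that the linear functional $\delta_v$ extends continuously to $H^m(X,L^p\otimes E)$, so that $\delta_v\in H^{-m}(X,L^p\otimes E)$, with
\begin{equation*}
\|\delta_v\|_{p,-m}\leq C_{m,0}\,p^{n/2}\,|v|.
\end{equation*}
Taking the supremum over $v$ with $|v|=1$ and multiplying by $p^{-n/2}$ yields \eqref{e2.19}.

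There is no real obstacle: the only ingredient beyond bookkeeping is the $p$-dependence $p^{n/2}$ in the Sobolev embedding of Proposition~\ref{p:Sobolev}, and that is precisely what produces the factor $p^{n/2}$ in the dual norm estimate. The mild subtlety worth flagging is that the Sobolev norm $\|\cdot\|_{p,m}$ is defined using the \emph{rescaled} covariant derivative $\tfrac{1}{\sqrt{p}}\nabla^{L^p\otimes E}$, which is exactly what makes the embedding constant grow as $p^{n/2}$ rather than as some higher power of $p$; everything else is routine duality.
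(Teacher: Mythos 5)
Your proposal is correct and follows exactly the same approach as the paper's (very terse) proof: dualize, bound $|\langle\delta_v,\varphi\rangle|$ by $|v|\,\|\varphi\|_{\mathscr{C}^0_b}$, and invoke Proposition~\ref{p:Sobolev} with $k=0$ to obtain the $p^{n/2}$ factor. You simply spell out the steps that the paper compresses into one displayed inequality.
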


\begin{proof}
By Proposition~\ref{p:Sobolev} and the definition of the Sobolev norm, 
we have
\begin{equation}\label{e2.20}
\|\delta_v\|_{p,-m}\leq C\sup_{\phi\in H^m(X,L^p\otimes E)}
\frac{\langle \delta_v,\phi\rangle}{\|\phi\|_{p,m}}\leq Cp^{n/2}|v|.
\qedhere
\end{equation}
\end{proof}

%%%%%%%%%%%%%%%%%%%%%%%%%%%%%%%%%%%
\subsection{The renormalized Bochner-Laplacian}
Let us first note the following basic result.
%--------
\begin{thm}\label{esa}
Let $(X,\omega)$ be a symplectic manifold with a prequantum line bundle
$(L,\nabla^L,h^L)$. Let $g^{TX}$ be a complete
Riemannian metric on $X$ and let
$(E,\nabla^E, h^E)$ be an auxiliary vector bundle.
% Let $\nabla^{L^p\otimes E}$ be be the connection
% on $L^p\otimes E$ induced by $\nabla^{L}$ and $\nabla^E$ and let
% $\big(\nabla^{L^p\otimes E}\big)^{\!*}$ its formal adjoint.

\noindent
(i) 
%Let us denote by the same symbols $\nabla^{L^p\otimes E}$ and 
%$\Big(\nabla^{L^p\otimes E}\Big)^*$
%the maximal extensions of the differential operators
%$\nabla^{L^p\otimes E}$ and $\Big(\nabla^{L^p\otimes E}\Big)^*$
%in the $L^2$ spaces. 
The space $\mathscr{C}^\infty_c(X,L^p\otimes E)$ is dense in the graph
norm of the maximal extension of $\nabla^{L^p\otimes E}$ and 
$\mathscr{C}^\infty_c(X,T^*X\otimes L^p\otimes E)$
is dense in the graph norm of the maximal extension of
$\big(\nabla^{L^p\otimes E}\big)^{\!*}$.

\noindent
(ii) The Hilbert space adjoint of the maximal extension
of $\nabla^{L^p\otimes E}$ coincides with the maximal extension
of $\big(\nabla^{L^p\otimes E}\big)^{\!*}$.

\noindent
(iii) The Bochner-Laplacian 
$\Delta^{L^p\otimes E}
=\big(\nabla^{L^p\otimes E}\big)^{\!*}\,\nabla^{L^p\otimes E}$
acting on $\mathscr{C}^\infty_c(X,L^p\otimes E)$ is essentially selfadjoint.
In particular, the renormalized Bochner-Laplacian
$\Delta_p$ acting on $\mathscr{C}^\infty_c(X,L^p\otimes E)$
is essentially selfadjoint. 
\end{thm}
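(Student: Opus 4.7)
The plan rests on the classical fact that on a complete $(X,g^{TX})$ there exist smooth compactly supported cutoffs $\chi_k\in \mathscr{C}^\infty_c(X)$ with $0\le\chi_k\le 1$, $\chi_k\equiv 1$ on the geodesic ball $B(x_0,k)$, $\supp\chi_k\subset B(x_0,k+1)$, and both $|d\chi_k|_{g^{TX}}$ and $|\Delta\chi_k|$ uniformly bounded in $k$. These are constructed by smoothing the $1$-Lipschitz function $x\mapsto d(x,x_0)$. Combined with Friedrichs mollification in local trivializations and a partition of unity, they yield the standard two-step scheme: first cut off to reduce to compactly supported $L^2$-sections, then mollify to reach $\mathscr{C}^\infty_c$.

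For (i), I would take $u\in\Dom(\nabla^{L^p\otimes E}_{\max})$ (the maximal, i.e.\ distributional, extension) and consider $\chi_k u$. By the Leibniz rule,
\begin{equation*}
\nabla^{L^p\otimes E}(\chi_k u)=d\chi_k\otimes u+\chi_k\,\nabla^{L^p\otimes E}u,
\end{equation*}
and each of $\chi_k u\to u$ and $\chi_k\nabla^{L^p\otimes E}u\to \nabla^{L^p\otimes E}u$ converges in $L^2$ by dominated convergence; the term $d\chi_k\otimes u$ has uniformly bounded norm and is supported in an annulus escaping to infinity, hence also tends to $0$ in $L^2$. A subsequent mollification gives the required sequence in $\mathscr{C}^\infty_c(X,L^p\otimes E)$ converging to $u$ in the graph norm of $\nabla^{L^p\otimes E}_{\max}$. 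The argument for $(\nabla^{L^p\otimes E})^{\!*}$ is identical.

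Part (ii) then follows from the standard functional-analytic identity: for any formal adjoint pair $(T,T^*)$ of differential operators, the Hilbert-space adjoint of the closure of $T|_{\mathscr{C}^\infty_c}$ equals $(T^*)_{\max}$. Since (i) identifies the minimal and maximal extensions of both $\nabla^{L^p\otimes E}$ and $(\nabla^{L^p\otimes E})^{\!*}$, one has
\begin{equation*}
\bigl(\nabla^{L^p\otimes E}_{\max}\bigr)^{\!*}=\bigl((\nabla^{L^p\otimes E})^{\!*}\bigr)_{\max},
\end{equation*}
which is exactly the content of (ii).

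For (iii), set $D:=\nabla^{L^p\otimes E}_{\max}$. By (ii) and von Neumann's theorem, $D^*D$ is self-adjoint with natural domain $\{u\in\Dom(D):Du\in\Dom(D^*)\}$, and it plainly extends $\Delta^{L^p\otimes E}\!\upharpoonright\!\mathscr{C}^\infty_c$. Essential self-adjointness then reduces to showing that $\mathscr{C}^\infty_c$ is a core for $D^*D$. The Leibniz-type identity
\begin{equation*}
\Delta^{L^p\otimes E}(\chi_k u)=\chi_k\,\Delta^{L^p\otimes E}u+(\Delta\chi_k)\,u-2\,\nabla^{L^p\otimes E}_{\nabla\chi_k}u
\end{equation*}
shows that $\Delta^{L^p\otimes E}(\chi_k u)\to\Delta^{L^p\otimes E}u$ in $L^2$, using $u,Du\in L^2$, the uniform bounds on $|\nabla\chi_k|$ and $|\Delta\chi_k|$, and the fact that both are supported in annuli escaping to infinity. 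Mollification then produces $\mathscr{C}^\infty_c$-approximants with convergent graph norms. The one non-formal ingredient, and hence the main potential obstacle, is the construction of cutoffs $\chi_k$ with both $|\nabla\chi_k|$ and $|\Delta\chi_k|$ uniformly bounded on a merely complete manifold; this is classical (going back to Gaffney) but is the only step that is not purely functional-analytic manipulation. Finally, since the bounded geometry assumption ensures $\tau$ in \eqref{e:Delta_p} is bounded, $-p\tau$ is a bounded self-adjoint multiplication and essential self-adjointness transfers from $\Delta^{L^p\otimes E}$ to $\Delta_p$ by Kato--Rellich.
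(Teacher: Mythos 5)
For (i) and (ii) your argument is correct and is precisely the paper's route: the paper refers to the Andreotti--Vesentini Lemma \cite[Lemma 3.3.1]{MM07} and to \cite[Corollary 3.3.3]{MM07}, which are exactly the cutoff-plus-mollification density scheme and the adjoint identity you describe, and these only use cutoffs $\chi_k$ with $|d\chi_k|$ uniformly bounded --- available on any complete manifold by smoothing the $1$-Lipschitz distance function.

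For (iii) you take a genuinely different route, and it contains a gap in the generality claimed. You try to show directly that $\mathscr{C}^\infty_c$ is a core for $D^*D$, and the Leibniz identity then forces you to control the term $(\Delta\chi_k)u$, i.e.\ you need $|\Delta\chi_k|$ uniformly bounded in $k$. That is \emph{not} available on a merely complete manifold: smoothing the $1$-Lipschitz distance function controls only the first derivative of the cutoff, whereas a uniform bound on $\Delta\chi_k$ requires Hessian control on the (smoothed) distance function, hence curvature bounds absent from the hypotheses of the theorem as stated. This is precisely the step you flag as ``classical (going back to Gaffney)'', but Gaffney's construction gives bounded $|d\chi_k|$, not bounded $|\Delta\chi_k|$; there exist complete metrics for which no cutoffs with uniformly bounded Laplacian exist. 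The paper's route via \cite[Corollary 3.3.4]{MM07} avoids this entirely: since $\Delta^{L^p\otimes E}\geq 0$ on $\mathscr{C}^\infty_c$, essential self-adjointness is equivalent to $\ker\big(\Delta_{\max}+1\big)=\{0\}$; if $u\in L^2$ solves $\Delta^{L^p\otimes E}u=-u$ distributionally, then $u$ is smooth by ellipticity, and the cutoff integration by parts
\begin{equation*}
\big\|\chi_k\nabla^{L^p\otimes E}u\big\|^2
=\big\langle\chi_k^2\,\Delta^{L^p\otimes E}u,u\big\rangle
-2\big\langle\chi_k\,d\chi_k\otimes u,\nabla^{L^p\otimes E}u\big\rangle,
\end{equation*}
combined with Cauchy--Schwarz and the first-order bound $\sup_k\|d\chi_k\|_\infty<\infty$, yields $\nabla^{L^p\otimes E}u\in L^2$ and $\langle\Delta^{L^p\otimes E}u,u\rangle=\|\nabla^{L^p\otimes E}u\|^2\geq 0$, whence $-\|u\|^2\geq 0$ and $u=0$. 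This uses only first-order cutoff bounds, hence only completeness. Your argument does go through under the paper's standing bounded-geometry hypothesis, which supplies cutoffs with all derivatives bounded (e.g.\ via the smoothed-distance construction of \cite{Kor91} exploited in Section~\ref{maintheorem}), but you should either switch to the Gaffney-type argument above or explicitly restrict the claim to the bounded-geometry setting.
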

%--------
\begin{proof}
Assertion (i) is a form of 
the Andreotti-Vesentini Lemma \cite[Lemma 3.3.1]{MM07}.
The proof is obtained 
by replacing $\overline\partial^E$ in \cite[Lemma 3.3.1]{MM07} 
with $\nabla^{L^p\otimes E}$.
Assertions (ii) and (iii) are obtained by adapting in the same way the
proofs of \cite[Corollary 3.3.3]{MM07} and \cite[Corollary 3.3.4]{MM07},
respectively (valid for
$\big(\overline\partial^E\big)^*$ and the Kodaira-Laplacian $\Box^E$).
\end{proof}
%--------
Now we establish some additional properties of the 
family $\{\frac{1}{p}\Delta_p, p\in \mathbb N\}$ of differential operators,
which is bounded in $p$.

\begin{thm}\label{uniformD}
There exist $C_2, C_3>0$ such that for any $p\in\mathbb N^*$,
$u, u^\prime\in \mathscr{C}^\infty_c(X,L^p\otimes E)$,
\begin{equation}\label{e:deltap1}
\Big\langle \frac{1}{p}\Delta_p u, u\Big\rangle_{p,0}
\geq \|u\|^2_{p,1}-C_2\|u\|^2_{p,0}\,,
\end{equation}
\begin{equation}\label{e:deltap2}
\left|\Big\langle \frac{1}{p}\Delta_p u, u^\prime\Big\rangle_{p,0}\right|
\leq C_3\|u\|_{p,1}\|u^\prime\|_{p,1}\,.
\end{equation}
\end{thm}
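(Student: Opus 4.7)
The plan is to expand $\frac{1}{p}\Delta_p$ using its definition \eqref{e:Delta_p} and rewrite everything in terms of the rescaled gradient $\frac{1}{\sqrt{p}}\nabla^{L^p\otimes E}$, whose $L^2$-norm squared is precisely $\|u\|_{p,1}^2 - \|u\|_{p,0}^2$ by \eqref{e2.6}. Once this identification is made, both inequalities become essentially bookkeeping: the only nontrivial geometric input is that $\tau\in\mathscr{C}^\infty(X)$ is bounded, which follows from the bounded geometry of $J_0, J$ via \eqref{tau}.

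More precisely, I would first observe that by Theorem~\ref{esa} (and the density statement therein) it suffices to work with $u,u'\in \mathscr{C}^\infty_c(X,L^p\otimes E)$, so that integration by parts is legitimate. Then I would compute
\begin{equation*}
\Big\langle \frac{1}{p}\Delta_p u, u'\Big\rangle_{p,0}
= \Big\langle \frac{1}{\sqrt{p}}\nabla^{L^p\otimes E} u,
\frac{1}{\sqrt{p}}\nabla^{L^p\otimes E} u'\Big\rangle_{p,0}
- \langle \tau u, u'\rangle_{p,0},
\end{equation*}
using \eqref{e1.4} and \eqref{e:Delta_p}. Setting $u'=u$ and invoking \eqref{e2.6} yields
\begin{equation*}
\Big\langle \frac{1}{p}\Delta_p u, u\Big\rangle_{p,0}
= \|u\|_{p,1}^2 - \|u\|_{p,0}^2 - \langle \tau u, u\rangle_{p,0},
\end{equation*}
and since $\|\tau\|_{L^\infty(X)}<\infty$ by bounded geometry, the first inequality follows with $C_2 := 1+\|\tau\|_{L^\infty}$.

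For the second inequality, I would apply Cauchy-Schwarz to both terms of the polarized identity. The gradient term is bounded by $\|u\|_{p,1}\|u'\|_{p,1}$ directly from \eqref{e2.6}, and the potential term by $\|\tau\|_{L^\infty}\|u\|_{p,0}\|u'\|_{p,0}\leq \|\tau\|_{L^\infty}\|u\|_{p,1}\|u'\|_{p,1}$. Taking $C_3:=1+\|\tau\|_{L^\infty}$ gives the bound. Since both constants are independent of $p$, we are done.

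There is no real obstacle here; the only point that deserves attention is making sure the $\tau$-dependent constants do not absorb any power of $p$, which is guaranteed by the normalization in \eqref{e:Delta_p} that puts the factor of $p$ exactly in front of $\tau$, and by the fact that $\tau$ is $p$-independent and bounded on $X$.
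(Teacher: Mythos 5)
Your proof is correct and follows essentially the same route as the paper: the paper simply writes down the identity $\langle \tfrac1p\Delta_p u,u\rangle_{p,0}=\|\tfrac{1}{\sqrt p}\nabla^{L^p\otimes E}u\|^2_{p,0}-\langle\tau u,u\rangle_{p,0}$ (the polarized form of which you derive) and notes that the estimates follow immediately; your spelling out of $\|\tfrac{1}{\sqrt p}\nabla^{L^p\otimes E}u\|^2_{p,0}=\|u\|^2_{p,1}-\|u\|^2_{p,0}$, the boundedness of $\tau$, and Cauchy--Schwarz is exactly what is being left implicit. The only superfluous remark is the appeal to density via Theorem~\ref{esa}: the statement already restricts to $u,u'\in\mathscr{C}^\infty_c(X,L^p\otimes E)$, so integration by parts is immediate.
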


\begin{proof}
These estimates follow immediately from the identity
\begin{equation}\label{e2.9}
\Big\langle \frac{1}{p}\Delta_p u, u\Big\rangle_{p,0}
=\left\|\frac{1}{\sqrt{p}}\nabla^{L^p\otimes E}u\right\|^2_{p,0}
-\langle \tau u, u\rangle_{p,0}. \qedhere
\end{equation}
\end{proof}

Let $\delta$ be the counterclockwise oriented circle in $\mathbb C$ 
centered at $0$ of radius $\mu_0$.

\begin{thm}\label{Thm1.7}
There exists $p_0\in \mathbb N$ such that for any $\lambda\in \delta$ 
and $p\geq p_0$ the operator $\lambda-\frac{1}{p}\Delta_p$ is invertible 
in $L^2(X,L^p\otimes E)$, and there exists $C>0$ such that for
all $\lambda\in \delta$ and $p\geq p_0$ we have
\begin{equation}\label{e:Thm1.7}
\left\|\Big(\lambda-\frac{1}{p}\Delta_p\Big)^{-1}\right\|^{0,0}_p\leq C,
\quad
\left\|\Big(\lambda-\frac{1}{p}\Delta_p\Big)^{-1}\right\|^{-1,1}_p\leq C.
\end{equation}
\end{thm}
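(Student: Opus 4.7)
My plan is to combine the spectral gap of Theorem~\ref{t:gap0} (for the $L^2$-bound) with the coercivity of Theorem~\ref{uniformD} and functional calculus (for the $H^{-1}\to H^1$ bound). Rescaling \eqref{e1.8} by $p$ places $\sigma(\frac{1}{p}\Delta_p)$ inside $[-C_L/p,C_L/p]\cup[2\mu_0-C_L/p,+\infty)$, so if we pick $p_0\in\mathbb{N}$ with $C_L/p_0\le\mu_0/2$, then for all $p\ge p_0$ the circle $\delta$ stays at distance at least $\mu_0/2$ from the spectrum. Since $\frac{1}{p}\Delta_p$ is self-adjoint by Theorem~\ref{esa}, this already yields invertibility of $\lambda-\frac{1}{p}\Delta_p$ on $L^2$ together with the uniform estimate $\|(\lambda-\frac{1}{p}\Delta_p)^{-1}\|^{0,0}_p\le 2/\mu_0$.

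For the $H^{-1}\to H^1$ bound I would introduce the self-adjoint operator $B_p:=\bigl(\frac{1}{p}\Delta_p+C_2+1\bigr)^{1/2}$ defined by functional calculus. By \eqref{e:deltap1} one has $B_p\ge I$, and combining \eqref{e:deltap1} with \eqref{e:deltap2} gives the $p$-independent two-sided estimate
\begin{equation*}
\|u\|_{p,1}^2\ \le\ \|B_p u\|_{p,0}^2\ \le\ (C_3+C_2+1)\,\|u\|_{p,1}^2
\end{equation*}
for $u\in\mathscr{C}^\infty_c(X,L^p\otimes E)$. Hence $B_p$ extends to an isomorphism $H^1\to L^2$ with uniform bounds, and dualising via the $L^2$-pairing shows that $B_p^{-1}$ extends to an isomorphism $H^{-1}\to L^2$ with $\|B_p^{-1}u\|_{p,0}$ uniformly equivalent to $\|u\|_{p,-1}$. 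The crucial observation is then that $R_\lambda:=(\lambda-\frac{1}{p}\Delta_p)^{-1}$ and $B_p$ are Borel functions of the same self-adjoint operator and therefore commute; together with $R_\lambda\cdot\frac{1}{p}\Delta_p=\lambda R_\lambda-I$, this delivers the identity
\begin{equation*}
B_p R_\lambda B_p\ =\ R_\lambda B_p^2\ =\ (\lambda+C_2+1)R_\lambda\ -\ I,
\end{equation*}
valid on $\Dom(B_p^2)=H^2$ and extending to a bounded operator on $L^2$ of norm at most $(\mu_0+C_2+1)(2/\mu_0)+1$. Writing an arbitrary $u\in L^2$ as $u=B_p(B_p^{-1}u)$, the chain $\|R_\lambda u\|_{p,1}\sim\|B_p R_\lambda u\|_{p,0}=\|B_p R_\lambda B_p(B_p^{-1}u)\|_{p,0}\lesssim\|B_p^{-1}u\|_{p,0}\sim\|u\|_{p,-1}$ produces the desired bound, and density of $L^2$ in $H^{-1}$ finishes the extension.

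The main obstacle is the uniformity in $p$ of the norm equivalence $\|B_p u\|_{p,0}\sim\|u\|_{p,1}$ and its dual: this is precisely what Theorem~\ref{uniformD} supplies, since the constants $C_2,C_3$ there are $p$-independent. Without such uniform control the functional-calculus shortcut would generate $p$-dependent constants and the argument would collapse; granted this uniformity, the rest of the proof reduces to the short spectral-calculus identity displayed above together with the elementary resolvent bound coming from the spectral gap.
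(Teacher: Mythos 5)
Your proof is correct, and it takes a genuinely different route from the paper. The paper's argument is purely algebraic on the resolvent level: starting from the coercivity bound \eqref{e:deltap1}, it picks a reference point $\lambda_0\le -C_2$ so that $(\lambda_0-\tfrac1p\Delta_p)^{-1}$ is manifestly a uniform $H^{-1}\to H^1$ contraction, and then bootstraps to $\lambda\in\delta$ by applying the resolvent identity \eqref{e:Thm1.72} twice (once to gain $\|\cdot\|^{-1,0}_p$, then with the two right-hand factors swapped to gain $\|\cdot\|^{-1,1}_p$). You instead introduce the self-adjoint square root $B_p=(\tfrac1p\Delta_p+C_2+1)^{1/2}$, use \eqref{e:deltap1}--\eqref{e:deltap2} to show that $B_p:H^1\to L^2$ is a uniform isomorphism (with the $H^{-1}\to L^2$ statement following by duality), and then exploit the commutation of $B_p$ with $R_\lambda$ to reduce the $H^{-1}\to H^1$ bound to the elementary $L^2$ identity $B_pR_\lambda B_p=(\lambda+C_2+1)R_\lambda-I$. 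Both arguments use exactly the same inputs (Theorem~\ref{t:gap0}, Theorem~\ref{uniformD}, essential self-adjointness from Theorem~\ref{esa}); the trade-off is that the paper stays within elementary resolvent algebra and never leaves the explicit Sobolev scale, whereas your argument is shorter conceptually but leans more on the spectral theorem for unbounded operators, in particular on identifying $\Dom(B_p)$ (the form domain of the closure of $\tfrac1p\Delta_p|_{\mathscr{C}^\infty_c}$) with $H^1(X,L^p\otimes E)$ up to uniform norm equivalence. That identification is legitimate here because $\mathscr{C}^\infty_c$ is a core and the form norm is uniformly comparable to $\|\cdot\|_{p,1}$ on it, but it is worth stating explicitly rather than leaving implicit, since it is the only place where essential self-adjointness is really used in your route.
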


\begin{proof}
We will closely follow the proof of 
\cite[Theorem 4.8]{DLM04a} or \cite[Theorem 1.7]{ma-ma08} 
(cf.\ also the proof of \cite[Theorem 11.27]{BL}).
The first estimate follows from Theorem~\ref{t:gap0} and
the spectral theorem. By \eqref{e:deltap1}, we have, for 
$\lambda_0\leq -C_2$,
\begin{equation}\label{e2.11}
\Big\langle \Big(\,\frac{1}{p}\Delta_p-\lambda_0\Big)u, 
u\Big\rangle_{p,0}\geq \|u\|^2_{p,1},
\end{equation}
therefore, the resolvent $\Big(\lambda_0-\frac{1}{p}\Delta_p\Big)^{-1}$
exists and
\begin{equation}\label{e:Thm1.71}
\left\|\Big(\lambda_0-\frac{1}{p}\Delta_p\Big)^{-1}\right\|^{-1,1}_p
\leq  1.   %\frac{1}{C_1}.
\end{equation}
Now we can write, for $\lambda\in \delta$ and $\lambda_0\leq -C_2$,
\begin{equation}\label{e:Thm1.72}
\Big(\lambda-\frac{1}{p}\Delta_p\Big)^{-1}
=\Big(\lambda_0-\frac{1}{p}\Delta_p\Big)^{-1}
-(\lambda-\lambda_0)\Big(\lambda-\frac{1}{p}\Delta_p\Big)^{-1}
\Big(\lambda_0-\frac{1}{p}\Delta_p\Big)^{-1}.
\end{equation}
Thus for $\lambda\in \delta$, we get from 
the first estimate of \eqref{e:Thm1.7}, 
\eqref{e:Thm1.71} and \eqref{e:Thm1.72}, 
\begin{equation}\label{e:Thm1.73}
\left\|\Big(\lambda-\frac{1}{p}\Delta_p\Big)^{-1}\right\|^{-1,0}_p
\leq   1+C|\lambda-\lambda_0|.
\end{equation}
Changing the last two factors in \eqref{e:Thm1.72} and applying
\eqref{e:Thm1.73}, we get
\begin{equation}\label{e2.15}
\left\|\Big(\lambda-\frac{1}{p}\Delta_p\Big)^{-1}\right\|^{-1,1}_p
\leq 1+|\lambda-\lambda_0|
(1+C|\lambda-\lambda_0|).
\end{equation}
The proof of Theorem \ref{Thm1.7} is completed.
\end{proof}

%%%%%%%%%%%%%%%%%%%%%%%%%%%%%%%%%%%%%%%
%%%%%%%%%%%%%%%%%%%%%%%%%%%%%%%%%%%%%%%
\section{Proof of main results}\label{maintheorem}
%{Proof of Theorems~\ref{t:mainPp} and 
%\ref{t:covering}}\label{maintheorem}

This section is devoted to the proofs of Theorems~\ref{t:mainPp} and
\ref{t:covering}. First, we describe a class of exponential weight functions 
as in \cite{Kor91}. Then we prove norm estimates in weighted Sobolev 
spaces for the resolvent $\big(\lambda-\frac{1}{p}\Delta_p\big)^{-m}$.
%$\Big(\lambda-\frac{1}{p}\Delta_p\Big)^{-m}$.
Here we follow general constructions of \cite{DLM04a,MM07,ma-ma08}, 
which are inspired by the analytic localization technique of 
Bismut-Lebeau \cite[\S 11]{BL}.
Next, we derive pointwise exponential estimates for the 
Schwartz kernel of the operator 
$\big(\lambda-\frac{1}{p}\Delta_p\big)^{-m}$ 
and its derivatives of an arbitrary order, using a refined form of 
the Sobolev embedding theorem stated in Proposition~\ref{p:Sobolev}. 
Finally, we use the formula as in \cite[(1.55)]{ma-ma08}
\begin{equation}\label{bergman-integral}
P_{\mathcal H_p}=\frac{1}{2\pi i}
\int_\delta \lambda^{m-1}
\Big(\lambda-\frac{1}{p}\Delta_p\Big)^{-m}d\lambda, \quad m\geq 1,
\end{equation}
that allows us to complete the proofs of Theorems~\ref{t:mainPp} 
and \ref{t:covering}.

%%%%%%%%%%%%%%%%%%%%%%%%%%%%%%%%%%%%%
\subsection{Weight functions}
Recall that $d$ denotes the distance function on $X$.
By \cite[Proposition 4,1]{Kor91}, there exists a ``smoothed  distance''  
function  $\widetilde{d}\in \mathscr{C}^\infty(X\times X)$, satisfying 
the following conditions:
\medskip\par
(1) there is $r>0$ such that
\begin{equation}\label{e:3.11}
\big\vert \widetilde{d}(x,y) - d (x,y)\big\vert  < r,\quad x, y\in X;
\end{equation}

(2) for any $k>0$, there exists $C_k>0$ such that, for any multi-index 
$\beta$ with $|\beta|=k$,
\begin{equation}\label{e:3.12}
\big\vert \partial^\beta_{x}\widetilde{d}(x,y)\big\vert < C_{k},\quad
x, y\in X,
\end{equation}
where the derivatives are taken with respect to normal coordinates 
defined by the exponential map at $x$.

Actually, we will work with a sequence of smoothed distance functions 
$\widetilde d_p$, $p\in\mathbb N$, to remove small distances effects of
smoothing. As one can easily see from the proof of 
\cite[Proposition 4.1]{Kor91}, for any $\gamma\in (0,1]$,
there exists a function $\widetilde{d}\in \mathscr{C}^\infty(X\times X)$, 
satisfying \eqref{e:3.11} with $r=\gamma$ and \eqref{e:3.12} with 
$C_k=c_k \gamma^{1-k}$, $c_k>0$ is independent of $\gamma$. 
Let us briefly describe its construction.

Let $a^X$ be the injectivity radius of $(X,g^{TX})$.
We denote by $B^{X}(x,r)$ 
and $B^{T_{x}X}(0,r)$ the open balls in $X$ and $T_xX$ with center $x$ 
and radius $r$, respectively. For any $x_0\in X$, we identify 
$B^{T_{x_0}X}(0,a^X)$ with $B^{X}(x_0,a^X)$ via the exponential map 
$\exp^X_{x_0} : T_{x_0}X\to X$. One can show that, 
for $\varepsilon\in (0,a^X)$, the geodesic distance on 
$B^{X}(x_0,\varepsilon)$ is equivalent to the Euclidean distance on 
$B^{T_{x_0}X}(0,\varepsilon)$ uniformly on $x_0\in X$: there exists 
$C>0$ such that for any $x_0\in X$ and
$Z,W\in B^{T_{x_0}X}(0,\varepsilon)$,
\begin{equation}\label{e:equiv-dist}
C^{-1}d^{T_{x_0}X}(Z,W) \leq d \left(\exp^X_{x_0}(Z), 
\exp^X_{x_0}(W)\right)\leq Cd^{T_{x_0}X}(Z,W).
\end{equation}
By \cite[Lemma 2.3]{Kor91}, for $\varepsilon <a^X/2$,
there exists a covering $\{B^X(x_j, \varepsilon)\}_{j\in \NN}$
of $X$ and $N\in \mathbb N$ such that every intersection of $N+1$
balls  $B^X(x_j, 2\varepsilon)$ is empty. Furthermore, 
by \cite[Lemma 2.4]{Kor91}, there exists a partition of unity 
$\sum_{j=1}^\infty\phi_j = 1$ subordinated to this covering such that 
${\rm supp}\ \phi_j\subset B^X(x_j, \varepsilon)$ for any $j$ and, 
for any $k\in \mathbb N$, there exists $C_k$ such that 
$|\partial^\alpha \phi_j(x)|<C_k$ for any $j$, $x\in  B^X(x_j, \varepsilon)$
and $|\alpha|<k$, where the derivatives are computed with respect to 
the normal coordinates on $B^X(x_j, \varepsilon)$. Choose a function 
$\theta\in \mathscr{C}^\infty_c(\mathbb R^{2n})$ 
such that $\theta(x) = 0$
if $|x|>1$, $\theta(x) \geq 0$ for any $x\in \mathbb R^{2n}$ and 
$\int_{\mathbb R^{2n}} \theta(x)\,dx= 1$. For any $\delta>0$, put
\begin{equation}\label{e:3.14}
\theta_\delta(x)=\delta^{-2n}\theta(\delta^{-1}x), 
\quad x\in \mathbb R^{2n}.
\end{equation}
The function $\widetilde d$ is defined by
\begin{equation}\label{e:3.13}
\widetilde{d}(x,y)=\sum_{j=1}^\infty \phi_j(x) \int_{\mathbb R^{2n}} 
\theta_\delta\big((\exp^X_{x_j})^{-1}(x)-z\big)
d\big(\exp^X_{x_j}(z),y\big)\,dz.
\end{equation}
Using the formula
\begin{equation}\label{e:3.7}
d(x,y)=\sum_{j=1}^\infty \phi_j(x) \int_{\mathbb R^{2n}} 
\theta_\delta((\exp^X_{x_j})^{-1}(x)-z)d(x,y)\,dz,
\end{equation}
the triangle inequality, the fact that ${\rm supp}\,
\theta_\delta\subset B(0,\delta)$ and \eqref{e:equiv-dist}, we get
\begin{equation}\label{e:3.8}
\big|\widetilde{d}(x,y)-d(x,y)\big| \leq \sum_{j=1}^\infty \phi_j(x) 
\int_{\mathbb R^{2n}} \theta_\delta((\exp^X_{x_j})^{-1}(x)-z)
d(x,\exp^X_{x_j}(z))\,dz\leq C\delta.
\end{equation}

Choosing $\delta<C^{-1}\gamma$, we obtain \eqref{e:3.11} 
with $r=\gamma$. 
%The second property of $\widetilde{d}$ is proved by differentiating the formulas \eqref{e:3.13} and \eqref{e:3.7} with respect to $x$ and using \eqref{e:3.14}.

Differentiating \eqref{e:3.13} with respect to $x$, for any multi-index $\beta$ with $|\beta|=k$, we get
\[
\partial^\beta_{x}\widetilde{d}(x,y)= \sum_{\tau\leq \beta} C_{\beta\tau} \sum_{j=1}^\infty \partial^\tau_{x} \phi_j(x) \int_{\mathbb R^{2n}} \partial^{\beta-\tau}_{x}[\theta_\delta\big((\exp^X_{x_j})^{-1}(x)-z\big)]
d\big(\exp^X_{x_j}(z),y\big)\,dz
\]
with some constants $C_{\beta\tau}>0$. Taking into account that $\sum_{j=1}^\infty\phi_j = 1$ and $\int_{\mathbb R^{2n}} \theta_\delta(x)\,dx= 1$, it is easy to see that, for $k>0$,
\[
\sum_{\tau\leq \beta} C_{\beta\tau}\sum_{j=1}^\infty \partial^\tau_{x} \phi_j(x) \int_{\mathbb R^{2n}} \partial^{\beta-\tau}_{x}[\theta_\delta\big((\exp^X_{x_j})^{-1}(x)-z\big)]\,dz=0. 
\]
As above, using these formulas, the triangle inequality, the fact that ${\rm supp}\,
\theta_\delta\subset B(0,\delta)$ and \eqref{e:equiv-dist}, for any multi-index $\beta$ with $|\beta|=k>0$, we infer
\[
|\partial^\beta_{x}\widetilde{d}(x,y)|<\sum_{\tau\leq \beta} C_{\beta\tau}\sum_{j=1}^\infty \left|\partial^\tau_{x} \phi_j(x)\right| \int_{\mathbb R^{2n}} \left|\partial^{\beta-\tau}_{x}[\theta_\delta\big((\exp^X_{x_j})^{-1}(x)-z\big)]\right|
d\big(x,\exp^X_{x_j}(z)\big)\,dz\leq C_\beta\delta^{1-k}.
\]
For $\delta$ chosen as above, this gives \eqref{e:3.12} with $C_k=c_k \gamma^{1-k}$, $c_k>0$ is independent of $\gamma$. 

We will use such a function $\widetilde{d}$ for 
$\gamma=\frac{1}{\sqrt{p}}$, $p\in \mathbb N^{*}$, 
denoting it by $\widetilde d_p$. So it satisfies the conditions:

(1) we have
\begin{equation}\label{(1.1)}
\big\vert \widetilde{d}_p(x,y) - d (x,y)\big\vert  < \frac{1}{\sqrt{p}}\;,
\quad   \text{  for any } x, y\in X;
\end{equation}

(2) for any $k>0$, there exists $c_k>0$ such that, for any multi-index 
$\beta$ with $|\beta|= k$,
\begin{equation}
\label{dist}
\left| \Big(\frac{1}{\sqrt{p}}\Big)^{k} \partial^\beta_{x}
\widetilde{d}_p(x,y)\right| < \frac{c_{k}}{\sqrt{p}}\:,\quad 
\text{  for any } x, y\in X.
\end{equation}
For any $\alpha\in \mathbb R$, $p\in \mathbb N^{*}$ and $y\in X$, 
we introduce a weight function 
$f_{\alpha,p,y} \in \mathscr{C}^{\infty}(X)$ by
\begin{equation}\label{e:3.9}
f_{\alpha,p,y}(x) = e^{\alpha \widetilde{d}_{p,y}(x)},\quad 
\text{  for } x \in X,
\end{equation}
where $\widetilde{d}_{p,y}$ is a smooth function on $X$ given by
\begin{equation}\label{e:3.10}
\widetilde{d}_{p,y}(x) = \widetilde{d}_p(x,y), \quad   \text{  for } x\in X.
\end{equation}
We don't introduce explicitly the weighted Sobolev spaces associated 
with $f_{\alpha,p,y}$. Instead, we will work with the operator families
for $p\in \mathbb N^{*},
\alpha\in \mathbb R, y\in X$,
\begin{align}\label{e:3.15}
A_{p;\alpha,y}=f_{\alpha,p,y}A_pf^{-1}_{\alpha,p,y}: 
\mathscr{C}^\infty_c(X,L^p\otimes E)\to
\mathscr{C}^{-\infty}(X,L^p\otimes E)
\end{align}
defined by an operator family 
$\{A_p: \mathscr{C}^\infty_c(X,L^p\otimes E)\to
\mathscr{C}^{-\infty}(X,L^p\otimes E), p\in \mathbb N^{*}\}$. 
In particular, the desired exponential estimate of the Bergman kernel 
will be derived from the fact that the operator
$f_{\alpha,p,y}P_{\mathcal H_p}f^{-1}_{\alpha,p,y}$ 
is a smoothing operator in the scale of Sobolev spaces.

%%%%%%%%%%%%%%%%%%%%%%%%%%%%%%%%%%%%%%%%
\subsection{Weighted estimates for the renormalized Bochner-Laplacian}
Observe that, for $v\in \mathscr{C}^\infty(X,TX)$,
\begin{equation}\label{e:3.16}
\nabla^{L^p\otimes E}_{\alpha,y;v}
:=f_{\alpha,p,y}\nabla^{L^p\otimes E}_{v}f^{-1}_{\alpha,p,y}
=\nabla^{L^p\otimes E}_{v}-\alpha v(\widetilde{d}_{p,y}).
\end{equation}
Therefore, $\nabla^{L^p\otimes E}_{\alpha,y;v}\in BD^1(X,L^p\otimes E)$. 
Moreover, for any $a>0$ and $v\in \mathscr{C}^\infty_b(X,TX)$,
the family 
\[\left\{\frac{1}{\sqrt{p}}\nabla^{L^p\otimes E}_{\alpha,y;v} : 
y\in X, |\alpha|<a\sqrt{p}\right\}\] 
is a family of operators from 
$BD^1(X,L^p\otimes E)$, uniformly bounded in $p$.

This immediately implies that, if $Q\in BD^q(X,L^p\otimes E)$,
then, for any $\alpha\in \mathbb R$ and $y\in X$, the operator 
$f_{\alpha,p,y}Qf^{-1}_{\alpha,p,y}$ is in $BD^q(X,L^p\otimes E)$. 
Moreover, for any $a>0$, the family 
$\{f_{\alpha,p,y}Qf^{-1}_{\alpha,p,y} : 
y\in X, |\alpha|<a\sqrt{p}\}$ is a family of operators from 
$BD^q(X,L^p\otimes E)$, uniformly bounded in $p$.

Now the operator  $\Delta_{p;\alpha,y}:=
f_{\alpha,p,y}\Delta_{p}f^{-1}_{\alpha,p,y}$ has the form
\begin{equation}\label{Lta}
\Delta_{p;\alpha,y}=\Delta_{p}+\alpha A_{p;y}+\alpha^2B_{p;y},
\end{equation}
where $A_{p;y}\in BD^1(X,L^p\otimes E)$ and $B_{p;y}
\in BD^0(X,L^p\otimes E)$. Moreover, for any $a>0$, the families 
$\{\frac{1}{\sqrt{p}}A_{p;y} : p\in \mathbb N^{*}, y\in X\}$ and 
$\{B_{p;y} : p\in \mathbb N^{*}, y\in X\}$ are uniformly bounded in $p$.

If $\{e_j, j=1,\ldots,2n\}$ is a local frame in $TX$ on a domain 
$U\subset X$, and functions $\Gamma^{i}_{jk}\in \mathscr{C}^\infty(U), 
i,j,k=1,\ldots,2n,$ are defined by 
$\nabla^{TX}_{e_j}e_k=\sum_{i}\Gamma^{i}_{jk}e_i$, then we have
\begin{equation}\label{Delta-p}
\Delta_{p}=-g^{jk}(Z)\left[\nabla^{L^p\otimes E}_{e_j}
\nabla^{L^p\otimes E}_{e_k}- \Gamma^{\ell}_{jk}(Z)
\nabla^{L^p\otimes E}_{e_\ell}\right]-p\tau(Z),
\end{equation}
where $(g^{jk}(Z))_{j,k}$ is the inverse of the matrix $(\langle e_{i},e_{j}\rangle(Z))_{i,j}$ 
and
\begin{equation}\label{LtaZ}
\Delta_{p;\alpha,y}=-g^{jk}(Z)\left[\nabla^{L^p\otimes E}_{\alpha,y;e_j}
\nabla^{L^p\otimes E}_{\alpha,y;e_k}
- \Gamma^{\ell}_{jk}(Z)\nabla^{L^p\otimes E}_{\alpha,y;e_\ell}\right]
-p\tau(Z).
\end{equation}
In particular,
 \begin{align}\label{ReLta}
\operatorname{Re}\, \Delta_{p;\alpha,y}=\Delta_{p} 
- \alpha^2\sum_{j,k=1}^{2n} g^{jk}(Z) e_j(\widetilde{d}_{p,y})
e_k(\widetilde{d}_{p,y})
=\Delta_{p} - \alpha^2|\nabla\widetilde{d}_{p,y}|_{g(Z)}^2.
\end{align}
From \eqref{LtaZ}, we easily get
\begin{align}\begin{split}\label{Apy}
&A_{p;y}=-\sum_{j,k=1}^{2n}g^{jk}(Z)\Big(2e_j(\widetilde{d}_{p,y})
\nabla^{L^p\otimes E}_{e_k} +e_j(e_k(\widetilde{d}_{p,y}))
-\Gamma^{\ell}_{jk}(Z)e_\ell(\widetilde{d}_{p,y})\Big),\\ %\label{Bpy}
&B_{p;y}
=-\sum_{j,k=1}^{2n}g^{jk}(Z)e_j(\widetilde{d}_{p,y})
e_k(\widetilde{d}_{p,y}).
\end{split} \end{align}
By Proposition~\ref{p:Sobolev-mapping}, for any $m\in \mathbb N$, 
there exists $C_m>0$ such that, for any $p\in \mathbb N^{*}$, $y\in X$ 
and $u\in H^{m}(X,L^p\otimes E)$,
\begin{equation}\label{e:Sobolev-mapping}
\|A_{p;y}u\|_{p,m-1}\leq C_mp^{1/2}\|u\|_{p,m},\quad \|B_{p;y}u\|_{p,m}
\leq C_m\|u\|_{p,m}.
\end{equation}

We have the following extension of Theorem~\ref{uniformD}.
\begin{thm}\label{t3.1}
There exist $C_0, C_2, C_3>0$ such that for any 
$p\in \mathbb N^{*}$, $\alpha \in \mathbb R$, $y\in X$ and 
$u, u^\prime\in \mathscr{C}^\infty_c(X,L^p\otimes E)$,
\begin{align}\begin{split}\label{e:3.17}
&\operatorname{Re}\,  \Big\langle \frac{1}{p}\Delta_{p;\alpha,y} u, 
u\Big\rangle_{p,0}\geq \|u\|^2_{p,1}
-\Big(C_2+C_0\frac{\alpha^2}{p}\Big)\|u\|^2_{p,0},\\[3pt]
&\left|\Big\langle \frac{1}{p}\Delta_{p;\alpha,y} u, 
u^\prime\Big\rangle_{p,0}\right|
\leq C_3\left(\|u\|_{p,1}\|u^\prime\|_{p,1}
+ \left(\frac{|\alpha|}{\sqrt{p}}\|u\|_{p,1}
+\frac{\alpha^2}{p}\|u\|_{p,0}\right)\|u^\prime\|_{p,0}\right).
\end{split} \end{align}
\end{thm}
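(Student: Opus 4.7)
The plan is to derive both estimates from the decomposition \eqref{Lta},
\[\Delta_{p;\alpha,y}=\Delta_{p}+\alpha A_{p;y}+\alpha^2 B_{p;y},\]
by treating the three pieces separately and invoking Theorem~\ref{uniformD} for the unperturbed part. The uniform bounds on the coefficients coming from \eqref{dist} (so that $\widetilde d_{p,y}$ behaves like a Lipschitz function with derivatives of order $\sqrt p$-scaled) together with the bounded geometry of $g^{TX}$ will control the $\alpha$- and $\alpha^2$-dependent terms.

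For the first (real part) inequality, the key simplification is \eqref{ReLta}: the real part of the perturbation is merely a multiplication operator,
\[\operatorname{Re}\,(\tfrac{1}{p}\Delta_{p;\alpha,y})=\tfrac{1}{p}\Delta_p-\tfrac{\alpha^2}{p}\,|\nabla\widetilde d_{p,y}|^{2}_{g(Z)}.\]
Applying \eqref{dist} with $k=1$ gives $|\nabla\widetilde d_{p,y}|_{g(Z)}\le C_0^{1/2}$ uniformly in $p$ and $y$, so
\[\operatorname{Re}\,\langle \tfrac{1}{p}\Delta_{p;\alpha,y}u,u\rangle_{p,0}\ge \langle\tfrac{1}{p}\Delta_p u,u\rangle_{p,0}-C_0\tfrac{\alpha^2}{p}\|u\|^{2}_{p,0}.\]
Combining with \eqref{e:deltap1} of Theorem~\ref{uniformD} yields the first inequality.

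For the second (sesquilinear) inequality I would bound each of the three terms in
\[\langle\tfrac{1}{p}\Delta_{p;\alpha,y}u,u'\rangle_{p,0}
=\langle\tfrac{1}{p}\Delta_p u,u'\rangle_{p,0}
+\tfrac{\alpha}{p}\langle A_{p;y}u,u'\rangle_{p,0}
+\tfrac{\alpha^2}{p}\langle B_{p;y}u,u'\rangle_{p,0}.\]
The first term is handled by \eqref{e:deltap2}. For the other two, I would use the observation made right after \eqref{Lta}: by the uniform bounds on $e_j(\widetilde d_{p,y})$ and $e_j(e_k(\widetilde d_{p,y}))$ derived from \eqref{dist}, the family $\{\tfrac{1}{\sqrt p}A_{p;y}\}$ is uniformly bounded in $BD^1(X,L^p\otimes E)$ and $\{B_{p;y}\}$ is uniformly bounded in $BD^0(X,L^p\otimes E)$ (this is exactly \eqref{e:Sobolev-mapping} for $m=1$ and $m=0$). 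Therefore $\|A_{p;y}u\|_{p,0}\le C\sqrt p\,\|u\|_{p,1}$ and $\|B_{p;y}u\|_{p,0}\le C\|u\|_{p,0}$, which by Cauchy--Schwarz produces the two remaining terms $\tfrac{|\alpha|}{\sqrt p}\|u\|_{p,1}\|u'\|_{p,0}$ and $\tfrac{\alpha^2}{p}\|u\|_{p,0}\|u'\|_{p,0}$ after absorbing the $1/p$ into the scaling.

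There is no real obstacle: the proof is essentially bookkeeping in the three variables $(p,\alpha,y)$. The only thing that needs care is keeping track of the $\sqrt p$ factors, which is the reason for choosing the weight function with parameter $\gamma=1/\sqrt p$ in \eqref{e:3.9}; this choice is precisely what makes $\tfrac{1}{\sqrt p}A_{p;y}$, rather than $A_{p;y}$ itself, the object bounded uniformly in $p$, and explains the appearance of $|\alpha|/\sqrt p$ and $\alpha^2/p$ (rather than $|\alpha|$ and $\alpha^2$) in the statement.
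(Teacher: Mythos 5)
Your proof is correct and takes essentially the same route as the paper: both estimates are derived from the decomposition \eqref{Lta}, the identity \eqref{ReLta} for the real part, the unperturbed bounds of Theorem~\ref{uniformD}, and the operator norm bounds \eqref{e:Sobolev-mapping} for $A_{p;y}$ and $B_{p;y}$, which rest on \eqref{dist} and bounded geometry. Your closing remark about the role of the $\gamma=1/\sqrt p$ scaling in producing the $|\alpha|/\sqrt p$ and $\alpha^2/p$ factors is an accurate and worthwhile observation that the paper leaves implicit.
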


\begin{proof} Using Theorem~\ref{uniformD}, \eqref{Lta}, \eqref{ReLta}, 
	and \eqref{e:Sobolev-mapping}, we get
\begin{equation}\label{e:3.18}
\begin{split}
\operatorname{Re}\, \Big\langle \frac{1}{p} \Delta_{p;\alpha,y} u, 
u\Big\rangle_{p,0}
&\geq \Big\langle \frac{1}{p} \Delta_{p} u, u\Big\rangle_{p,0}
-C_0\frac{\alpha^2}{p} \|u\|^2_{p,0}\\[3pt]
&\geq \|u\|^2_{p,1}
-\Big(C_2+C_0\frac{\alpha^2}{p}\Big)\|u\|^2_{p,0}\,,
\end{split}
\end{equation}
and
\begin{equation}\label{e:3.19}
\begin{split}
&\left|\Big\langle \frac{1}{p} \Delta_{p;\alpha,y} u, 
u^\prime\Big\rangle_{p,0}\right|\\[3pt]
&\leq \left|\Big\langle \frac{1}{p} \Delta_{p} u, 
u^\prime\Big\rangle_{p,0}\right| 
+ |\alpha| \left|\Big\langle \frac{1}{p} A_{p,y} u, 
u^\prime\Big\rangle_{p,0}\right|
+ \alpha^2 \left|\Big\langle \frac{1}{p} B_{p,y} u,
u^\prime\Big\rangle_{p,0}\right|\\[3pt]
&\leq  C_3\left(\|u\|_{p,1}\|u^\prime\|_{p,1}
+\frac{|\alpha|}{\sqrt{p}}\|u\|_{p,1}\|u^\prime\|_{p,0}
+\frac{\alpha^2}{p} \|u\|_{p,0}\|u^\prime\|_{p,0}\right). %\qedhere
\end{split}
\end{equation}
\end{proof}

%%%%%%%%%%%%%%%%%%%%%%%%%%%%%%%%%%%
\subsection{Weighted estimates for the resolvent}\label{s3.3}
Theorems \ref{Thm1.7W}- \ref{Thm1.9} are the weighted analogs of
\cite[Theorems 4.8-4.10]{DLM04a}, \cite[Theorems 1.7-1.9]{ma-ma08}
which are inspired by \cite[\S 11]{BL}.
Now we extend Theorem~\ref{Thm1.7} to the setting of weighted spaces.

\begin{thm}\label{Thm1.7W}
There exist $c>0$, $C>0$ and $p_0\in \mathbb N$ such that, for all 
$\lambda\in \delta$, $p\geq p_0$, $|\alpha|<c\sqrt{p}$, $y\in X$, 
the operator $\lambda-\frac{1}{p}\Delta_{p;\alpha,y}$ is invertible in
$L^2(X,L^p\otimes E)$, and
%for the inverse operator $(\lambda-\frac{1}{p}\Delta_{p;\alpha,y})^{-1}$, 
we have
\begin{align}\label{e:3.20}
\left\|\Big(\lambda-\frac{1}{p}\Delta_{p;\alpha,y}\Big)^{-1}
\right\|^{0,0}_p\leq C, \quad
\left\|\Big(\lambda-\frac{1}{p}\Delta_{p;\alpha,y}\Big)^{-1}
\right\|^{-1,1}_p\leq C.
\end{align}
\end{thm}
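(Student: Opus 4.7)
The plan is to regard $\lambda-\tfrac{1}{p}\Delta_{p;\alpha,y}$ as a perturbation of $\lambda-\tfrac{1}{p}\Delta_{p}$ and invert it via a Neumann series, taking Theorem~\ref{Thm1.7} as the baseline. By \eqref{Lta},
\begin{equation*}
\lambda-\tfrac{1}{p}\Delta_{p;\alpha,y}=\bigl(\lambda-\tfrac{1}{p}\Delta_{p}\bigr)-R_{p;\alpha,y},\qquad R_{p;\alpha,y}:=\tfrac{\alpha}{p}A_{p;y}+\tfrac{\alpha^{2}}{p}B_{p;y}.
\end{equation*}
Applying \eqref{e:Sobolev-mapping} directly to $A_{p;y}$ (giving $H^{1}\to L^{2}$) and by duality on its formal adjoint (giving $L^{2}\to H^{-1}$), together with the $L^{2}$-boundedness of $B_{p;y}$, one obtains uniformly in $p\in\mathbb{N}^{*}$ and $y\in X$
\begin{equation*}
\|R_{p;\alpha,y}\|^{1,0}_{p}+\|R_{p;\alpha,y}\|^{0,-1}_{p}\leq C_{1}\Bigl(\tfrac{|\alpha|}{\sqrt{p}}+\tfrac{\alpha^{2}}{p}\Bigr),
\end{equation*}
which is at most $C_{1}(c+c^{2})$ whenever $|\alpha|<c\sqrt{p}$.

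For the $L^{2}\to L^{2}$ estimate in \eqref{e:3.20}, I would use the factorization
\begin{equation*}
\lambda-\tfrac{1}{p}\Delta_{p;\alpha,y}=\bigl(\lambda-\tfrac{1}{p}\Delta_{p}\bigr)\bigl[I-\bigl(\lambda-\tfrac{1}{p}\Delta_{p}\bigr)^{-1}R_{p;\alpha,y}\bigr].
\end{equation*}
The bracketed operator decomposes as $L^{2}\xrightarrow{R_{p;\alpha,y}}H^{-1}\xrightarrow{(\lambda-\tfrac{1}{p}\Delta_{p})^{-1}}H^{1}\hookrightarrow L^{2}$, and combining the perturbation bound with the second estimate of Theorem~\ref{Thm1.7} controls its $L^{2}$-operator norm by $CC_{1}(c+c^{2})$. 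Fixing $c>0$ small enough that this is at most $\tfrac{1}{2}$, uniformly in $\lambda\in\delta$, $p\geq p_{0}$ and $y\in X$, the bracket is invertible on $L^{2}$ by a Neumann series with inverse norm at most $2$, and composition with $(\lambda-\tfrac{1}{p}\Delta_{p})^{-1}$ yields the first bound in \eqref{e:3.20}. For the $H^{-1}\to H^{1}$ bound, I would use the mirror factorization
\begin{equation*}
\lambda-\tfrac{1}{p}\Delta_{p;\alpha,y}=\bigl[I-R_{p;\alpha,y}\bigl(\lambda-\tfrac{1}{p}\Delta_{p}\bigr)^{-1}\bigr]\bigl(\lambda-\tfrac{1}{p}\Delta_{p}\bigr),
\end{equation*}
whose bracket decomposes as $H^{-1}\xrightarrow{(\lambda-\tfrac{1}{p}\Delta_{p})^{-1}}H^{1}\xrightarrow{R_{p;\alpha,y}}L^{2}\hookrightarrow H^{-1}$ with the same small norm, so an identical Neumann-series argument applies, this time on $H^{-1}$.

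The only delicate point is to verify that the smallness threshold $c$ can be chosen once and for all, independently of $\lambda\in\delta$, $p\geq p_{0}$ and $y\in X$. This uniformity is built into the constants of Theorem~\ref{Thm1.7} and into the bounded-geometry estimates \eqref{dist} and \eqref{e:Sobolev-mapping}, so no additional difficulty arises; in particular $p_{0}$ can be taken to be the one furnished by Theorem~\ref{Thm1.7}.
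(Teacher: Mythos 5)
Your proof is correct and follows essentially the same strategy as the paper: a Neumann-series perturbation of $\lambda-\tfrac1p\Delta_p$ anchored in Theorem~\ref{Thm1.7}, with the perturbation's smallness deduced from \eqref{Lta} and \eqref{e:Sobolev-mapping}. The only cosmetic differences are that you bound $R_{p;\alpha,y}$ alone in both the $H^1\!\to L^2$ and (by duality on the formal adjoint) $L^2\!\to H^{-1}$ norms and use two mirror factorizations, whereas the paper bounds the composite $(\Delta_{p;\alpha,y}-\Delta_p)R(\lambda,\tfrac1p\Delta_p)$ directly in the $H^{-1}\!\to L^2$ norm, sums the resolvent series once to get the $\|\cdot\|^{-1,1}_p$ bound, and obtains the $\|\cdot\|^{0,0}_p$ bound from $\|\cdot\|^{0,0}_p\le\|\cdot\|^{-1,1}_p$.
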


\begin{proof}
Let us denote in this proof
$$R(\lambda,\tfrac1p\Delta_p):=\Big(\lambda-\frac{1}{p}\Delta_{p}\Big)^{-1},
\quad R\big(\lambda,\tfrac1p\Delta_{p;\alpha,y}\big):=\Big(\lambda-\frac{1}{p}\Delta_{p;\alpha,y}
\Big)^{-1}.$$
By Theorem~\ref{Thm1.7}, \eqref{Lta} and \eqref{e:Sobolev-mapping}, 
it follows that, for all $\lambda\in \delta$, $p\in \mathbb N^{*}$, 
$\alpha\in \mathbb R$ and $y\in X$, we have
% \begin{equation}
% \begin{split}\label{e:3.21}
% &\left\|\Big(\Delta_{p;\alpha,y}-\Delta_{p}\Big)\Big(\lambda
% -\frac{1}{p}\Delta_{p}\Big)^{-1}\right\|^{0,0}_p 
% = \left\|\Big(\alpha A_{p,y}+\alpha^2B_{p,y}\Big)
% \Big(\lambda-\frac{1}{p}\Delta_{p}\Big)^{-1}\right\|^{0,0}_p\\[3pt] 
% &\leq C\Big(\alpha\sqrt{p}\left\|\Big(\lambda-\frac{1}{p}\Delta_{p}
%\Big)^{-1}\right\|^{0,1}_p +\alpha^2\left\|\Big(\lambda-\frac{1}{p}
%\Delta_{p}\Big)^{-1}\right\|^{0,0}_p\Big) 
% \leq C\Big(\alpha\sqrt{p}+\alpha^2\Big).
% \end{split}\end{equation}
% Similarly, we have
\begin{equation}\label{e:3.22}
\begin{split}
&\left\|(\Delta_{p;\alpha,y}-\Delta_{p})
R(\lambda,\tfrac1p\Delta_p)\right\|^{-1,0}_p
= \left\|\Big(\alpha A_{p,y}+\alpha^2B_{p,y}\Big)
R(\lambda,\tfrac1p\Delta_p)\right\|^{-1,0}_p\\[3pt]
&\leq  C\left(|\alpha|\sqrt{p}
\left\|R(\lambda,\tfrac1p\Delta_p)\right\|^{-1,1}_p
+\alpha^2\left\|R(\lambda,\tfrac1p\Delta_p)
\right\|^{-1,0}_p\right) 
\leq C\Big(|\alpha|\sqrt{p}+\alpha^2\Big).
\end{split}
\end{equation}
Choose $c>0$ such that $C(c+c^2)<\frac 12$. Then, if 
$|\alpha|<c\sqrt{p}$, we have
\begin{equation}\label{e:d}
\left\|\Big(\frac{1}{p}\Delta_{p;\alpha,y}-\frac{1}{p}\Delta_{p}\Big)
R(\lambda,\tfrac1p\Delta_p)\right\|^{0,0}_p
%\begin{equation}\label{e:d1}
\leq \left\|\Big(\frac{1}{p}\Delta_{p;\alpha,y}-\frac{1}{p}\Delta_{p}\Big)
R(\lambda,\tfrac1p\Delta_p)\right\|^{-1,0}_p<\frac 12\,\cdot
\end{equation}
Therefore, for all $\lambda\in \delta$, $p\in \mathbb N^{*}$, 
$\alpha\in \mathbb R$, $|\alpha|<c\sqrt{p}$, and $y\in X$, 
the operator $\lambda-\frac{1}{p}\Delta_{p;\alpha,y}$ is invertible in 
$L^2$, and we have
\begin{equation}\label{e:res}
R\big(\lambda,\tfrac1p\Delta_{p;\alpha,y}\big)
=R(\lambda,\tfrac1p\Delta_p)\\ 
+R(\lambda,\tfrac1p\Delta_p)
\sum_{j=1}^{\infty}\Big(\Big(\frac{1}{p}\Delta_{p;\alpha,y}
-\frac{1}{p}\Delta_{p}\Big)R\big(\lambda,\tfrac1p\Delta_{p;\alpha,y}\big)\Big)^j.
\end{equation}
Therefore, by \eqref{e:Thm1.7}, \eqref{e:d} %, \eqref{e:d1}  
and \eqref{e:res}, we get
% \begin{multline}\label{e:3.25}
% \left\|R\big(\lambda,\tfrac1p\Delta_{p;\alpha,y}\big)
% \right\|^{0,0}_p\leq \left\|\Big(\lambda-\frac{1}{p}
% \Delta_{p}\Big)^{-1}\right\|^{0,0}_p\\ 
% +\left\| R(\lambda,\tfrac1p\Delta_p)\right\|^{0,0}_p
% \sum_{j=1}^{\infty}\left\| \Big(\Big(\frac{1}{p}\Delta_{p;\alpha,y}
% -\frac{1}{p}\Delta_{p}\Big)\Big(\lambda
% -\frac{1}{p}\Delta_{p;\alpha,y}\Big)^{-1}\Big)^j\right\|^{0,0}_p\\ 
% \leq 2\left\|R(\lambda,\tfrac1p\Delta_p)
% \right\|^{0,0}_p\leq 2C.
% \end{multline}
% Similarly, by \eqref{e:Thm1.7}, \eqref{e:d} and \eqref{e:res}, we get
\begin{equation}\label{e:3.26}
\begin{split}
&\left\|R\big(\lambda,\tfrac1p\Delta_{p;\alpha,y}\big)\right\|^{-1,1}_p\leq 
\left\|R(\lambda,\tfrac1p\Delta_p)\right\|^{-1,1}_p\\ 
%&\hspace{7mm}
&+\left\| R(\lambda,\tfrac1p\Delta_p)
\right\|^{0,1}_p\sum_{j=1}^{\infty}\left\| \Big(\Big(\frac{1}{p}
\Delta_{p;\alpha,y}-\frac{1}{p}\Delta_{p}\Big)
R\big(\lambda,\tfrac1p\Delta_{p;\alpha,y}\big)\Big)^{j-1}
\right\|^{0,0}_p \\ 
%&\hspace{15mm}
& \times 
\left\| \Big(\frac{1}{p}\Delta_{p;\alpha,y}
-\frac{1}{p}\Delta_{p}
\Big)R\big(\lambda,\tfrac1p\Delta_{p;\alpha,y}\big)
\right\|^{-1,0}_p\\
&%\hspace{5mm}
\leq  C + C \sum_{j=1}^{\infty}2^{-j} = 2C.
\end{split}
\end{equation}
Since $\|\cdot \|^{0,0}_{p}\leq \|\cdot \|^{-1,1}_{p}$,
\eqref{e:3.26} entails \eqref{e:3.20}.
\end{proof}

In the sequel, we will keep notation $c$ for the constant given by 
Theorem \ref{Thm1.7W}, which will be usually related with the interval 
$(-c\sqrt{p}, c\sqrt{p})$ of admissible values of the parameter $\alpha$.

\begin{remark}\label{r:conj}
Observe that, for any $\lambda\in \delta$, $p\geq p_0$, 
$\alpha\in \mathbb R$ and $y\in X$, the operators 
$(\lambda-\frac{1}{p}\Delta_{p;\alpha,y})^{-1}$ and 
$(\lambda-\frac{1}{p}\Delta_{p})^{-1}$ are related by the identity
\begin{equation}\label{LaL}
\Big(\lambda-\frac{1}{p}\Delta_{p;\alpha,y}\Big)^{-1}
=f_{\alpha,p,y}\Big(\lambda-\frac{1}{p}\Delta_{p}\Big)^{-1}
f^{-1}_{\alpha,p,y},
\end{equation}
which should be understood in the following way. If $\alpha<0$, then,
for any $s\in \mathscr{C}^\infty_c(X, L^p\otimes E)$, the expression 
$f_{\alpha,p,y}(\lambda-\frac{1}{p}\Delta_{p})^{-1}f^{-1}_{\alpha,p,y}s$ 
makes sense and defines a function in $L^2(X, L^p\otimes E)$.
Thus, we get a well-defined operator 
\begin{align}\label{e:3.30}
	f_{\alpha,p,y}\Big(\lambda-\frac{1}{p}
	\Delta_{p}\Big)^{-1}f^{-1}_{\alpha,p,y}:
\mathscr{C}^\infty_c(X, L^p\otimes E)\to L^2(X, L^p\otimes E),
\end{align}
and one can check that $f_{\alpha,p,y}
(\lambda-\frac{1}{p}\Delta_{p})^{-1}f^{-1}_{\alpha,p,y}u
=(\lambda-\frac{1}{p}\Delta_{p;\alpha,y})^{-1}u$ for any 
$u\in \mathscr{C}^\infty_c(X, L^p\otimes E)$. So \eqref{LaL} 
means that the operator 
$f_{\alpha,p,y}(\lambda-\frac{1}{p}\Delta_{p})^{-1}f^{-1}_{\alpha,p,y}$
extends to a bounded operator in $L^2(X, L^p\otimes E)$, 
which coincides with $(\lambda-\frac{1}{p}\Delta_{p;\alpha,y})^{-1}$.
If $\alpha>0$, then, for any $u\in L^2(X, L^p\otimes E)$, the expression 
$f_{\alpha,p,y}(\lambda-\frac{1}{p}\Delta_{p})^{-1}f^{-1}_{\alpha,p,y}u$
makes sense as a distribution on $X$. Thus, we get a well-defined operator 
\begin{align}\label{e:3.31}
	f_{\alpha,p,y}\Big(\lambda-\frac{1}{p}
	\Delta_{p}\Big)^{-1}f^{-1}_{\alpha,p,y}: 
L^2(X, L^p\otimes E) \to \mathscr{C}^{-\infty}(X, L^p\otimes E).
\end{align}
So \eqref{LaL} means that this operator is indeed a bounded operator in 
$L^2(X, L^p\otimes E)$, which coincides with 
$(\lambda-\frac{1}{p}\Delta_{p;\alpha,y})^{-1}$.
\end{remark}

\begin{thm}\label{Thm1.9}
For any $p\in \mathbb N^{*}$, $p\geq p_0$, $\lambda\in \delta$, 
$m\in \mathbb N$, $y\in X$ and $|\alpha|<c\sqrt{p}$ with the constant 
$c$ as in Theorem \ref{Thm1.7W}, the resolvent 
$(\lambda-\frac{1}{p}\Delta_{p;\alpha,y})^{-1}$ maps 
$H^m(X,L^p\otimes E)$ to $H^{m+1}(X,L^p\otimes E)$. 
Moreover, for any $m\in \mathbb N$, there exists $C_m>0$ 
such that for any $p\geq p_0$, $\lambda\in \delta$, $y\in X$ and 
$|\alpha|<c\sqrt{p}$,
\begin{equation}\label{e:mm+1}
\left\|\Big(\lambda-\frac{1}{p}
\Delta_{p;\alpha,y}\Big)^{-1}\right\|_{p}^{m,m+1} \leq C_{m}.
\end{equation}
\end{thm}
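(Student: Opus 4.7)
The plan is to argue by induction on $m\geq 0$. The base case $m=0$ will follow at once from the second estimate of Theorem~\ref{Thm1.7W}, since $\|u\|_{p,-1}\leq \|u\|_{p,0}$, so I focus on the inductive step from $m-1$ to $m$ (with $m\geq 1$). Given $v\in H^m(X,L^p\otimes E)$, set $u:=(\lambda-\tfrac{1}{p}\Delta_{p;\alpha,y})^{-1}v$; the inductive hypothesis immediately yields $\|u\|_{p,m}\leq C_{m-1}\|v\|_{p,m-1}\leq C_{m-1}\|v\|_{p,m}$, and the task is to upgrade this to $\|u\|_{p,m+1}\leq C_m\|v\|_{p,m}$.

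I would first apply the scaled covariant derivative $D_p:=\tfrac{1}{\sqrt{p}}\nabla^{L^p\otimes E}$ to the equation $(\lambda-\tfrac{1}{p}\Delta_{p;\alpha,y})u=v$ to obtain
\begin{equation*}
(\lambda-\tfrac{1}{p}\widetilde\Delta_{p;\alpha,y})(D_p u)=D_p v+[D_p,\tfrac{1}{p}\Delta_{p;\alpha,y}]u,
\end{equation*}
where $\widetilde\Delta_{p;\alpha,y}$ denotes the natural extension of $\Delta_{p;\alpha,y}$ to sections of $T^*X\otimes L^p\otimes E$. Since $T^*X\otimes E$ still has bounded geometry, Theorem~\ref{Thm1.7W} and the inductive hypothesis apply verbatim with $E$ replaced by $T^*X\otimes E$, giving
\begin{equation*}
\|D_p u\|_{p,m}\leq C_{m-1}\bigl(\|D_p v\|_{p,m-1}+\|[D_p,\tfrac{1}{p}\Delta_{p;\alpha,y}]u\|_{p,m-1}\bigr).
\end{equation*}

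The technical heart of the argument will be a commutator identity. Starting from the explicit expressions \eqref{Delta-p} and \eqref{LtaZ} and using that the curvature of $\nabla^{L^p\otimes E}$ equals $pR^L+R^E$, a direct computation will show
\begin{equation*}
[D_p,\tfrac{1}{p}\Delta_{p;\alpha,y}]=\tfrac{1}{\sqrt{p}}Q^{(2)}_{p;\alpha,y}+Q^{(1)}_{p;\alpha,y},
\end{equation*}
where each $Q^{(j)}_{p;\alpha,y}$ is a family in $BD^j$ that is uniformly bounded in the scaled calculus for $p\geq p_0$, $y\in X$ and $|\alpha|<c\sqrt{p}$. The crucial cancellation is that the factor $p$ produced by $[\nabla_a,\nabla_j]=pR^L_{aj}+R^E_{aj}+(\ldots)$ in the leading term of $[\nabla_a,\Delta_p]$ is exactly offset by the $\tfrac{1}{p\sqrt{p}}$ normalisation coming from $D_p$ and $\tfrac{1}{p}\Delta_p$, while the estimates \eqref{dist} on $\widetilde d_p$ guarantee analogous behaviour for the pieces $\tfrac{\alpha}{p}A_{p,y}$ and $\tfrac{\alpha^2}{p}B_{p,y}$ when $|\alpha|<c\sqrt{p}$. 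Proposition~\ref{p:Sobolev-mapping} will then yield
\begin{equation*}
\|[D_p,\tfrac{1}{p}\Delta_{p;\alpha,y}]u\|_{p,m-1}\leq \tfrac{C}{\sqrt{p}}\|u\|_{p,m+1}+C\|u\|_{p,m}.
\end{equation*}

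Combining these estimates with the elementary bound $\|u\|_{p,m+1}\leq C(\|u\|_{p,m}+\|D_p u\|_{p,m})$ (valid up to lower-order commutators between covariant derivatives) will produce
\begin{equation*}
\|u\|_{p,m+1}\leq C\bigl(\|v\|_{p,m}+\|u\|_{p,m}+\tfrac{1}{\sqrt{p}}\|u\|_{p,m+1}\bigr).
\end{equation*}
For $p$ sufficiently large (depending on $m$), the last term is absorbed into the left-hand side, after which the inductive bound on $\|u\|_{p,m}$ completes the step, up to possibly enlarging $p_0$. The principal obstacle will be precisely this commutator bookkeeping: confirming that in the scaled calculus $[D_p,\tfrac{1}{p}\Delta_{p;\alpha,y}]$ splits into a bounded order-one part and an order-two part with coefficient $O(1/\sqrt{p})$, since it is exactly this smallness that drives the absorption argument and closes the induction.
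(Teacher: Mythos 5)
Your strategy---inducting on the Sobolev index, differentiating the resolvent equation once, and absorbing a small higher-order error---is genuinely different from the paper's. The paper instead writes $D_{p,e_{j_1}}\cdots D_{p,e_{j_\ell}}\big(\lambda-\frac{1}{p}\Delta_{p;\alpha,y}\big)^{-1}$ directly as a linear combination of alternating products of resolvents and iterated commutators $\big[D_{p,e_{i_1}},[\ldots,[D_{p,e_{i_l}},\frac{1}{p}\Delta_{p;\alpha,y}]\ldots]\big]$ (see \eqref{e:3.38}--\eqref{e:RR} and Proposition~\ref{p:iterated}), and bounds each factor using only that the resolvent is $H^{-1}\to H^1$ bounded (Theorem~\ref{Thm1.7W}) while the iterated commutators are second order with uniformly bounded coefficients, hence $H^1\to H^{-1}$ bounded. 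The $2$-derivative gain of the resolvent exactly cancels the $2$-derivative loss of the commutator, so no smallness in $p$ is needed. You instead exploit a sharper structural fact, namely that the second-order part of $[D_p,\frac{1}{p}\Delta_{p;\alpha,y}]$ carries a $1/\sqrt{p}$ prefactor (from $[D_{p,e_k},g^{ij}]=\frac{1}{\sqrt{p}}e_k(g^{ij})$), whereas the $O(1)$ contributions are of order at most one (after the curvature cancellation $[D_{p,e_k},D_{p,e_i}]=R^L(e_k,e_i)+\frac{1}{p}R^E(e_k,e_i)$, and using \eqref{dist} for the $\alpha$-dependent terms). That observation is correct, but the paper does not need it.

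The absorption step, however, introduces two points that still need to be supplied before the argument is complete. First, the phrase ``natural extension'' of $\Delta_{p;\alpha,y}$ to $T^*X\otimes L^p\otimes E$ must be made precise: the renormalized Bochner--Laplacian attached to the auxiliary bundle $T^*X\otimes E$ (to which Theorem~\ref{Thm1.7W} applies verbatim) involves $\nabla^{TX}$ and so differs from componentwise application of $\Delta_{p;\alpha,y}$ by further curvature terms; either you must incorporate those into the commutator bookkeeping, or you should first reduce to the local normal-coordinate setting via \eqref{localSobolev}--\eqref{pm-prime} as the paper does and then apply the inductive hypothesis componentwise, taking care of cut-offs. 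Second, absorbing the $\frac{C}{\sqrt{p}}\|u\|_{p,m+1}$ term requires $p\geq p_{0,m}$ with $p_{0,m}\geq p_0$ depending on $m$; since the theorem is asserted uniformly for all $p\geq p_0$ with $p_0$ fixed by Theorem~\ref{Thm1.7W}, you must still supply the estimate on the finite set $p_0\leq p<p_{0,m}$ (e.g.\ by a uniform elliptic estimate in the scaled norms, or by noting the required constants depend continuously on the parameters and the set of excluded $p$ is finite). The paper's commutator expansion sidesteps both issues and gives the bound uniformly with the original $p_0$.
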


 \begin{proof}
The first statement of the theorem is a consequence of a general fact 
about operators on manifolds of bounded geometry. The operator 
$(\lambda-\frac{1}{p}\Delta_{p;\alpha,y})^{-1}$ is
a pseudodifferential operator of order $-2$, so it maps 
$H^m(X,L^p\otimes E)$ to $H^{m+2}(X,L^p\otimes E)$.
It remains to prove the norm estimate \eqref{e:mm+1}.

To prove \eqref{e:mm+1}, first, we introduce normal coordinates 
near an arbitrary point $x_0\in X$. As above, we will identify the 
balls $B^{T_{x_0}X}(0,a^X)$ and $B^{X}(x_0,a^X)$ via the exponential 
map $\exp^X_{x_0} : T_{x_0}X\to X$. Furthermore, we choose
a trivialization of the bundle $L$ and $E$ over $B^{X}(x_0,a^X)$,  
identifying the fibers $L_Z$ and $E_Z$ of $L$ and $E$ at 
$Z\in B^{T_{x_0}X}(0,a^X)\cong B^{X}(x_0,a^X)$ with $L_{x_0}$ 
and $E_{x_0}$ by parallel transport with respect to the connection 
$\nabla^L$ and $\nabla^E$ along the curve 
$\gamma_Z : [0,1]\ni u \to \exp^X_{x_0}(uZ)$. 
Denote by $\nabla^{L^p\otimes E}$ and $h^{L^p\otimes E}$ 
the connection and the Hermitian metric on the trivial line bundle with 
fiber $(L^p\otimes E)_{x_0}$ induced by this trivialization.
Let $\Gamma^L$, $\Gamma^E$ be the connection forms of 
$\nabla^L$ and $\nabla^E$ with respect to some fixed frames for
$L$, $E$ which is parallel along the curve 
$\gamma_Z : [0,1]\ni u \to \exp^X_{x_0}(uZ)$ under our trivialization of
$B^{T_{x_0}X}(0,\varepsilon)$.
Then we have
\begin{align}\label{e:3.32}
\nabla^{L^p}_U=\nabla_U+p\Gamma^L(U)+\Gamma^E(U).
\end{align}

For any $x_0\in X$, fix an orthonormal basis $e_1,\ldots,e_{2n}$ in 
$T_{x_0}X$.  We still denote by 
$\{e_{j}\}_{j=1}^{2n}$ the constant vector fields $e_j(Z)=e_j$
on $B^{T_{x_0}X}(0,\varepsilon)$.
%For $Z\in B^{T_{x_0}X}(0,\varepsilon)$ and $j=1,\ldots,2n$,
%let $e_j(Z)$ be the parallel transport of $e_j$ with respect to 
%$\nabla^{TX}$ along the above  curve $\gamma_Z$. 
One can show that the restriction
of the norm $\|\cdot\|_{p,m}$ to
$\mathscr{C}^\infty_c(B^{T_{x_0}X}(0,\varepsilon), L^p\otimes E)
\cong \mathscr{C}^\infty_c(B^{X}(x_0,\varepsilon), L^p\otimes E)$ 
is equivalent uniformly on $x_0\in X$ and $p\in \mathbb N^{*}$
to the norm $\|\cdot \|^\prime_{p,m}$ given for 
$u\in \mathscr{C}^\infty_c(B^{T_{x_0}X}(0,\varepsilon), L^p\otimes E)$
by
\begin{equation}\label{localSobolev}
\|u\|^\prime_{p,m}=\Big(\sum_{\ell=0}^m\sum_{j_1,\ldots,j_\ell=1}^{2n}
\int_{T_{x_0}X} \Big(\frac{1}{\sqrt{p}}\Big)^\ell|
\nabla^{L^p\otimes E}_{e_{j_1}}\cdots 
\nabla^{L^p\otimes E}_{e_{j_\ell}}u|^2dZ\Big)^{1/2}.
\end{equation}
That is, there exists $C_m>0$ such that, for any $x_0\in X$, 
$p\in \mathbb N^{*}$ 
%and $u\in \mathscr{C}^\infty_c(B^{T_{x_0}X}(0,\varepsilon), 
%L^p\otimes E)
%\cong \mathscr{C}^\infty_c(B^{X}(x_0,\varepsilon), L^p\otimes E)$, 
we have
\begin{equation}\label{pm-prime}
C_m^{-1}\|u\|^\prime_{p,m}\leq \|u\|_{p,m}\leq C_m\|u\|^\prime_{p,m}\,,
\end{equation}
for any $u\in \mathscr{C}^\infty_c(B^{T_{x_0}X}(0,\varepsilon),
L^p\otimes E)
\cong \mathscr{C}^\infty_c(B^{X}(x_0,\varepsilon), L^p\otimes E)$.
By choosing an appropriate covering of $X$ by normal coordinate charts, 
we can reduce our considerations to the local setting. Without loss
of generality, we can assume that 
$u\in \mathscr{C}^\infty_c(B^{T_{x_0}X}(0,\varepsilon), L^p\otimes E)$
for some $x_0\in X$ and the Sobolev norm of $u$ is given by
the norm $\|u\|^\prime_{p,m}$ given by \eqref{localSobolev}.
(Later on, we omit `prime' for simplicity.)
We have to show the estimate \eqref{e:mm+1}, uniform on $x_0$. 
That is, we claim that, for any $m\in \mathbb N$, there exists $C_m>0$ 
such that for any $p\in \mathbb N^{*}$, $p\geq p_0$, 
$\lambda\in \delta$, $y\in X$, $|\alpha|<c\sqrt{p}$ and $x_0\in X$,
\begin{equation}\label{e:local}
\left\|\Big(\lambda-\frac{1}{p}\Delta_{p;\alpha,y}\Big)^{-1}u
\right\|_{p,m+1}\leq C_{m}\left\|u\right\|_{p,m},\quad
u\in \mathscr{C}^\infty_c(B^{T_{x_0}X}(0,\varepsilon), L^p\otimes E).
\end{equation}

\begin{prop}\label{p:iterated}
For any $1\leq j_1 \leq \ldots \leq j_k\leq 2n$, the iterated commutator
\begin{equation}\label{e:comm}
\left[\frac{1}{\sqrt{p}}\nabla^{L^p\otimes E}_{e_{j_1}},
\left[\frac{1}{\sqrt{p}}\nabla^{L^p\otimes E}_{e_{j_2}},\ldots, 
\left[\frac{1}{\sqrt{p}}\nabla^{L^p\otimes E}_{e_{j_k}}, 
\frac{1}{p}\Delta_{p;\alpha,y}\right]\ldots \right]\right]
\end{equation}
defines a family of second order differential operators, bounded uniformly 
on $p\in \mathbb N^{*}$, $y\in X$, $|\alpha|<c\sqrt{p}$ and $x_0\in X$.
In particular, there exists $C>0$ such that for $p\in \mathbb N^{*}$, 
$|\alpha|<c\sqrt{p}$, $y\in X$ and 
$u,u^\prime\in \mathscr{C}^\infty_c(B^{T_{x_0}X}(0,\varepsilon), 
L^p\otimes E)$
\begin{multline}\label{e:3.35}
\left|\Big\langle \left[\frac{1}{\sqrt{p}}\nabla^{L^p\otimes E}_{e_{j_1}}, 
\left[\frac{1}{\sqrt{p}}\nabla^{L^p\otimes E}_{e_{j_2}},\ldots, 
\left[\frac{1}{\sqrt{p}}\nabla^{L^p\otimes E}_{e_{j_k}}, 
\frac{1}{p}\Delta_{p;\alpha,y}\right]\ldots \right]\right]u, 
u^\prime \Big\rangle_{p,0}\right| \\
\leq C\|u\|_{p,1}\|u^\prime\|_{p,1}.
\end{multline}
\end{prop}

\begin{proof}
By  \eqref{Lta}, \eqref{LtaZ} and \eqref{Apy},
the operator $\frac{1}{p}\Delta_{p;\alpha,y}$ has the form
\begin{equation}\label{e:3.36}
\begin{split}
\frac{1}{p}\Delta_{p;\alpha,y}=\sum_{i,j}a^{ij}_{p;\alpha,y}(Z)
\Big(\frac{1}{\sqrt{p}}\nabla^{L^p\otimes E}_{e_i}\Big)
\Big(\frac{1}{\sqrt{p}}\nabla^{L^p\otimes E}_{e_j}\Big)\\
+\sum_{\ell}b^{\ell}_{p;\alpha,y}(Z)\frac{1}{\sqrt{p}}
\nabla^{L^p\otimes E}_{e_\ell}+c_{p;\alpha,y}(Z),
\end{split}
\end{equation}
where
\begin{align*}
a_{p;\alpha,y}^{ij}(Z)=&-g^{ij}(Z),\\
b_{p;\alpha,y}^\ell(Z)=&\frac{1}{\sqrt{p}}\sum_{j,k=1}^{2n} g^{jk}(Z)
\Gamma^\ell_{jk}(Z)-\frac{2\alpha}{\sqrt{p}} \sum_{j=1}^{2n}
g^{j\ell}(Z)e_j(\widetilde{d}_{p,y}),\\
c_{p;\alpha,y}(Z)=&-\tau(Z)-\frac{\alpha}{p}\sum_{j,k=1}^{2n}
g^{jk}(Z)\Big(e_j(e_k(\widetilde{d}_{p,y}))- \Gamma^{\ell}_{jk}(Z)
e_\ell(\widetilde{d}_{p,y})\Big)\\ 
&-\frac{\alpha^2}{p}\sum_{j,k=1}^{2n}g^{jk}(Z)e_j(\widetilde{d}_{p,y})
e_k(\widetilde{d}_{p,y}).
\end{align*}
It is easy to see that if $f_{p;\alpha,y}$ is $a_{p;\alpha,y}^{ij}$,
$b_{p;\alpha,y}^\ell$ or $c_{p;\alpha,y}$, the iterated commutator
\[
\left[\frac{1}{\sqrt{p}}\nabla^{L^p\otimes E}_{e_{j_1}},
\left[\frac{1}{\sqrt{p}}\nabla^{L^p\otimes E}_{e_{j_2}},\ldots,
\left[\frac{1}{\sqrt{p}}\nabla^{L^p\otimes E}_{e_{j_k}}, f_{p;\alpha,y}
\right]\ldots \right]\right]
\]
is a smooth function on $B^{T_{x_0}X}(0,\varepsilon)$ with sup-norm, 
uniformly bounded on $p\in \mathbb N^{*}$, $y\in X$, 
$|\alpha|<c\sqrt{p}$ and $x_0\in X$.

Recall the commutator relations
\begin{align}\label{e:3.37}
\left[\frac{1}{\sqrt{p}}\nabla^{L^p\otimes E}_{e_i}, 
\frac{1}{\sqrt{p}}\nabla^{L^p\otimes E}_{e_j}\right]
=\frac{1}{p}R^{L^p\otimes E}(e_i,e_j)=R^{L}(e_i,e_j)
+\frac{1}{p}R^{E}(e_i,e_j).
\end{align}

Using these facts, one can see that the 
iterated commutator \eqref{e:comm}
has the same structure as $\frac{1}{p}\Delta_{p;\alpha,y}$. 
This easily completes the proof.
\end{proof}

Now the proof of \eqref{e:local} is completed as in 
\cite[Theorem 4.10]{DLM04a} or \cite[Theorem 1.9]{ma-ma08}. For any 
$1\leq j_1 \leq \ldots \leq j_\ell\leq 2n$ with $\ell=1,\ldots, m$, 
we can write the operator
\begin{align}\label{e:3.38}
\Big(\frac{1}{\sqrt{p}}\nabla^{L^p\otimes E}_{e_{j_1}}\Big) 
\Big(\frac{1}{\sqrt{p}}\nabla^{L^p\otimes E}_{e_{j_2}}\Big)\ldots 
\Big(\frac{1}{\sqrt{p}}\nabla^{L^p\otimes E}_{e_{j_\ell}}\Big) 
(\lambda-\frac{1}{p}\Delta_{p;\alpha,y})^{-1}
\end{align}
as a linear combination of operators of the type
\begin{multline}\label{e:3.39}
\left[\frac{1}{\sqrt{p}}\nabla^{L^p\otimes E}_{e_{j_1}},
\left[\frac{1}{\sqrt{p}}\nabla^{L^p\otimes E}_{e_{j_2}},\ldots, 
\left[\frac{1}{\sqrt{p}}\nabla^{L^p\otimes E}_{e_{j_k}},
(\lambda-\frac{1}{p}\Delta_{p;\alpha,y})^{-1}\right]\ldots \right]\right]
\times \\ \times \Big(\frac{1}{\sqrt{p}}
\nabla^{L^p\otimes E}_{e_{j_{k+1}}}\Big)\ldots  
\Big(\frac{1}{\sqrt{p}}\nabla^{L^p\otimes E}_{e_{j_\ell}}\Big)
\end{multline}
and of the operator
\begin{align}\label{e:3.40}
\Big(\lambda-\frac{1}{p}\Delta_{p;\alpha,y}\Big)^{-1}
\Big(\frac{1}{\sqrt{p}}
\nabla^{L^p\otimes E}_{e_{j_1}}\Big) \Big(\frac{1}{\sqrt{p}}
\nabla^{L^p\otimes E}_{e_{j_2}}\Big)\ldots \Big(\frac{1}{\sqrt{p}}
\nabla^{L^p\otimes E}_{e_{j_\ell}}\Big).
\end{align}
Each commutator 
\[
\left[\frac{1}{\sqrt{p}}\nabla^{L^p\otimes E}_{e_{j_1}},
\left[\frac{1}{\sqrt{p}}\nabla^{L^p\otimes E}_{e_{j_2}},\ldots, 
\left[\frac{1}{\sqrt{p}}\nabla^{L^p\otimes E}_{e_{j_k}},
(\lambda-\frac{1}{p}\Delta_{p;\alpha,y})^{-1}\right]\ldots \right]\right]
\]
is a linear combination of operators of the form
\begin{equation}\label{e:RR}
\Big(\lambda-\frac{1}{p}\Delta_{p;\alpha,y}\Big)^{-1}R_1
\Big(\lambda-\frac{1}{p}\Delta_{p;\alpha,y}\Big)^{-1}R_2 \ldots
R_k\Big(\lambda-\frac{1}{p}\Delta_{p;\alpha,y}\Big)^{-1},
\end{equation}
where the operators $R_1,\ldots, R_k$ are of the form
\begin{align}\label{e:3.41}
\left[\frac{1}{\sqrt{p}}\nabla^{L^p\otimes E}_{e_{i_1}}, 
\left[\frac{1}{\sqrt{p}}\nabla^{L^p\otimes E}_{e_{i_2}},\ldots,
\left[\frac{1}{\sqrt{p}}\nabla^{L^p\otimes E}_{e_{i_l}},
\frac{1}{p}\Delta_{p;\alpha,y}\right]\ldots \right]\right].
\end{align}
By Proposition \ref{p:iterated}, each operator $R_j$ defines 
a bounded operator from $H^1$ to $H^{-1}$ with the norm, 
uniformly bounded on $p\in \mathbb N^{*}$, 
$|\alpha|<c\sqrt{p}$, $y\in X$. 
Therefore, by Theorem \ref{Thm1.7W}, each operator \eqref{e:RR} defines
a bounded operator from $L^2$ to $H^{1}$ with the norm, uniformly 
bounded on $p\in \mathbb N^{*}$, $p\geq p_0$, 
$|\alpha|<c\sqrt{p}$, $y\in X$. 
This immediately completes the proof.
 \end{proof}

 %%%%%%%%%%%%%%%%%%%%%%%%%%%%%%%%%%%
\subsection{Pointwise exponential estimates for the resolvents}
\label{norm-Bergman}
In this section, we derive the pointwise estimates for the Schwartz
kernel $R^{(m)}_{\lambda,p}(\cdot,\cdot)\in \mathscr{C}^{-\infty}
(X\times X, \pi_{1}^{*}(L^p\otimes E)\otimes 
\pi_{2}^{*}(L^p\otimes E)^*)$ 
of the operator $\big(\lambda-\frac{1}{p}\Delta_{p}\big)^{-m}$.
Recall that $p_{0}\in \mathbb N^{*}$, $c>0$ are given in 
Theorems \ref{Thm1.7}, \ref{Thm1.7W}.
\begin{thm}\label{p:west}
For any $m,k\in \mathbb N$ with $m>2n+k+1$, for any 
$p\in \mathbb N^{*}$, $p\geq p_0$, and $\lambda\in\delta$, we have 
$R^{(m)}_{\lambda,p}(\cdot,\cdot)\in 
\mathscr{C}^k(X\times X, \pi_1^*(L^p\otimes E)
\otimes \pi_2^*(L^p\otimes E)^*)$ 
and, for any $c_1\in (0,c)$, there exists $C_{m,k}>0$ such that
for any $p\in \mathbb N^{*}$, $p\geq p_0$, $\lambda\in\delta$, 
$x, x^\prime \in X$, we have
\begin{align}\label{e:3.43}
\big|R^{(m)}_{\lambda,p}(x, x^\prime)\big|_{C^k}
\leq C_{m,k} p^{n+\frac{k}{2}} e^{-c_1\sqrt{p} d (x, x^\prime)}.
\end{align}
\end{thm}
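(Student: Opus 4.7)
The overall strategy, following the general scheme of \cite{DLM04a, ma-ma08}, is to bound the pointwise $\mathscr{C}^k$-norm of the Schwartz kernel $R^{(m)}_{\lambda,p;\alpha,y}$ of the conjugated resolvent power $(\lambda-\frac{1}{p}\Delta_{p;\alpha,y})^{-m}$ uniformly for $|\alpha|<c\sqrt{p}$ and $y\in X$, and then use the conjugation identity \eqref{LaL} with $y=x'$ and $\alpha=c_1\sqrt{p}$ to extract the exponential decay factor.

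First, iterating Theorem~\ref{Thm1.9} gives a uniform operator bound $(\lambda-\frac{1}{p}\Delta_{p;\alpha,y})^{-m}\colon H^0\to H^m$. The adjoint of $\Delta_{p;\alpha,y}$ is $\Delta_{p;-\alpha,y}$ (since $f_{\alpha,p,y}$ is a real scalar and $\Delta_p$ is selfadjoint), and $|-\alpha|<c\sqrt{p}$ too, so the dual bound yields $(\lambda-\frac{1}{p}\Delta_{p;\alpha,y})^{-m}\colon H^{-m_1}\to H^0$ for any $m_1\le m$. Composing, one obtains that $(\lambda-\frac{1}{p}\Delta_{p;\alpha,y})^{-m}$ maps $H^{-m_1}$ to $H^{m_2}$ with uniform norm as soon as $m_1+m_2\le m$, $m_1,m_2\in\mathbb{N}$.

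Next, for $v\in(L^p\otimes E)_x$, $w\in(L^p\otimes E)_{x'}$ and multi-indices $|\alpha_1|+|\alpha_2|\le k$, one writes
\[
\langle \nabla_x^{\alpha_1}\nabla_{x'}^{\alpha_2}R^{(m)}_{\lambda,p;\alpha,y}(x,x')\,w,v\rangle
=\bigl\langle \bigl(\lambda-\tfrac{1}{p}\Delta_{p;\alpha,y}\bigr)^{-m}D_{x'}^{\alpha_2}\delta_{w},\, D_{x}^{\alpha_1,*}\delta_{v}\bigr\rangle,
\]
where $D^{\alpha_j}$ are the induced covariant differential operators. By the refined Sobolev embedding Proposition~\ref{p:Sobolev} applied by duality to the delta-sections and their derivatives (in the spirit of Proposition~\ref{p:delta}), one has $\|D_{x'}^{\alpha_2}\delta_w\|_{p,-m_1}\le C p^{(n+|\alpha_2|)/2}|w|$ whenever $m_1>n+|\alpha_2|$, and symmetrically $\|D_{x}^{\alpha_1,*}\delta_v\|_{p,-m_2}\le C p^{(n+|\alpha_1|)/2}|v|$ whenever $m_2>n+|\alpha_1|$. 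Choosing integer $m_1=n+|\alpha_2|+1$, $m_2=n+|\alpha_1|+1$, the constraint $m_1+m_2\le m$ becomes $m\ge 2n+|\alpha_1|+|\alpha_2|+2$, which is ensured by the hypothesis $m>2n+k+1$. Combining with the operator bound from the first step,
\[
\bigl|\nabla_x^{\alpha_1}\nabla_{x'}^{\alpha_2}R^{(m)}_{\lambda,p;\alpha,y}(x,x')\bigr|\le C_{m,k}\,p^{n+(|\alpha_1|+|\alpha_2|)/2},
\]
uniformly in $p\ge p_0$, $\lambda\in\delta$, $|\alpha|<c\sqrt{p}$ and $y\in X$.

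Finally, fix a pair $(x_0,x_0')\in X\times X$ and set $y=x_0'$, $\alpha=c_1\sqrt{p}$ with $c_1<c$. The identity \eqref{LaL} at the kernel level reads
\[
R^{(m)}_{\lambda,p}(x,x')=f_{\alpha,p,x_0'}^{-1}(x)\,R^{(m)}_{\lambda,p;\alpha,x_0'}(x,x')\,f_{\alpha,p,x_0'}(x').
\]
Differentiating the right-hand side by Leibniz and invoking \eqref{dist}, a Faà di Bruno computation gives $|\nabla^{\gamma} f_{\alpha,p,x_0'}^{\pm 1}(z)|\le C\,e^{\pm\alpha\widetilde{d}_p(z,x_0')}\,p^{|\gamma|/2}$, the scaling matching because $\widetilde{d}_p$ is smoothed exactly at scale $1/\sqrt{p}$ so each of its derivatives contributes precisely one factor $\sqrt{p}$. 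Evaluating at $(x_0,x_0')$, one has $\widetilde{d}_p(x_0',x_0')\le 1/\sqrt{p}$, so $f_{\alpha,p,x_0'}(x_0')$ and all its $x'$-derivatives are controlled by a bounded constant times $p^{|\beta|/2}$. Collecting all factors, every Leibniz term is bounded by $C\,p^{n+k/2}\,e^{-c_1\sqrt{p}\,\widetilde{d}_p(x_0,x_0')}$, and \eqref{(1.1)} replaces $\widetilde{d}_p$ by $d$ at the cost of a bounded constant, yielding \eqref{e:3.43}. The main technical obstacle is the bookkeeping of powers of $\sqrt{p}$ in the Leibniz expansion — the above matching between the scale of $\widetilde{d}_p$ and the rescaled Sobolev norms $\|\cdot\|_{p,m}$ is precisely what makes the various contributions combine into the clean bound $p^{n+k/2}$.
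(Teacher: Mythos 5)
Your proposal is correct and follows essentially the same route as the paper: weight conjugation via $f_{\alpha,p,y}$, iterated resolvent bounds from Theorem~\ref{Thm1.9} extended to negative orders by self-adjointness and duality, the refined Sobolev embedding (Proposition~\ref{p:Sobolev}) together with the delta-section estimate (Proposition~\ref{p:delta}), and finally the choice $y=x'$ and $\alpha=c_1\sqrt p$ to extract the exponential decay with the $\widetilde d_p(x',x')\le 1/\sqrt p$ control. The only cosmetic difference is that the paper keeps the exponential weight conjugated around the whole weighted combination $Q_{1;\alpha,p,x'}(\lambda-\tfrac1p\Delta_{p;\alpha,x'})^{-m}Q_{2;\alpha,p,x'}^*$ (using that the conjugated $Q_{i;\alpha,p,x'}$ remain uniformly bounded $BD^{k_i}$ families), whereas you do an explicit Leibniz/Faà di Bruno expansion of the derivatives of the weight at the two endpoints — both bookkeepings track precisely the same powers of $\sqrt p$.
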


\begin{proof}
By \eqref{LaL}, for any $m\in \mathbb N$, $p\in \mathbb N^{*}$, 
$p\geq p_0$, $\lambda\in\delta$, $y\in X$ and $|\alpha|<c\sqrt{p}$, 
we have
\begin{align}\label{e:3.44}
f_{\alpha,p,y} \Big(\lambda-\frac{1}{p}\Delta_p\Big)^{-m}
f^{-1}_{\alpha,p,y}=\Big(\lambda-\frac{1}{p}\Delta_{p;\alpha,y}
\Big)^{-m}.
\end{align}
As in Remark \ref{r:conj}, one can show that this formula gives a
well-defined operator from $\mathscr{C}^\infty_c(X,L^p\otimes E)$ 
to $\mathscr{C}^{-\infty}(X,L^p\otimes E)$.

Since $\Delta_p$ is formally self-adjoint with respect to $\|\cdot\|_{p,0}$,
we have $\Delta_{p;\alpha,y}^*=\Delta_{p;-\alpha,y}$. Using this 
fact and Theorem \ref{Thm1.9}, we easily get that, for any 
$m_1\in \mathbb Z$, there exists $C_{m,m_1}>0$ such that, 
for all $p\in \mathbb N^{*}$, $p\geq p_0$, $\lambda\in\delta$, $y\in X$ 
and $|\alpha|<c\sqrt{p}$, we have
\begin{equation}\label{res-est}
\left\|\Big(\lambda - \frac{1}{p}\Delta_{p;\alpha,y}\Big)^{-m}
\right\|^{m_1,m_1+m}_p\leq C_{m,m_1}.
\end{equation}

The Schwartz kernel $R^{(m)}_{\lambda,p;\alpha, y}(\cdot,\cdot)\in 
\mathscr{C}^k (X\times X, \pi^*_1(L^p\otimes E)\otimes
\pi_2^*(L^p\otimes E)^*)$ of the operator
$\big(\lambda-\frac{1}{p}\Delta_{p;\alpha,y}\big)^{-m}$
is related to the Schwartz kernel $R^{(m)}_{\lambda,p}(\cdot,\cdot)$ 
of the operator $\big(\lambda-\frac{1}{p}\Delta_{p}\big)^{-m}$
by the formula
\begin{align}\label{e:3.45}
R^{(m)}_{\lambda,p;\alpha, y}(x,x^\prime)
=e^{\alpha \widetilde{d}_{p,y}(x)}R^{(m)}_{\lambda,p}(x,x^\prime) 
e^{-\alpha \widetilde{d}_{p,y}(x^\prime)}, \quad x, x^\prime \in X.
\end{align}
For $x,x^\prime,y\in X$ and $v\in (L^p\otimes E)_{x^\prime}$, 
we can write
\begin{align}\label{e:3.46}
e^{\alpha \widetilde{d}_{p,y}(x)} R^{(m)}_{\lambda,p}(x,x^\prime) 
e^{-\alpha \widetilde{d}_{p,y}(x^\prime)}v=\Big(\Big(\lambda
-\frac{1}{p}\Delta_{p;\alpha,y}\Big)^{-m}\delta_v \Big)(x)
\in (L^p\otimes E)_{x}.
\end{align}
In particular, putting $y=x^\prime$, we get
for $x,x^\prime\in X$ and $v\in (L^p\otimes E)_{x^\prime}$,
\begin{align}\label{e:3.47}
e^{\alpha \widetilde{d}_{p,x^\prime}(x)} R^{(m)}_{\lambda,p}(x,x^\prime)
e^{-\alpha \widetilde{d}_{p,x^\prime}(x^\prime)}v
=\Big(\Big(\lambda-\frac{1}{p}\Delta_{p;\alpha,x^\prime}\Big)^{-m}
\delta_v \Big)(x)\in (L^p\otimes E)_{x}.
\end{align}
By \eqref{(1.1)}, it follows that, for $0<\alpha<c\sqrt{p}$ and 
$x^\prime\in X$, we have an estimate 
$e^{\alpha \widetilde{d}_{p,x^\prime}(x^\prime)}\leq e^c$.
It is in this place that we need to use the smoothed distance function 
$\widetilde{d}$, depending on $p$. Assuming $0<\alpha<c\sqrt{p}$, 
by Propositions \ref{p:Sobolev}, \ref{p:delta}, and \eqref{e:3.47}, we get,
for $m>2n+1$, that $R^{(m)}_{\lambda,p}(x,x^\prime)$
is continuous and
\begin{equation}\label{e:3.48}
\begin{split}
\sup_{x,x^\prime\in X} e^{\alpha \widetilde{d}_{p,x^\prime}(x)}
|R^{(m)}_{\lambda,p}(x,x^\prime)|
&\leq  e^c \sup_{v\in (L^p\otimes E)_{x^\prime}, |v|=1}
\left\|\Big(\lambda-\frac{1}{p}\Delta_{p;\alpha,x^\prime}\Big)^{-m}
\delta_v \right\|_{\mathscr{C}^0_b}\\[3pt]
&\leq  C_1p^{n/2} \sup_{v\in (L^p\otimes E)_{x^\prime}, |v|=1} 
\left\|\Big(\lambda-\frac{1}{p}\Delta_{p;\alpha,x^\prime}\Big)^{-m}
\delta_v \right\|_{p,n+1}\\[3pt]
&\leq  C_2p^{n/2} \sup_{v\in (L^p\otimes E)_{x^\prime}, |v|=1}
\left\|\delta_v \right\|_{p,n+1-m}\leq C_3p^{n}.
\end{split}
\end{equation}
Similarly, for any $Q_1\in BD^{k_1}(X,L^p\otimes E)$ and 
$Q_2\in BD^{k_2}(X,L^p\otimes E)$, $k_1+k_2=k$, by the argument after \eqref{e:3.16}, we get with 
$m>2n+k+1$,
\begin{align}\begin{split}\label{e:3.49}
\sup_{x,x^\prime\in X} e^{\alpha \widetilde{d}_{p,x^\prime}(x)}
| &(Q_{1}\otimes Q_{2})R^{(m)}_{\lambda,p}(x,x^\prime)|\\
%\begin{aligned}
&\leq  e^c \sup_{v\in (L^p\otimes E)_{x^\prime}, |v|=1}
\left\|F_{\alpha,p,x^\prime}Q_1\Big(\lambda-\frac{1}{p}\Delta_{p}
\Big)^{-m}Q_2^*F^{-1}_{\alpha,p,x^\prime}\delta_v 
\right\|_{\mathscr{C}^0_b}\\[3pt]
&=  e^c \sup_{v\in (L^p\otimes E)_{x^\prime}, |v|=1}
\left\|Q_{1;\alpha,p,x^\prime}\Big(\lambda-\frac{1}{p}
\Delta_{p;\alpha,x^\prime}\Big)^{-m}Q_{2;\alpha,p,x^\prime}^*
\delta_v \right\|_{\mathscr{C}^0_b}\\[3pt]
&\leq  C_1p^{n/2} \sup_{v\in (L^p\otimes E)_{x^\prime}, |v|=1} 
\left\|Q_{1;\alpha,p,x^\prime}\Big(\lambda-\frac{1}{p}
\Delta_{p;\alpha,x^\prime}\Big)^{-m}Q_{2;\alpha,p,x^\prime}^*
\delta_v \right\|_{p,n+1}
\end{split}\end{align}
\begin{align}\begin{split}\nonumber
&\hspace{30mm}\leq  C_2p^{(n+k_1)/2} 
\sup_{v\in (L^p\otimes E)_{x^\prime}, |v|=1} 
\left\|\Big(\lambda-\frac{1}{p}\Delta_{p;\alpha,x^\prime}\Big)^{-m}
Q_{2;\alpha,p,x^\prime}^*\delta_v \right\|_{p,n+k_1+1}\\[3pt]
&\hspace{30mm}\leq  C_3p^{(n+k_1)/2} 
\sup_{v\in (L^p\otimes E)_{x^\prime}, |v|=1} 
\left\|Q_{2;\alpha,p,x^\prime}^*\delta_v \right\|_{p,n-m+k_1+1}\\[3pt]
&\hspace{30mm}\leq  C_4p^{(n+k)/2} 
\sup_{v\in (L^p\otimes E)_{x^\prime}, |v|=1} 
\left\|\delta_v \right\|_{p,n-m+k+1}\leq C_5p^{n+k/2}.
\end{split}\end{align}
%\end{multline*}

For any $x_0\in X$, fix an orthonormal basis $e_1,\ldots,e_{2n}$ in 
$T_{x_0}X$. As above, we extend it to %an orthonormal
a frame $e_1,\ldots,e_{2n}$ on $B^{X}(0,\varepsilon)$ 
as constant vector fields on $T_{x_{0}}X$.
%using the parallel transport 
%with respect to $\nabla^{TX}$ along the curve $\gamma_Z$. 
One can show (see, for instance, \cite[Proposition 1.5]{Kor91}) 
that the norm $\|\cdot\|_{\mathscr{C}^k_b}$ on
$\mathscr{C}^k_b(X, L^p\otimes E)$ is equivalent uniformly on 
$p\in \mathbb N^{*}$ to the norm $\|\cdot\|^\prime_{\mathscr{C}^k_b}$ 
given for $ u\in \mathscr{C}^k_b(X, L^p\otimes E)$ by
\begin{align}\label{e:3.50}
\|u\|^\prime_{\mathscr{C}^k_b}=\sup_{x_0\in X}\sup_{0\leq \ell \leq k}
\left\| \nabla^{L^p\otimes E}_{e_{j_1}}\cdots 
\nabla^{L^p\otimes E}_{e_{j_\ell}}u(x_0)\right\|.
\end{align}
That is, there exists $C_k>0$ such that, for any $p\in \mathbb N^{*}$ 
and $u\in \mathscr{C}^k_b(X, L^p\otimes E)$, we have
\begin{equation}\label{Ckb-prime}
C_k^{-1}\|u\|_{\mathscr{C}^k_b}\leq \|u\|^\prime_{\mathscr{C}^k_b}
\leq C_k\|u\|_{\mathscr{C}^k_b}.
\end{equation}

Let $\phi\in \mathscr{C}^\infty_c(\mathbb R^{2n})$ be any function 
supported in the ball $B(0,\varepsilon)$ such that $\phi\equiv 1$ on 
$B(0,\varepsilon/2)$. Consider the function 
$\phi_{x_0}\in \mathscr{C}^\infty_c(B^{X}(x_0,\varepsilon))$, 
corresponding to $\phi$ under the isomorphisms 
$\mathscr{C}^\infty_c(B(0,\varepsilon))\cong 
\mathscr{C}^\infty_c(B^{T_{x_0}X}(0,\varepsilon))
\cong \mathscr{C}^\infty_c(B^{X}(x_0,\varepsilon))$
induced by the basis $e_1,\ldots,e_{2n}$ and the exponential map 
$\exp^X_{x_0}$. The family $\{\phi_{x_0}, x_0\in X\}$ is bounded in 
$\mathscr{C}^\infty_b(X)$.

By \eqref{Ckb-prime}, it follows that there exists $C_k>0$ such that,
for any $x,x^\prime\in X$, we have
\begin{align}\label{e:3.51}
|R^{(m)}_{\lambda,p}(x,x^\prime)|_{\mathscr{C}^k}
\leq  C_k\sup_{Q_1,Q_2} |(Q_{1}\otimes Q_{2})
R^{(m)}_{\lambda,p}(x,x^\prime)|,
\end{align}
with the supremum taken over all pairs $(Q_1,Q_2)$, where 
$Q_1\in BD^{k_1}(X,L^p\otimes E)$ and 
$Q_2\in BD^{k_2}(X,L^p\otimes E)$, $k_1+k_2\leq k$, have the form
\begin{align}\label{e:3.52}
Q_1=\nabla^{L^p\otimes E}_{e_{i_1}}\cdots 
\nabla^{L^p\otimes E}_{e_{i_{k_1}}}\phi_x,\quad
Q_2=\nabla^{L^p\otimes E}_{e_{j_1}}\cdots 
\nabla^{L^p\otimes E}_{e_{j_{k_2}}}\phi_{x^\prime},
\end{align}
and $i_1,\ldots, i_{k_1},j_1,\ldots, j_{k_2}\in \{1,\ldots,2n\}$, 
that immediately completes the proof of \eqref{e:3.43}.
\end{proof}

\subsection{Proof of Theorems \ref{t:mainPp} and~\ref{t:covering}}
In this section, we complete the proofs of Theorems \ref{t:mainPp} 
and~\ref{t:covering}.

\begin{proof}[Proof of Theorem \ref{t:mainPp}]
Let $k\in \mathbb N$. Take an arbitrary $m>2n+k+1$. 
By \eqref{bergman-integral}, we have
\begin{equation}\label{bergman-integral1}
P_{p}(x,x^\prime)=\frac{1}{2\pi i}
\int_\delta \lambda^{m-1}R^{(m)}_{\lambda,p}(x, x^\prime)\,d\lambda.
\end{equation}
By Theorem~\ref{p:west}, it implies immediately the 
$\mathscr{C}^k$-estimate in Theorem \ref{t:mainPp} with $c>0$ 
given by that theorem.
\end{proof}

\begin{proof}[Proof of Theorem \ref{t:covering}] First, we observe that, 
	for $\widetilde \tau\in \mathscr{C}^\infty(\widetilde X)$ and
	$\tau\in \mathscr{C}^\infty(X)$ given by \eqref{tau}, we have 
	$\widetilde \tau=\pi^*\tau$. Next, the quantities $\widetilde\mu_0$ 
	and $\mu_0$ defined by \eqref{mu} are equal. In particular, 
	$\widetilde\mu_0>0$.
By Theorem~\ref{t:gap0}, there exists constant $C_L>0$ such that
for any $p\in \mathbb N^{*}$
 \begin{align}\label{e:3.55}
 \sigma(\Delta_p)\cup \sigma(\widetilde \Delta_p)\subset [-C_L,C_L]
 \cup [2p\mu_0-C_L,+\infty).
 \end{align}
Recall that $\delta$ denotes the counterclockwise oriented circle in 
$\mathbb C$ centered at $0$ of radius $\mu_0$.

For any $p,m\in \mathbb N$, $p\geq p_0$, and $\lambda\in\delta$, 
denote by $\widetilde R^{(m)}_{\lambda,p}\in 
\mathscr{C}^{-\infty}(\widetilde X\times \widetilde X, \pi_1^*
(\widetilde L^p\otimes \widetilde E)\otimes
\pi_2^*(\widetilde L^p\otimes \widetilde E)^*)$ 
and $R^{(m)}_{\lambda,p}\in \mathscr{C}^{-\infty}(X\times X, 
\pi_1^*(L^p\otimes E)\otimes \pi_2^*(L^p\otimes E)^*)$
the Schwartz kernels  of the operators 
$\big(\lambda-\frac{1}{p}\widetilde \Delta_{p}\big)^{-m}$ 
and $\big(\lambda-\frac{1}{p}\Delta_{p}\big)^{-m}$, respectively.
Recall that, for any $x^\prime$, they satisfy the identities
\begin{equation}\label{delta-eq0}
\Big(\lambda-\frac{1}{p}\widetilde \Delta_{p}\Big)^{m}
\widetilde R^{(m)}_{\lambda,p}(\cdot,x^\prime)
=\delta_{x^\prime},\quad \Big(\lambda-\frac{1}{p}\Delta_{p}\Big)^{m}
R^{(m)}_{\lambda,p}(\cdot,x^\prime)=\delta_{x^\prime}.
\end{equation}
Moreover, $u=R^{(m)}_{\lambda,p}(\cdot,x^\prime)$ is the unique 
distributional solution of the equation
\begin{equation}\label{delta-eq}
 \Big(\lambda-\frac{1}{p}\Delta_{p}\Big)^{m}u=\delta_{x^\prime}.
\end{equation}

Let $m>2n+1$. Then, by elliptic regularity and Sobolev embedding theorem, 
$\widetilde R^{(m)}_{\lambda,p}$ and $R^{(m)}_{\lambda,p}$ 
are continuous. 
We claim that there exists $p_1\in\mathbb N$ such that for any 
$p>p_1$ and $x,x^\prime\in \widetilde X$,
\begin{equation}\label{av-res}
\sum_{\gamma\in \Gamma}\widetilde
R^{(m)}_{\lambda,p}(\gamma x,x^\prime) 
= R^{(m)}_{\lambda,p}(\pi(x), \pi(x^\prime)).
\end{equation}

By \cite{Milnor}, there exists $K>0$ such that
$\sum_{\gamma\in \Gamma}e^{-ad(\gamma x, x^\prime)}<+\infty$
for any 
$a>K$ and $x,x^\prime\in \widetilde X$. Put $p_1\geq K^2/c^2+p_0$. 
Then, by Theorem~\ref{p:west}, for any $p>p_1$ and
$x,x^\prime\in \widetilde X$, the series in 
the left-hand side of \eqref{av-res} 
is absolutely convergent with respect to $\mathscr{C}^0$-norm 
and its sum
\begin{align}\label{e:3.56}
\widetilde S^{(m)}_{\lambda,p}(x,x^\prime)
=\sum_{\gamma\in \Gamma}\widetilde R^{(m)}_{\lambda,p}
(\gamma x,x^\prime)
\end{align}
is a $\Gamma$-invariant continuous section on 
$\widetilde X\times \widetilde X$. 
So we can write 
\begin{align}\label{e:3.57}
	\widetilde S^{(m)}_{\lambda,p}(x,x^\prime)
=S^{(m)}_{\lambda,p}(\pi(x), \pi(x^\prime))
\text{   with } S^{(m)}_{\lambda,p}\in \mathscr{C}^0
(X\times X, \pi_1^*(L^p\otimes E)\otimes \pi_2^*(L^p\otimes E)^*).
\end{align}

Moreover, by \eqref{delta-eq0}, for any $x^\prime \in \widetilde X$, 
$\widetilde S^{(m)}_{\lambda,p}(\cdot,x^\prime)$ satisfies the identity
\begin{equation}\label{e:3.58}
\Big(\lambda-\frac{1}{p}\widetilde \Delta_{p}\Big)^{m} \,
\widetilde S^{(m)}_{\lambda,p}(\cdot,x^\prime)=\delta_{x^\prime}.
\end{equation}
Therefore, for any $x^\prime\in X$, 
$S^{(m)}_{\lambda,p}(\cdot,x^\prime)$ satisfies the identity
\begin{equation}\label{e:3.59}
\Big(\lambda-\frac{1}{p}\Delta_{p}\Big)^{m}\,
S^{(m)}_{\lambda,p}(\cdot,x^\prime)=\delta_{x^\prime}.
\end{equation}
By the uniqueness of the solution of \eqref{delta-eq}, it follows that 
$S^{(m)}_{\lambda,p}=R^{(m)}_{\lambda,p}$. This completes the 
proof of \eqref{av-res}.

Since, by Theorem~\ref{p:west}, the series in the left-hand side of 
\eqref{av-res} is absolutely convergent with respect to the
$\mathscr{C}^0$-norm uniformly in $\lambda\in \delta$, we can integrate
it term by term over $\delta$. Using \eqref{bergman-integral1} and 
\eqref{av-res},
for any $p>p_1$ and $x,x^\prime\in \widetilde X$, we get
\begin{equation}\label{e:3.61}
\begin{split}
\sum_{\gamma\in \Gamma}\widetilde P_p(\gamma x,x^\prime) 
&=\frac{1}{2\pi i}\sum_{\gamma\in \Gamma} \int_\delta \lambda^{m-1} 
\widetilde R^{(m)}_{\lambda,p}(\gamma x, x^\prime)\,d\lambda\\ 
&= \frac{1}{2\pi i}
\int_\delta \lambda^{m-1}R^{(m)}_{\lambda,p}(\pi(x), \pi(x^\prime))\,
d\lambda= P_p(\pi(x), \pi(x^\prime)).
\end{split}
\end{equation}
The proof of Theorem \ref{t:covering} is completed.
\end{proof}

%%%%%%%%%%%%%%%%%%%%%%%%%%%%%%%%%%%%%%
%%%%%%%%%%%%%%%%%%%%%%%%%%%%%%%%%
\section{Full off-diagonal asymptotic expansion} \label{expansions}
In this section, we study the full off-diagonal asymptotic expansion
in the geometric situation of our article described in the Introduction.

In the case of a compact K\"ahler manifold the asymptotic 
expansion of the Bergman kernel 
$P_p(x,x)$ restricted to the diagonal was 
initiated by Tian \cite{Tian}, who proved the expansion up to first order.
Catlin \cite{Catlin} and Zelditch \cite{Zelditch98} proved the asymptotic
expansion of $P_p(x,x)$ up to arbitrary order, see \cite{MM07} for the 
numerous applications of these results.  

On the other hand, the off-diagonal expansion of the Bergman kernel has
many applications.
In the case of complex manifolds the expansion of $P_p(x,x')$
holds in a fixed neighborhood of the diagonal (independent of $p$),
see \cite{DLM04a}, \cite[Theorem 4.2.1]{MM07}. Such kind of expansion
is called full off-diagonal expansion.
As already noted in \cite[Problem 6.1, p.\ 292]{MM07}
the proof of the full off-diagonal expansion holds also for
complex manifolds with bounded geometry.

In the case of the Bergman kernel associated to the
renormalized Bochner-Laplacian considered
in the present article, it was shown
in \cite[Theorem 1.19]{ma-ma08} that the off-diagonal expansion holds in
a neighborhood of size $1/\sqrt{p}$ of the diagonal. This is called
near off-diagonal expansion. 
Moreover, it was shown in \cite[p.\,329]{MM07}
that the Bergman kernel is $\mathcal{O}(p^{-\infty})$ outside 
a neighborhood of size $p^{-\theta}$, for any $\theta\in(0,1/2)$.
These estimates are used in the proof of the Kodaira embedding theorem
for symplectic manifolds \cite[Theorem 3.6]{ma-ma08}.
In \cite{lu-ma-ma} a less precise estimate than
\cite[Theorem 1.19]{ma-ma08}
was obtained in a neighborhood
of size $p^{-\theta}$, $\theta\in(0,1/2)$, which is however enough to
derive the Berezin-Toeplitz quantization for the quantum spaced $\mH_p$
in \cite{ioos-lu-ma-ma}.
Finally, in \cite{bergman} the full off-diagonal expansion
for the Bergman kernel associated to the renormalized
Bochner-Laplacian was proved by combining \cite[\S 1]{ma-ma08}
and weight function trick in \cite{Kor91}.

In this section we extend the results on the full off-diagonal asymptotic 
expansion to the case of manifolds of bounded geometry. Moreover, 
using the technique of weighted estimates of the previous sections, 
we slightly improve the remainder in the asymptotic expansions. 
We will keep the setting described in Introduction.

Consider the fiberwise product $TX\times_X TX=\{(Z,Z^\prime)
\in T_{x_0}X\times T_{x_0}X : x_0\in X\}$. Let 
$\pi : TX\times_X TX\to X$ be the natural projection given by  
$\pi(Z,Z^\prime)=x_0$. The kernel $P_{q,p}(x,x^\prime)$
(of the operator $(\Delta_{p})^{q} P_{\mathcal{H}_{p}}$) induces 
a smooth section $P_{q,p,x_0}(Z,Z^\prime)$ of the vector bundle 
$\pi^*(\operatorname{End}(E))$ on $TX\times_X TX$ defined for all
$x_0\in X$ and $Z,Z^\prime\in T_{x_0}X$ with $|Z|, |Z^\prime|<a^X$.

We will follow the arguments of \cite{bergman} and \cite{ma-ma08}. 
We will use the normal coordinates near an arbitrary point $x_0\in X$
introduced in the proof of Theorem~\ref{Thm1.9}. Let $\{e_j\}$ be 
an oriented orthonormal basis of $T_{x_0}X$. It gives rise to an
isomorphism $X_0:=\mathbb R^{2n}\cong T_{x_0}X$.  

Consider the trivial bundles $L_0$ and $E_0$ with fibers $L_{x_0}$ 
and $E_{x_0}$ on $X_0$. Recall that we have the Riemannian metric 
$g^{TX}$ on $B^{T_{x_0}X}(0,a^X)$ as well as the connections 
$\nabla^L$, $\nabla^E$ and the Hermitian metrics $h^L$, $h^E$ 
on the restrictions of $L_0$, $E_0$ to $B^{T_{x_0}X}(0,a^X)$
induced by the identification $B^{T_{x_0}X}(0,a^X)\cong B^{X}(x_0,a^X)$.
In particular, $h^L$, $h^E$ are the constant metrics 
$h^{L_0}=h^{L_{x_0}}$, $h^{E_0}=h^{E_{x_0}}$. 
For $\varepsilon \in (0,a^X/4)$, one can extend these geometric objects
from $B^{T_{x_0}X}(0,\varepsilon)$ to $X_0\cong T_{x_0}X$
in the following way.

Let $\rho : \mathbb R\to [0,1]$ be a smooth even function such
that $\rho(v)=1$ if $|v|<2$ and $\rho(v)=0$ if $|v|>4$. 
Let $\varphi_\varepsilon : \mathbb R^{2n}\to \mathbb R^{2n}$ 
be the map defined by $\varphi_\varepsilon(Z)=\rho(|Z|/\varepsilon)Z$. 
We equip $X_0$ with the metric $g^{TX_0}(Z)=g(\varphi_\varepsilon(Z))$. 
Set $\nabla^{E_0}=\varphi^*_\varepsilon\nabla^E$. Define  the Hermitian 
connection $\nabla^{L_0}$ on $(L_0,h^{L_0})$
by (cf.\  \cite[(4.23)]{DLM04a}, \cite[(1.21)]{ma-ma08})
\begin{align}\label{e:4.1}
\nabla^{L_0}\left|_Z\right.=\varphi_\varepsilon^*\nabla^L
+\frac 12(1-\rho^2(|Z|/\varepsilon))R^L_{x_0}(\mathcal R,\cdot),
\end{align}
where $\mathcal R(Z)=\sum_j Z_je_j\in \mathbb R^{2n}\cong T_ZX_0$.

By \cite[(4.24)]{DLM04a}, \cite[(1.22)]{ma-ma08},
if $\varepsilon$ is small enough, then the curvature $R^{L_0}$
is positive and satisfies the following estimate for any $x_0\in X$,
\begin{align}\label{e:4.2}
\inf_{u\in T_{x_0}X\setminus\{0\}}\frac{iR^{L_0}(u,J^{L_0}u)}
{|u|_{g^TX}^2}\geq \frac{4}{5}\mu_0.
\end{align}

Here $J^{L_0}$ is the almost complex structure on $X_0$ defined by 
$g^{TX_0}$ and $iR^{L_0}$. From now on, we fix such an $\varepsilon>0$.

%We also extend the function $\kappa$ from 
%$B^{T_{x_0}X}(0,\varepsilon)$ to $X_0$. 
Let $dv_{TX}$ be the Riemannian volume form of 
$(T_{x_0}X, g^{T_{x_0}X})$. Let $\kappa$ be the smooth positive
function on $X_0$ defined by the equation
\begin{equation}\label{e:kappa}
dv_{X_0}(Z)=\kappa(Z)dv_{TX}(Z), \quad Z\in X_0.
\end{equation}

Let $\Delta_p^{X_0}=\Delta^{L_0^p\otimes E_0}-p\tau_0$ be
the associated renormalized Bochner-Laplacian acting on 
$\mathscr{C}^\infty(X_0,L_0^p\otimes E_0)$. Then
(cf.\  \cite[(1.23)]{ma-ma08}) there exists $C_{L_0}>0$
such that for any $p$
 \begin{equation}\label{gap0}
 \sigma(\Delta_p^{X_0})\subset [-C_{L_0},C_{L_0}]\cup 
 \left[\frac{8}{5}p\mu_0-C_{L_0},+\infty\right).
 \end{equation}

Consider the subspace $\mathcal H^0_p$ in
$\mathscr{C}^\infty(X_0,L_0^p\otimes E_0)\cong
\mathscr{C}^\infty(\mathbb R^{2n}, E_{x_0})$
spanned by the eigensections of $\Delta^{X_0}_p$
corresponding to eigenvalues in
$[-C_{L_0},C_{L_0}]$. Let $P_{\mathcal H^0_p}$
be the orthogonal projection onto
$\mathcal H^0_p$. The smooth kernel of
$(\Delta^{X_0}_p)^qP_{\mathcal H^0_p}$
with respect to the Riemannian volume form $dv_{X_0}$ is denoted by
$P^0_{q,p}(Z,Z^\prime)$. As proved in \cite[Proposition 1.3]{ma-ma08},
the kernels $P_{q,p,\,x_0}(Z,Z^\prime)$ and $P^0_{q,p}(Z,Z^\prime)$
are asymptotically close on $B^{T_{x_0}X}(0,\varepsilon)$ in the
$\mathscr{C}^\infty$-topology, as $p\to \infty$. 

In the next theorem,
we improve the $\mathcal O(p^{-\infty})$-estimate for the norms 
of the difference between $P_{q,p,x_0}(Z,Z^\prime)$ and
$P^0_{q,p}(Z,Z^\prime)$ of \cite[Proposition 1.3]{ma-ma08}, 
proving an exponential decay estimate. This is essentially the main 
new result of this section.

\begin{thm}\label{prop13}
There exists $c_0>0$ such that, for any $k\in \mathbb N$,
there exists $C_{k}>0$ such that for $p\in \mathbb N^{*}$, $x_0\in X$
and $Z,Z^\prime\in B^{X_0}(0,\varepsilon)$,
 \begin{align}\label{e:4.5}
\big|P_{q,p,x_0}(Z,Z^\prime)-P^0_{q,p}(Z,Z^\prime)
\big|_{\mathscr{C}^k}\leq   C_{k}e^{-c_0\sqrt{p}}.
\end{align}
 \end{thm}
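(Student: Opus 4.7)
The plan is to mimic the parametrix-plus-weighted-estimates argument used for Theorem~\ref{t:mainPp}, this time comparing the Bergman-type kernels on the two spaces $X$ and $X_0$. The crucial geometric observation is that, since $\varphi_\varepsilon = \mathrm{id}$ on $B^{T_{x_0}X}(0,2\varepsilon)$, the operators $\Delta_p$ (pulled back via $\exp^X_{x_0}$) and $\Delta^{X_0}_p$ coincide on $B^{X_0}(0,2\varepsilon)$. Moreover, the machinery of Sections~\ref{norm}--\ref{maintheorem} applies verbatim to $X_0$: the extension through $\varphi_\varepsilon$ produces data of bounded geometry on $X_0$, while \eqref{e:4.2} plays the role of \eqref{mu}.

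The first step is to express both $(\Delta_p)^q P_{\mathcal H_p}$ and $(\Delta^{X_0}_p)^q P_{\mathcal H_p^0}$ as contour integrals of resolvent powers by combining \eqref{bergman-integral} with the algebraic identity $\Delta_p^q = p^q\sum_{j=0}^q\binom{q}{j}(-1)^j\lambda^{q-j}(\lambda-\tfrac1p\Delta_p)^j$ and its $X_0$ analog. Taking $m$ sufficiently large (depending on $k$ and $q$), statement \eqref{e:4.5} reduces to proving
\[
\bigl|R^{(m)}_{\lambda,p}(Z,Z') - R^{(m),X_0}_{\lambda,p}(Z,Z')\bigr|_{\mathscr C^k} \le C_{m,k}\,e^{-c_0\sqrt p}
\]
uniformly in $\lambda\in\delta$ and $Z,Z'\in B^{X_0}(0,\varepsilon)$, where $R^{(m),X_0}_{\lambda,p}$ denotes the Schwartz kernel of $(\lambda-\tfrac1p\Delta^{X_0}_p)^{-m}$ and $B^{X_0}(0,\varepsilon)$ is identified with $B^X(x_0,\varepsilon)$ via $\exp^X_{x_0}$.

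To establish this bound, choose $\phi\in\mathscr{C}^\infty_c(B^{X_0}(0,5\varepsilon/3))$ with $\phi\equiv 1$ on $B^{X_0}(0,4\varepsilon/3)$ and form the local parametrix $E(\cdot,Z'):=\phi\, R^{(m),X_0}_{\lambda,p}(\cdot,Z')$, which is supported in $B^{X_0}(0,2\varepsilon)$. Since $\Delta_p=\Delta^{X_0}_p$ on this set, the identity $(\lambda-\tfrac1p\Delta_p)^{m}E(\cdot,Z')=\delta_{Z'}+G(\cdot,Z')$ holds, where $G(\cdot,Z')$ is a distribution supported in the annulus $A_\varepsilon:=B^{X_0}(0,5\varepsilon/3)\setminus B^{X_0}(0,4\varepsilon/3)$, arising from the commutators of $(\lambda-\tfrac1p\Delta^{X_0}_p)^{m}$ with $\phi$ applied to $R^{(m),X_0}_{\lambda,p}(\cdot,Z')$. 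Applying $R^{(m)}_{\lambda,p}$, using $\phi(Z)=1$ on $B^{X_0}(0,\varepsilon)$, and integrating by parts in $W$, one gets
\[
R^{(m)}_{\lambda,p}(Z,Z')-R^{(m),X_0}_{\lambda,p}(Z,Z')=-\sum_{j}\int_{A_\varepsilon}\bigl(D^{1,j}_W R^{(m)}_{\lambda,p}\bigr)(Z,W)\,\bigl(D^{2,j}_W R^{(m),X_0}_{\lambda,p}\bigr)(W,Z')\,dv_{X_0}(W),
\]
where each $D^{1,j},D^{2,j}$ is a differential operator in $W$ of order $<2m$ with coefficients supported in $A_\varepsilon$.

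Since $d(Z,W),d(W,Z')\ge\varepsilon/3$ for $Z,Z'\in B^{X_0}(0,\varepsilon)$ and $W\in A_\varepsilon$, Theorem~\ref{p:west} and its $X_0$-analog, applied with $m$ large enough, bound each factor in the integrand by $C p^N e^{-c_1\sqrt p\,\varepsilon/3}$ for any $c_1\in(0,c)$ and some $N=N(m,k)$. The product therefore decays like $e^{-2c_1\sqrt p\,\varepsilon/3}$; integrating over the bounded annulus $A_\varepsilon$ and over $\lambda\in\delta$, and absorbing $p^{2N}$ into the exponential by slightly decreasing $c_1$, yields \eqref{e:4.5} with $c_0<2c_1\varepsilon/3$ in the $\mathscr C^0$ norm, and higher $\mathscr C^k$-seminorms follow by differentiating in $Z,Z'$ and applying the full $\mathscr C^k$-statement of Theorem~\ref{p:west}. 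The main obstacle is the bookkeeping of the commutator expansion of $[(\lambda-\tfrac1p\Delta^{X_0}_p)^m,\phi]$; however, each iterated commutator has the same structural form as those analyzed in Proposition~\ref{p:iterated}, so the weighted resolvent estimates of Section~\ref{s3.3} control every resulting term uniformly in $p$.
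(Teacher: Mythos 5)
Your structural idea—cut off $R^{X_0,(m)}_{\lambda,p}(\cdot,Z')$ by a function $\phi$ supported where $\Delta_p$ and $\Delta^{X_0}_p$ coincide, observe that $(\lambda-\tfrac1p\Delta_p)^m(\phi R^{X_0,(m)}_{\lambda,p}(\cdot,Z'))=\delta_{Z'}+G(\cdot,Z')$ with $G$ supported in an annulus away from $Z,Z'$, and invert with $R^{(m)}_{\lambda,p}$—is essentially the same decomposition the paper performs via \eqref{e:4.11}--\eqref{e:4.12}. What differs is the estimation step: you propose to bound the error by multiplying the pointwise off-diagonal exponential bounds of Theorem~\ref{p:west} for the two resolvent kernels and integrating over the fixed annulus, whereas the paper conjugates by the compactly supported weight $\phi^{X_0}_{\alpha,W}$ of \eqref{e:4.7}, invokes the weighted resolvent bounds of Theorem~\ref{Thm1.9a}, and finishes with delta-sections and Sobolev embedding, exactly as in Section~\ref{norm-Bergman}.

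The gap in your proposal is the derivative bookkeeping. After integrating by parts you allow $D^{1,j},D^{2,j}$ to have order up to $2m-1$ and then claim Theorem~\ref{p:west} ``applied with $m$ large enough'' controls each factor. But Theorem~\ref{p:west} gives a $\mathscr C^k$-bound on $R^{(m)}_{\lambda,p}$ only under $m>2n+k+1$; if the derivative order applied to $R^{(m)}_{\lambda,p}$ grows like $2m$, this constraint can never be met, however large you take $m$. To rescue the pointwise strategy, one must first exploit that $\Delta_p=\Delta^{X_0}_p$ on a neighbourhood of the annulus $A_\varepsilon$ to push the $(\lambda-\tfrac1p\Delta^{X_0}_p)^i$-factors of the commutator into the resolvent powers, reducing each term to an integral of $R^{(m-i)}_{\lambda,p}(Z,W)$ against $\bigl([\lambda-\tfrac1p\Delta^{X_0}_p,\phi]R^{X_0,(i+1)}_{\lambda,p}(\cdot,Z')\bigr)(W)$, carrying only one genuine derivative. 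Even so, for small $i$ (respectively small $m-i$) the intermediate kernel $R^{X_0,(i+1)}_{\lambda,p}$ (respectively $R^{(m-i)}_{\lambda,p}$) has resolvent power too low for Theorem~\ref{p:west}—indeed $R^{X_0,(i+1)}_{\lambda,p}(\cdot,Z')$ need not even be continuous when $i<n$—so those terms still require the weighted Sobolev / delta-section machinery rather than a pointwise product bound. Your closing remark that Proposition~\ref{p:iterated} and the weighted estimates of Section~\ref{s3.3} ``control every resulting term'' conflates the two frameworks: they produce Sobolev-norm bounds, not the pointwise $\mathscr C^k$-bounds your annulus integral is quoting. So the proposal has the right skeleton but the convergence of the two exponential factors is not actually justified for all terms of the commutator expansion; to close it you would in effect be reproducing the paper's weighted argument.
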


\begin{proof}
As in Section~\ref{norm-Bergman}, $R^{(m)}_{\lambda,p}\in 
\mathscr{C}^{-\infty}(X\times X, \pi_1^*(L^p\otimes E)\otimes 
\pi_2^*(L^p\otimes E)^*)$ denotes the Schwartz kernel of the operator
$\Big(\lambda-\frac{1}{p}\Delta_{p}\Big)^{-m}$. We also denote by 
\begin{align}\label{e:4.6}
	R^{X_0,(m)}_{\lambda,p}\in \mathscr{C}^{-\infty}(X_0\times X_0,
\pi_1^*(L_0^p\otimes E_0)\otimes \pi_2^*(L^p_0\otimes E_0)^*)
\end{align}
the Schwartz kernel of the operator $\Big(\lambda-\frac{1}{p}
\Delta^{X_0}_{p}\Big)^{-m}$. For $m>2n+1$, the distributional sections 
$R^{(m)}_{\lambda,p}(x,x^\prime)$ and 
$R^{X_0,(m)}_{\lambda,p}(Z,Z^\prime)$ are continuous sections.

Recall that $d$ denotes the geodesic distance on $X$ and $d^{X_0}$ 
the geodesic distance on $X_0$. By construction, $d$ coincides with 
$d^{X_0}$ on $B^X(x_0, 2\varepsilon)\times B^{X}(x_0, 2\varepsilon)
\cong B^{X_0}(0, 2\varepsilon)\times B^{X_0}(0, 2\varepsilon)$. 
Let $\psi \in \mathscr{C}^\infty_c(B^{X_0}(0,4\varepsilon))$, 
$\psi\equiv 1$ on $B^{X_0}(0,3\varepsilon)$.

Consider a function $\chi\in \mathscr{C}^\infty(\mathbb R)$ such that 
$\chi(r)=1$ for $|r|\leq \varepsilon$  and $\chi(r)=0$ for 
$|r|\geq 2\varepsilon$.
For any $\alpha\in \mathbb R$ and $W\in X_0$, we introduce
a weight function $\phi^{X_0}_{\alpha,W} \in \mathscr{C}^{\infty}(X_0)$
by
\begin{align}\label{e:4.7}
\phi^{X_0}_{\alpha,W}(Z) = \exp \left[{\alpha 
\chi(d ^{X_0}(Z,W))}\right],\quad Z \in X_0.
\end{align}

Consider the operators
\begin{align}\label{e:4.8}
\Delta^{X_0}_{p;\alpha,W}:=\phi^{X_0}_{\alpha,W}
\Delta^{X_0}_{p}(\phi^{X_0}_{\alpha,W})^{-1}.
\end{align}

With the same arguments as in Theorem \ref{Thm1.9}, 
one can show that

\begin{thm}\label{Thm1.9a}
There exist $c_0>0$ and $p_0\in \mathbb N^{*}$ such that, for any 
$p\in \mathbb N^{*}$, $p\geq p_0$, $\lambda\in \delta$, $W\in X_0$ 
and $|\alpha|<c_0\sqrt{p}$,
the operator $\lambda-\frac{1}{p}\Delta^{X_0}_{p;\alpha,W}$
is invertible in $L^2$. Moreover,  for any $m\in \mathbb N$, 
the resolvent 
$\big(\lambda-\frac{1}{p}\Delta^{X_0}_{p;\alpha,W}\big)^{-1}$ 
maps $H^m$ to $H^{m+1}$ with the following norm estimates:
\begin{equation}\label{e:mm+1-again}
\left\|\Big(\lambda-\frac{1}{p}\Delta^{X_0}_{p;\alpha,W}\Big)^{-1}
\right\|_{p}^{m,m+1}\leq C,
\end{equation}
where $C>0$ is independent of $p\in \mathbb N^{*}$, $p\geq p_0$, 
$\lambda\in \delta$, $W\in X_0$ and $x_0\in X$.
\end{thm}

For any $Z,W\in B^{X_0}(0,\varepsilon/2)$, we have 
$d^{X_0}(Z,W)<\varepsilon$ and, therefore, 
$\phi^{X_0}_{\alpha,W}(Z) =e^{\alpha}$.
For $m>2n+1$, we have
\begin{equation}\label{e:diff}
\begin{split}
&\sup_{Z,W\in B^{X_0}(0,\varepsilon/2)}  |R^{(m)}_{\lambda,p}(Z,W)
-R^{X_0, (m)}_{\lambda,p}(Z,W)| \\
&\leq e^{-\frac 12c_0\sqrt{p}}\sup_{Z,W\in B^{X_0}(0,\varepsilon/2)} 
\left|\phi_{\frac 12c_0\sqrt{p},W}^{X_0}(Z) \Big(\psi(Z)
R^{(m)}_{\lambda,p}(Z,W)-R^{X_0, (m)}_{\lambda,p}(Z,W)\psi(W)\Big)
\right|\\
&\leq e^{-\frac 12c_0\sqrt{p}} 
\sup_{\substack{W\in B^{X_0}(0,\varepsilon/2),\\
v\in (L_0^p\otimes E_0)_W, |v|=1}} 
\left\|\phi_{\frac 12c_0\sqrt{p},W}^{X_0} 
\Big(\psi \Big(\lambda-\frac{1}{p}\Delta_{p}\Big)^{-m}
-\Big(\lambda-\frac{1}{p}\Delta^{X_0}_{p}\Big)^{-m}\psi\Big)
\delta_v \right\|_{\mathscr{C}^0_b}\\ 
&\leq Cp^{n/2} 
e^{-\frac 12c_0\sqrt{p}}
\sup_{\substack{W\in B^{X_0}(0,\varepsilon/2),\\ 
v\in (L_0^p\otimes E_0)_W, |v|=1}} 
\left\|\phi_{\frac 12c_0\sqrt{p},W}^{X_0}
\Big(\psi \Big(\lambda-\frac{1}{p}\Delta_{p}\Big)^{-m}
-\Big(\lambda-\frac{1}{p}\Delta^{X_0}_{p}\Big)^{-m}\psi\Big)
\delta_v \right\|_{p,n+1}.
\end{split}
\end{equation}
By construction, we have for any 
$u \in \mathscr{C}^\infty_c(B^{X_0}(0,2\varepsilon),E_{x_0})$,
\begin{align}\label{e:4.10}
\Delta_pu(Z)=\Delta^{X_0}_pu(Z)
\end{align}
Then, for any $u$, we have
\begin{equation}\label{e:4.11}
\Big(\lambda-\frac{1}{p}\Delta^{X_0}_p\Big)^{m}
\psi\Big(\lambda-\frac{1}{p}\Delta_p\Big)^{-m}u =
\left[\Big(\lambda-\frac{1}{p}\Delta^{X_0}_p\Big)^{m},
\psi\right]\Big(\lambda-\frac{1}{p}\Delta_p\Big)^{-m}u+\psi u
\end{equation}
and
\begin{multline}\label{e:4.12}
\psi\Big(\lambda-\frac{1}{p}\Delta_p\Big)^{-m}u
- \Big(\lambda-\frac{1}{p}\Delta^{X_0}_p\Big)^{-m}\psi u\\
=\Big(\lambda-\frac{1}{p}\Delta^{X_0}_p\Big)^{-m}
\left[\Big(\lambda-\frac{1}{p}\Delta^{X_0}_p\Big)^{m}, 
\psi\right]\Big(\lambda-\frac{1}{p}\Delta_p\Big)^{-m}u.
\end{multline}

Now, for any $p\in \mathbb N^{*}$, $\lambda\in \delta$ and $W\in X_0$, 
by Theorem~\ref{Thm1.9a} and \eqref{e:4.12}, we have for $m>2n+1$,
$v\in (L_0^p\otimes E_0)_W, |v|=1$,
\begin{multline}\label{e:4.13}
\left\|\phi^{X_0}_{\frac 12c_0\sqrt{p},W} 
\Big(\psi\Big(\lambda-\frac{1}{p}\Delta_p\Big)^{-m}
- \Big(\lambda-\frac{1}{p}\Delta^{X_0}_p\Big)^{-m}\psi\Big) 
\delta_v\right\|_{p,n+1}\\ 
\leq C \left\|\phi^{X_0}_{\frac 12c_0\sqrt{p},W} 
\left[\Big(\lambda-\frac{1}{p}\Delta^{X_0}_p\Big)^{m}, 
\psi\right]\Big(\lambda-\frac{1}{p}\Delta_p\Big)^{-m}
\delta_v\right\|_{p,n+1-m}.
\end{multline}
Since the operator 
$\left[\Big(\lambda-\frac{1}{p}\Delta^{X_0}_p\Big)^{m}, \psi\right]$ 
vanishes on $B^{X_0}(0,3\varepsilon)$, for any 
$W\in B^{X_0}(0,\varepsilon)$ we have
$d^{X_0}(W,Z)>2\varepsilon$ and, therefore, 
$\phi^{X_0}_{\frac 12c_0\sqrt{p},W}(Z)=1$ on the support of 
$\left[\Big(\lambda-\frac{1}{p}\Delta^{X_0}_p\Big)^{m}, \psi\right]$. 
Hence, for $W\in B^{X_0}(0,\varepsilon)$, by \eqref{e:4.13}, we get
\begin{equation}\label{e:4.14}
\begin{split}
&\left\|\phi^{X_0}_{\frac 12c_0\sqrt{p},W} 
\Big(\psi\Big(\lambda-\frac{1}{p}\Delta_p\Big)^{-m} 
- \Big(\lambda-\frac{1}{p}\Delta^{X_0}_p\Big)^{-m}\psi\Big)
\delta_v\right\|_{p,n+1}\\[3pt] 
&\leq C \left\| 
\left[\Big(\lambda-\frac{1}{p}\Delta^{X_0}_p\Big)^{m}, 
\psi\right]\!\Big(\lambda-\frac{1}{p}\Delta_p\Big)^{-m}\delta_v
\right\|_{p,n+1-m}\\[3pt]
&\leq C \left\|\Big(\lambda
-\frac{1}{p}\Delta_p\Big)^{-m}\delta_v\right\|_{p,n+1} 
\leq C \left\|\delta_v\right\|_{p,n+1-m}\leq C p^{n/2}.
\end{split}
\end{equation}
and, finally, by \eqref{e:diff} and \eqref{e:4.14}, we get
\begin{equation}\label{e:4.15}
 \sup_{Z,W\in B^{X_0}(0,\varepsilon/2)}  \big|R^{(m)}_{\lambda,p}(Z,W)
 -R^{X_0, (m)}_{\lambda,p}(Z,W)\big|  \leq C_1p^{n} 
 e^{-\frac 12c_0\sqrt{p}} \leq C_2 e^{-\frac 14c_0\sqrt{p}}.
\end{equation}
Using \eqref{bergman-integral1}, we complete the proof for $k=0$.
The proof of the case of arbitrary $k$ can be given similarly to
the proof of Theorem \ref{p:west}.
\end{proof}

Now we can proceed as in \cite{bergman,ma-ma08}. We only observe 
that all the constants in the estimates in \cite{bergman,ma-ma08} depend 
on finitely many derivatives of $g^{TX}$, $h^L$, $\nabla^L$, $h^E$, 
$\nabla^E$, $J$ and the lower bound of $g^{TX}$. Therefore, by
the bounded geometry assumptions, all the estimates are uniform on
the parameter $x_0\in X$. We will omit the details and give only the
final result, stating the full off-diagonal asymptotic expansion of the 
generalized Bergman kernel $P_{q,p}$ as $p\to \infty$ 
(see Theorem~\ref{t:main} below).

The almost complex structure $J_{x_0}$ induces a decomposition 
$T_{x_0}X\otimes_{\mathbb R}\mathbb
C=T^{(1,0)}_{x_0}X\oplus T^{(0,1)}_{x_0}X$, where 
$T^{(1,0)}_{x_0}X$ and $T^{(0,1)}_{x_0}X$ are the eigenspaces 
of $J_{x_0}$ corresponding to eigenvalues $i$ and $-i$ respectively. 
Denote by $\det_{\mathbb C}$ the determinant function of the complex
space $T^{(1,0)}_{x_0}X$. Put
\begin{align}\label{e:4.17}
\mathcal J_{x_0}=-2\pi i J_0.
\end{align}
Then $\mathcal J_{x_0} : T^{(1,0)}_{x_0}X\to T^{(1,0)}_{x_0}X$
is positive, and $\mathcal J_{x_0} : T_{x_0}X\to T_{x_0}X$ is
skew-adjoint \cite[(1.81)]{ma-ma08} (cf.\  \cite[(4.114)]{DLM04a}).
We define a function $\mathcal P=\mathcal P_{x_0}\in 
\mathscr{C}^\infty(T_{x_0}X\times T_{x_0}X)$ by
\begin{equation}\label{e:Bergman}
\mathcal P(Z, Z^\prime)
=\frac{\det_{\mathbb C}\mathcal J_{x_0}}{(2\pi)^n}
\exp\Big(-\frac 14\langle (\mathcal J^2_{x_0})^{1/2}(Z-Z^\prime), 
(Z-Z^\prime)\rangle +\frac 12 \langle \mathcal J_{x_0} Z, Z^\prime 
\rangle \Big).
\end{equation}
It is the Bergman kernel of the second order differential operator 
$\mathcal L_0$ on $\mathscr{C}^\infty(T_{x_0}X,\mathbb{C})$ given by
\begin{equation}\label{e:L0}
\mathcal L_0=-\sum_{j=1}^{2n} \Big(\nabla_{e_j}
+\frac 12 R^L_{x_0}(Z,e_j)\Big)^2-\tau (x_0),
\end{equation}
where $\{e_j\}_{j=1,\ldots,2n}$ is an orthonormal base in $T_{x_0}X$.
Here, for $U\in T_{x_0}X$, we denote by $\nabla_U$ the ordinary operator 
of differentiation in the direction $U$ on the space 
$\mathscr{C}^\infty(T_{x_0}X, \mathbb{C})$. Thus, $\mathcal P$ 
is the smooth kernel (with respect to $dv_{TX}$) of the orthogonal 
projection in $L^2(T_{x_0}X, \mathbb{C})$ 
to the kernel of $\mathcal L_0$.

% Consider the fiberwise product $TX\times_X TX=\{(Z,Z^\prime)
% \in T_{x_0}X\times T_{x_0}X : x_0\in X\}$. Let 
% $\pi : TX\times_X TX\to X$ be the natural projection given by  
% $\pi(Z,Z^\prime)=x_0$. The kernel $P_{q,p}(x,x^\prime)$ induces 
% a smooth section $P_{q,p,x_0}(Z,Z^\prime)$ of the vector bundle 
% $\pi^*(\operatorname{End}(E))$ on $TX\times_X TX$ defined for all
% $x_0\in X$ and $Z,Z^\prime\in T_{x_0}X$ with $|Z|, |Z^\prime|<a_X$.
Let $\kappa$ be the function defined in \eqref{e:kappa}.

\begin{thm}\label{t:main}
There exists $\varepsilon\in (0,a^X)$ such that, for any
$j,m,m^\prime\in \mathbb N$, $j\geq 2q$, there exist positive 
constants $C$, $c$ and $M$ such that for any $p\geq 1$, $x_0\in X$
and $Z,Z^\prime\in T_{x_0}X$, $|Z|, |Z^\prime|<\varepsilon$, we have
\begin{multline}\label{e:main}
\sup_{|\alpha|+|\alpha^\prime|\leq m}
\Bigg|\frac{\partial^{|\alpha|+|\alpha^\prime|}}
{\partial Z^\alpha\partial Z^{\prime\alpha^\prime}}
\Bigg(\frac{1}{p^n}P_{q,p,x_0}(Z,Z^\prime)
-\sum_{r=2q}^jF_{q,r,x_0}(\sqrt{p} Z, \sqrt{p}Z^\prime)
\kappa^{-\frac 12}(Z)\kappa^{-\frac 12}(Z^\prime)p^{-\frac{r}{2}+q}
\Bigg)\Bigg|_{\mathscr{C}^{m^\prime}(X)}\\
\leq Cp^{-\frac{j-m+1}{2}+q}(1+\sqrt{p}|Z|+\sqrt{p}|Z^\prime|)^M
\exp(-c\sqrt{\mu_0p}|Z-Z^\prime|)+\mathcal O(e^{-c_0\sqrt{p}}),
\end{multline}
where
\begin{align}\label{e:4.19}
F_{q,r,x_0}(Z,Z^\prime)=J_{q,r,x_0}(Z,Z^\prime)\mathcal 
P_{x_0}(Z,Z^\prime),
\end{align}
$J_{q,r,x_0}(Z,Z^\prime)$ are polynomials in $Z, Z^\prime$, 
depending smoothly on $x_0$, with the same parity as $r$ and 
$\operatorname{deg} J_{q,r,x_0}\leq 3r$.
\end{thm}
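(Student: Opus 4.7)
The plan is to reduce the problem on $X$ to a problem on the model space $X_0=\mathbb R^{2n}$ equipped with the extended geometric data $(g_0, \nabla^{L_0}, \nabla^{E_0})$, for which one already has a full off-diagonal asymptotic expansion, and then to exploit Theorem~\ref{prop13} to transfer the expansion back to $X$ with an exponentially small error coming from the localization. Concretely, by Theorem~\ref{prop13}, for $Z, Z'\in B^{X_0}(0,\varepsilon/2)$ we have
\begin{equation*}
P_{q,p,x_0}(Z,Z')=P^{0}_{q,p}(Z,Z')+\mathcal O(e^{-c_0\sqrt p})
\end{equation*}
in every $\mathscr C^k$-norm, so it is enough to establish \eqref{e:main} with $P_{q,p,x_0}$ replaced by $P^{0}_{q,p}$. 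Since the model data on $X_0$ satisfy, by \eqref{e:4.2}, a uniform positivity condition with constant $\tfrac45\mu_0$ and belong to a family that is bounded in the $\mathscr C^\infty_b$-sense uniformly in $x_0\in X$, the results of \cite[Section~1]{ma-ma08} and \cite{bergman} apply with all constants independent of $x_0$.

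The next step is to carry out the standard rescaling on $X_0=T_{x_0}X$. Introduce the rescaled coordinates $Z=u/\sqrt p$ and the rescaled operator
\begin{equation*}
\mathcal L^t_p=t^2\,\kappa^{1/2}(tZ)\,\bigl(\tfrac{1}{p}\Delta^{X_0}_p\bigr)\,\kappa^{-1/2}(tZ)\big|_{Z\mapsto Z/t},\qquad t=\tfrac{1}{\sqrt p},
\end{equation*}
acting on $\mathscr C^\infty(T_{x_0}X,E_{x_0})$. By the same computation as in \cite[Theorem~1.4]{ma-ma08}, $\mathcal L^t_p$ is a smooth family of second order elliptic operators in $t$ whose limit at $t=0$ is the model operator $\mathcal L_0$ of \eqref{e:L0} (transplanted to an $E_{x_0}$-valued setting), and whose coefficients admit a Taylor expansion $\mathcal L^t_p=\mathcal L_0+\sum_{r\ge 1}t^r\mathcal O_r$ with $\mathcal O_r$ a differential operator of order $\le 2$ with polynomial coefficients of controlled degree. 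The spectral gap \eqref{gap0} for $\Delta^{X_0}_p$ becomes a uniform spectral gap for the rescaled family, which allows one to write the rescaled projection $(\Delta^{X_0}_p)^q P_{\mathcal H^0_p}$ as a contour integral $\frac{1}{2\pi i}\int_\delta \lambda^q(\lambda-\mathcal L^t_p)^{-1}\,d\lambda$ (modulo the rescaling).

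Expanding $(\lambda-\mathcal L^t_p)^{-1}$ as a Neumann series in $t$ around $(\lambda-\mathcal L_0)^{-1}$ and integrating over $\delta$, one obtains polynomial-by-polynomial the coefficients $F_{q,r,x_0}(u,u')=J_{q,r,x_0}(u,u')\mathcal P_{x_0}(u,u')$ of the expansion, with parity and degree controlled by the structure of the $\mathcal O_r$'s exactly as in \cite[Theorem~1.18]{ma-ma08} and \cite{bergman}. The polynomial-times-Gaussian pointwise remainder comes from the fact that each $(\lambda-\mathcal L_0)^{-1}$ in the series has a Schwartz kernel of type \eqref{e:Bergman} (multiplied by a polynomial), so the term-by-term pointwise estimates \cite[Theorem~1.19]{ma-ma08} give the main remainder in \eqref{e:main}; adding back the $\mathcal O(e^{-c_0\sqrt p})$ from Theorem~\ref{prop13} produces the second part of the remainder.

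The main obstacle, and the reason the paper does not reproduce the calculation, is the purely technical but lengthy task of checking that every constant appearing in the Neumann series argument of \cite{ma-ma08,bergman}, including the bounds on the $\mathcal O_r$ coefficients, the Sobolev-type estimates on $(\lambda-\mathcal L^t_p)^{-1}$ and the pointwise Schwartz-kernel bounds, depends only on finitely many $\mathscr C^k_b$-norms of $g^{TX}$, $h^L$, $\nabla^L$, $h^E$, $\nabla^E$, $J$ together with the lower bound $\mu_0$; the bounded geometry assumption is precisely what guarantees uniformity in $x_0\in X$ of all these ingredients, so that the same argument that works in the compact setting goes through verbatim with $x_0$-independent constants. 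Once this uniformity is verified, the estimate \eqref{e:main} follows by combining Theorem~\ref{prop13} with the $x_0$-uniform version of \cite[Theorem~4.18]{ma-ma08} / the main theorem of \cite{bergman}.
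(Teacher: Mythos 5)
Your proposal follows the same route as the paper: reduce to the model space $X_0$ via Theorem~\ref{prop13}, then invoke the rescaling and Neumann-series machinery of \cite{ma-ma08,bergman}, with bounded geometry supplying uniformity in $x_0\in X$. The paper itself records only the last observation about uniformity of constants and refers to \cite{bergman,ma-ma08} for the rest; you have simply sketched a bit more of that machinery, so the two arguments coincide.
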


Here $\mathscr{C}^{m^\prime}(X)$ is the 
$\mathscr{C}^{m^\prime}$-norm for the parameter $x_0\in X$. 
Note that the summation in \eqref{e:main} starts from $r=2q$
and \eqref{e:4.19} due
to \cite[Th 1.18]{ma-ma08}.

 %%%%%%%%%%%%%%%%%%%%%%%%%%%%%%%%%%%%
%%%%%%%%%%%%%%%%%%%%%%%%%%%%%%%%%%
\section{Berezin-Toeplitz quantization on orbifolds} \label{pbs4}
%%%%%%%%%%%%%%%%%%%%%%%%%%%%%%%%%%%%%%
%%%%%%%%%%%%%%%%%%%%%%%%%%%%%%%%%

After the pioneering work of Berezin, the Berezin-Toeplitz 
quantization achieved generality for compact 
K\"ahler manifolds and trivial bundle $E$
through the works \cite{BouGou81,BouSt76, BMS94}. 
We refer to \cite{ma:ICMtalk,MM07,ma-ma08,MM11} 
for more references and background.
The theory of Berezin-Toeplitz quantization on K\"ahler
and symplectic orbifolds was first  established by
Ma and Marinescu \cite[Theorems 6.14, 6.16]{ma-ma08a}
by using as quantum spaces the kernel of the spin$^c$ Dirac operator.
Especially,
they showed that the set of Toeplitz operators forms an algebra.
The main tool was the asymptotic expansion of the Bergman kernel 
associated with the spin$^c$ Dirac operator of Dai-Liu-Ma \cite{DLM04a}.

In this Section, we establish the corresponding theory
for the renormalized Bochner-Laplacian on symplectic orbifolds.
In \cite[\S 5.4]{MM07} one can find more explanations and references for 
Sections \ref{pbs4.1} and \ref{pbs4.2}.
For related topics about orbifolds we refer to \cite{ALR}.

This Section is organized as follows.
In Section \ref{pbs4.1} we recall the basic definitions about orbifolds.
In Section \ref{pbs4.2} we explain the asymptotic expansion
of Bergman kernel on symplectic orbifolds, which we apply in
Section \ref{pbs4.3} to derive the Berezin-Toeplitz quantization on 
symplectic orbifolds.

\subsection{Basic definitions on orbifolds} \label{pbs4.1}

We define at first a  category $\mathcal{M}_s$ as follows~:
The objects of $\mathcal{M}_s$
are the class of pairs $(G,M)$ where $M$ is a connected smooth manifold
and $G$ is a finite group acting effectively on $M$
(i.e., if $g\in G$ such that $gx=x$ for any $x\in M$,
then $g$ is the unit element of $G$). If $(G,M)$ and
 $(G',M')$ are two objects, then a morphism $\Phi: (G,M)\rightarrow
(G',M')$ is a family of open embeddings $\varphi: M\rightarrow M'$
satisfying~:

{i)} For  each $\varphi \in \Phi $, there is an injective group
 homomorphism
$\lambda_{\varphi}~: G\rightarrow G' $ that makes $\varphi$ be
$\lambda_{\varphi}$-equivariant.

{ii)} For $g\in G', \varphi \in \Phi $, we define
$g\varphi : M \rightarrow M'$
by $(g\varphi)(x) = g\varphi(x)$ for $x\in M$.
If $(g\varphi)(M) \cap \varphi(M) \neq \emptyset$,
 then $g\in  \lambda_{\varphi}(G)$.

{ iii)} For $\varphi \in \Phi $, we have $\Phi = \{g\varphi : g\in G'\}$.

\begin{defn}\label{pbt4.1} Let $X$ be a paracompact Hausdorff space.
A $m$-dimensional \emph{orbifold chart} on $X$ consists of a
connected open set $U$ of $X$,
an object $(G_U,\widetilde{U})$ of $\mathcal{M}_s$ with 
$\dim\widetilde{U}=m$,
and a ramified covering $\tau_U:\widetilde{U}\to U$ which is
$G_U$-invariant and induces a homeomorphism
$U \simeq \widetilde{U}/G_U$. We denote the chart by
$(G_U,\widetilde{U})\stackrel{\tau_U}{\longrightarrow}U$.

A $m$-dimensional \emph{orbifold atlas} $\mathcal{V}$ on $X$
consists of a family of $m$-dimensional orbifold charts
$\mathcal{V} (U)
=((G_U,\widetilde{U})\stackrel{\tau_U}{\longrightarrow} U)$
satisfying the following conditions\,:

{i)} The open sets $U\subset X$ form a covering $\mathcal{U}$ with
 the property:
\begin{equation}\label{pb4.1}
 \text{For any $U, U'\in \mathcal{U}$ and $x\in U\cap U'$,
there is
$U''\in \mathcal{U}$ such that $x\in U''\subset U\cap U'$}.
\end{equation}

{ii)} for any $U, V\in \mathcal{U}, U\subset V$ there exists a morphism
$\varphi_{VU}:(G_U,\widetilde{U})\rightarrow (G_V,\widetilde{V})$,
which covers the inclusion $U\subset V$ and satisfies
$\varphi_{WU}=\varphi_{WV} \circ \varphi_{VU}$
for any $U,V,W\in \mathcal{U}$, with $U\subset V \subset W$.
\end{defn}

It is easy to see that there exists a unique maximal orbifold atlas
$\mathcal{V}_{max}$ containing $\mathcal{V}$;
$\mathcal{V}_{max}$ consists of all orbifold charts
$(G_U,\widetilde{U})\stackrel{\tau_U}{\longrightarrow} U$,
which are locally isomorphic to charts from $\mathcal{V}$ in
the neighborhood
of each point of $U$. A maximal orbifold atlas $\mathcal{V}_{max}$
is called an \emph{orbifold structure} and the pair 
$(X,\mathcal{V}_{max})$
is called an orbifold. As usual, once we have an orbifold atlas
$\mathcal{V}$ on $X$ we denote the orbifold by $(X,\mathcal{V})$,
since $\mathcal{V}$ determines uniquely $\mathcal{V}_{max}$\,.

Note that if $\mathcal{U}^\prime$ is a refinement of $\mathcal{U}$
satisfying \eqref{pb4.1}, then there is an orbifold
atlas $\mathcal{V}^\prime$ such that
$\mathcal{V} \cup \mathcal{V}^\prime $
is an orbifold atlas, hence
$\mathcal{V} \cup \mathcal{V}^\prime \subset \mathcal{V}_{max}$.
This shows that we may choose $\mathcal{U}$ arbitrarily fine.

Let $(X,\mathcal{V})$ be an orbifold. For each $x\in X$, we can choose a
small neighborhood $(G_x, \widetilde{U}_x)\to U_x$ such
that $x\in \widetilde{U}_x$ is a fixed point of $G_x$
(it follows from the definition that such a $G_x$ is unique up to
isomorphisms for each $x\in X$).
We denote by  $|G_x|$ the cardinal of $G_x$.
If $|G_x|=1$, then $X$ has a smooth manifold structure 
in the neighborhood of $x$, which is called a smooth point of $X$.
If  $|G_x|>1$, then $X$ is not a smooth manifold in 
the neighborhood of $x$,
which is called a singular point of $X$. We denote by
$X_{sing}= \{x\in X; |G_x|>1\}$ the singular set of $X$,
and $X_{reg}= \{x\in X; |G_x|=1\}$ the regular set of $X$.

It is useful to note that on an orbifold $(X,\mathcal{V})$ we can
construct partitions of unity. First, let us call a function on $X$ smooth,
if its lift to any chart of the orbifold atlas $\mathcal{V}$ is smooth
in the usual sense. Then the definition and construction of a smooth
partition of unity associated to a locally finite covering carries over
easily from the manifold case. The point is to construct
smooth $G_U$-invariant functions with compact support on
$(G_U,\widetilde{U})$.

In Definition \ref{pbt4.1} we can replace $\mathcal{M}_s$ by a category
of manifolds with an additional structure such as orientation,
Riemannian metric, almost-complex structure or complex structure.
We impose that the morphisms (and the groups) preserve the specified
structure. So we can define oriented, Riemannian,
almost-complex or complex orbifolds.

Let $(X,\mathcal{V})$ be an arbitrary orbifold. By the above definition,
a \emph{Riemannian metric} on $X$ is a Riemannian metric $g^{TX}$
on $X_{reg}$ such that the lift of $g^{TX}$ to any chart of the orbifold
atlas $\mathcal{V}$ can be extended to a smooth Riemannian metric.
Certainly, for any $(G_U, \wi{U})\in \mathcal{V}$, we can always
construct a $G_U$-invariant Riemannian metric on $\wi{U}$.
By a partition of unity argument, we see that there exist 
Riemannian metrics on the orbifold $(X,\mathcal{V})$.

\begin{defn}\label{pbt4.4}
 An orbifold vector bundle $E$ over an
orbifold $(X,\mathcal{V})$ is defined as follows\,: $E$ is an orbifold
and for $U\in \mathcal{U}$, $(G_U^{E}, \widetilde{p}_U:
\widetilde{E}_U \rightarrow \widetilde{U})$ is
 a $G_U^{E}$-equivariant vector bundle, $(G_U^{E}, \widetilde{E}_U)$
(resp. $(G_U=G_U^{E}/K_U^{E}, \widetilde{U})$,
$K_U^{E}= \ke (G_U^{E}\rightarrow\mbox{{\rm Diffeo}} (\widetilde{U})))$
is the orbifold structure of $E$ (resp. $X$) and morphisms for 
$(G_U^{E}, \widetilde{E}_U)$ are morphisms of equivariant vector bundles.
If $G_U^E$ acts effectively on $\widetilde{U}$ for $U\in \mathcal{U}$,
i.e., $K_U^{E} = \{ 1\}$, we call $E$ a proper orbifold vector bundle.
\end{defn}

Note that any structure on $X$ or $E$ is locally
$G_x$ or $G_{U_x}^{E}$-equivariant.

\begin{rem}\label{pbt4.5}
Let $E$ be an orbifold vector bundle on $(X,\mathcal{V})$.
For $U\in \mathcal{U}$,
let $\wi{E ^{\pr}_U}$ be the maximal $K_U^{E}$-invariant sub-bundle of
 $\wi{E}_U$ on $\wi{U}$. Then $(G_U, \wi{E ^{\pr}_U})$  defines a
proper orbifold vector bundle on $(X, \mathcal{V})$, denoted by $E ^{\pr}$.

The  (proper)  orbifold tangent bundle $TX$ on an orbifold $X$ is defined by
$(G_U, T\widetilde{U} \rightarrow \widetilde{U})$, for $U\in \mathcal{U}$.
In the same vein we introduce the cotangent bundle $T^*X$.
We can form tensor products of bundles by taking the tensor
products of their local expressions in the charts of an orbifold atlas.
Note that a Riemannian metric on $X$ induces a section of 
$T^*X\otimes T^*X$ over $X$
which is a positive definite bilinear form on $T_xX$ at each point $x\in X$.
\end{rem}

Let $E \rightarrow X$ be an orbifold vector bundle and
$k\in \NN\cup\{\infty\}$. A section $s: X\rightarrow E$ is called 
$\mathscr{C}^k$
 if for each $U\in \mathcal{U}$, $s|_{U}$ is covered by
a $G_U^{E}$-invariant $\mathscr{C}^k$
section $\widetilde{s}_U : \widetilde{U} \rightarrow \widetilde{E}_U$.
 We denote by $\mathscr{C}^k(X,E)$
the space of $\mathscr{C}^k$ sections of $E$ on $X$.

If $X$ is oriented, we define the integral
 $\int_{X} \alpha$ for a form $\alpha$
 over $X$ (i.e., a section of $ \Lambda  (T^*X)$ over $X$) as follows.
If $\supp(\alpha) \subset U\in \mathcal U$, set
\begin{equation}\label{pb4.5}
\int_{X} \alpha: = \frac{1}{|G_U|} \int_{\widetilde{U}}
\widetilde{\alpha}_U.
\end{equation}
It is easy to see that the definition is independent of the chart.
For general $\alpha$ we extend the definition by using a partition of unity.

If $X$ is an oriented Riemannian orbifold,
there exists a canonical volume element $dv_X$ on $X$, which
is a section of
$\Lambda^m(T^*X)$, $m=\dim X$. Hence, we can also integrate
functions on $X$.

Assume now that the Riemannian orbifold $(X,\mathcal V)$ is compact.
For $x,y\in X$, put
\begin{eqnarray}\begin{array}{l}
d(x,y) = \mbox{Inf}_\gamma  \Big \{ \sum_i \int_{t_{i-1}}^{t_i}
|\frac{\partial }{\partial t}\widetilde{\gamma}_i(t)| dt \Big |
 \gamma: [0,1] \to X, \gamma(0) =x, \gamma(1) = y,\\
 \hspace*{15mm}  \mbox{such that there exist }   
 t_0=0< t_1 < \cdots < t_k=1,
\gamma([t_{i-1}, t_i])\subset U_i,  \\
\hspace*{15mm}U_i \in \mathcal{U}, \mbox{ and a } \mathscr{C}^{\infty}
\mbox{ map  } \widetilde{\gamma}_i: [t_{i-1}, t_i] \to \widetilde{U}_i
 \mbox{ that covers } \gamma|_{[t_{i-1}, t_i]}   \Big \}.
\end{array}\nonumber
\end{eqnarray}
Then $(X, d)$ is a metric space.

Let us discuss briefly kernels and operators on orbifolds.
For any open set $U\subset X$ and orbifold chart
$(G_U,\widetilde{U})\stackrel{\tau_U}{\longrightarrow} U$,
we will add a superscript $\, \wi{}\,$ to indicate the corresponding
objects on $\widetilde{U}$.
Assume that
$\wi{\mK}(\wi{x},\wi{x}^{\,\prime})\in \mathscr{C}^\infty(\wi{U}
\times \wi{U},
\pi_1^* \wi{E}\otimes \pi_2^* \wi{E}^*)$
verifies
\begin{align}\label{pb4.3}
(g,1)\wi{\mK}(g^{-1}\wi{x},\wi{x}^{\,\prime})
=(1,g^{-1})\wi{\mK}(\wi{x},g\wi{x}^{\,\prime})
\quad \text{ for any $g\in G_{U}$,}
\end{align}
where $(g_1,g_2)$ acts on
$\wi{E}_{\wi{x}}\times \wi{E}_{\wi{x}^{\,\prime}}^*$
by $(g_1,g_2)(\xi_1,\xi_2)= (g_1\xi_1,g_2 \xi_2)$.

We define the operator $\wi{\mK}: \mathscr{C}^\infty_0(\wi{U}, \wi{E})
\to \mathscr{C}^\infty(\wi{U}, \wi{E})$ by
\begin{equation}\label{pb4.4}
(\wi{\mK}\, \wi{s}) (\wi{x})= \int_{\wi{U}} \wi{\mK}(\wi{x},
\wi{x}^{\,\prime})
 \wi{s}(\wi{x}^{\,\prime}) dv_{\wi{U}} (\wi{x}^{\,\prime})
\quad\text{for $\wi{s} \in \mathscr{C}^\infty_0(\wi{U}, \wi{E})$\,.}
\end{equation}
Any element $g\in G_U$ acts on $\mathscr{C}^\infty(\wi{U}, \wi{E})$ by:
$(g \cdot \wi{s})(\wi{x}):=g \cdot \wi{s}(g^{-1}\wi{x})$, 
where $\wi{s} \in \mathscr{C}^\infty(\wi{U}, \wi{E})$. 
We can then identify
an element $s \in \mathscr{C}^\infty({U}, {E})$
with an element $\wi{s} \in \mathscr{C}^\infty(\wi{U}, \wi{E})$
verifying $g\cdot \wi{s}=\wi{s}$ for any $g\in G_U$.

With this identification, we define the operator
$\mK: \mathscr{C}^\infty_0(U,{E})\to \mathscr{C}^\infty({U},{E})$ by
\begin{equation}\label{pb4.6}
({\mK} s)(x)=\frac{1}{|G_U|}\int_{\wi{U}} \wi{\mK}(\wi{x},
\wi{x}^{\,\prime})
 \wi{s}(\wi{x}^{\,\prime}) dv_{\wi{U}} (\wi{x}^{\,\prime})
\quad\text{for $s \in \mathscr{C}^\infty_0({U},{E})$\,,}
\end{equation}
where $\wi{x}\in\tau^{-1}_U(x)$.
Then the smooth kernel $\mK(x,x^\prime)$ of the operator $\mK$
with respect to $dv_X$ is (cf.\  \cite[(5.18)]{DLM04a})
\begin{equation}\label{pb4.7}
\mK(x,x^\prime)= \sum_{g\in G_U} (g,1)\wi{\mK}(g^{-1}\wi{x},
\wi{x}^{\,\prime}).
\end{equation}

Let $\mK_1,\mK_2$ be two operators as above and assume that the 
kernel of one of $\wi{\mK}_1, \wi{\mK}_2$ has compact support.
By \eqref{pb4.5}, \eqref{pb4.3}, and \eqref{pb4.6},
the kernel of $\mK_1\circ \mK_2$ is given by
\begin{equation}\label{pb4.9}
(\mK_1\circ \mK_2)(x,x^\prime)= \sum_{g\in G_U}
(g,1)(\wi{\mK}_1\circ \wi{\mK}_2)(g^{-1}\wi{x},\wi{x}^{\,\prime}).
\end{equation}

\subsection{Bergman kernel on symplectic orbifolds} \label{pbs4.2}
%%%%%%%%%%%%%%%%%%%%%%%%%%%%%%
%%%%%%%%%%%%%%%%%%%%%%%%%%%%%%%%%%%%%
Let $(X,\omega)$ be a compact symplectic orbifold of dimension $2n$. 
Assume that there exists a proper orbifold Hermitian line bundle $(L,h^L)$ 
on $X$ with a Hermitian connection 
$\nabla^L : \mathscr{C}^\infty(X,L)\to 
\mathscr{C}^\infty(X,T^*X\otimes L)$ 
satisfying the prequantization condition:
\begin{equation}\label{0.-1}
\frac{i}{2\pi}R^L=\omega.
\end{equation}
where $R^L=(\nabla^L)^2$ is the curvature of $\nabla^L$.
Let $(E, h^E)$ be a proper orbifold Hermitian vector bundle on $X$ 
equipped with a Hermitian connection $\nabla^E$ and $R^E$ be
the curvature of $\nabla^E$.

Let $g^{TX}$ be a Riemannian metric on $X$. Let $\Delta_p$ be 
the renormalized Bochner-Laplacian acting on 
$\mathscr{C}^\infty(X,L^p\otimes E)$ by \eqref{e:Delta_p}.  
With the same proof as in \cite[Corollary 1.2]{ma-ma02}, 
we can establish the spectral gap property.

\begin{thm}\label{t:gap}
Let $(X,\om)$ be a compact symplectic orbifold, $(L,\nabla^{L}, h^L)$ 
be a prequantum Hermitian proper orbifold line bundle
on $(X,\om)$ and $(E,\nabla^{E},h^E)$ be an arbitrary Hermitian
proper orbifold vector bundle on $X$. There exists $C_L>0$ 
such that for any $p$
 \begin{equation}\label{pb4.17}
 \sigma(\Delta_p)\subset [-C_L,C_L]\cup [2p\mu_0-C_L,+\infty),
 \end{equation}
where $\mu_0>0$ is given by \eqref{mu}.
\end{thm}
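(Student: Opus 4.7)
My plan is to adapt the proof of \cite[Corollary 1.2]{ma-ma02} from the compact manifold case to the orbifold case. The key observation is that the Ma-Marinescu argument rests on a pointwise Lichnerowicz-type identity for a spin$^c$ Dirac operator $D_p$ acting on $\mathscr{C}^\infty(X,\Lambda^{0,\bullet}T^*X\otimes L^p\otimes E)$ together with the pointwise curvature positivity \eqref{mu}. Both ingredients are compatible with the orbifold structure since $(L,\nabla^L,h^L)$ and $(E,\nabla^E,h^E)$ are proper orbifold bundles and $(g^{TX},\omega,J)$ descend from $G_U$-equivariant data in each chart $(G_U,\wi U)\to U$; hence all operators lift to $G_U$-equivariant operators on $\wi U$ and descend to the orbifold $X$.

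Concretely, I would proceed in two steps. Following \cite{ma-ma02}, the Lichnerowicz formula gives
\begin{equation*}
D_p^2 = \Delta^{L^p\otimes E}-2p\tau + \Phi_p
\end{equation*}
as operators on $\mathscr{C}^\infty(X,\Lambda^{0,\bullet}T^*X\otimes L^p\otimes E)$, where $\Phi_p$ is a zeroth order endomorphism which is $O(1)$ on the $(0,0)$-component and bounded below by $2p\mu_0-C$ on $\bigoplus_{q\geq 1}\Lambda^{0,q}T^*X\otimes L^p\otimes E$. This identity is pointwise and hence holds verbatim in each orbifold chart; combined with the $L^2$-product on $X$ defined via \eqref{pb4.5} and with the non-negativity $D_p^2\geq 0$, it yields, for sections in the $(0,0)$-component orthogonal to $\ker \bar\partial^{L^p\otimes E}$, the bound $\langle\Delta_p s,s\rangle\geq (2p\mu_0-C_L)\|s\|^2$. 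Compactness of $X$ makes $\sigma(\Delta_p)$ discrete, so this estimate combined with the trivial lower bound $\Delta_p\geq -C_L$ forces the two-band structure $\sigma(\Delta_p)\subset[-C_L,C_L]\cup[2p\mu_0-C_L,\infty)$.

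The only nontrivial point in transferring the proof is verifying that the construction of $D_p$ and the Lichnerowicz identity are $G_U$-equivariant in each chart; this is automatic because the spin$^c$ Dirac bundle $\Lambda^{0,\bullet}T^*X\otimes L^p\otimes E$ is a proper orbifold construction associated with the almost complex structure $J$, and the Lichnerowicz identity is an algebraic relation between curvatures and Clifford multiplications that makes sense pointwise on $X_{reg}$ and extends by continuity to all of $X$. The hardest part of the original proof in \cite{ma-ma02} is establishing the said Lichnerowicz identity together with the zeroth order bound on $\Phi_p$; once this is in hand, the orbifold version requires no new analytic input beyond the orbifold integration and $G_U$-equivariant bookkeeping.
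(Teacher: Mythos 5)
Your proposal follows the paper's own route: the paper justifies Theorem~\ref{t:gap} only by citing that the proof of \cite[Corollary 1.2]{ma-ma02} carries over, and your key observation---the Lichnerowicz identity is pointwise, hence lifts to every $G_U$-equivariant chart, with orbifold integration \eqref{pb4.5} and $G_U$-equivariance supplying the remaining bookkeeping---is exactly the intended mechanism. As a small correction, the coefficient in your displayed Lichnerowicz formula should be $-p\tau$ rather than $-2p\tau$, so that the restriction of $D_p^2$ to the $(0,0)$-component differs from $\Delta_p=\Delta^{L^p\otimes E}-p\tau$ by a \emph{bounded} zeroth-order term; with $-2p\tau$ the difference would be $O(p)$ and the spectral-gap transfer would fail.
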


From now on, we assume  $p>C_L(2\mu_0)^{-1}$. We consider
the subspace $\mathcal H_p\subset L^2(X,L^p\otimes E)$ 
spanned by the eigensections of $\Delta_p$ corresponding to eigenvalues
in $[-C_L,C_L]$. We define the generalized Bergman kernel
\begin{align}\label{e:5.10}
	P_p(\cdot,\cdot)\in \mathscr{C}^\infty(X\times X,
\pi_1^*(L^p\otimes E)\otimes \pi_2^*((L^p\otimes E)^*))
\end{align}
as  the smooth kernel with respect to the 
Riemannian volume form $dv_X(x')$ of the orthogonal projection
(Bergman projection) $P_{\mathcal H_p}$ from $L^2(X,L^p\otimes E)$ 
onto $\mathcal H_p$.

Consider an open set $U\subset X$ and orbifold chart
$(G_U,\widetilde{U})\stackrel{\tau_U}{\longrightarrow} U$. Recall that
we add a superscript $\, \wi{}\,$ to indicate the corresponding
objects on $\widetilde{U}$. 
The Riemannian metric $g^{TX}$ can be lifted to 
a $G_U$-invariant Riemannian metric $g^{\wi{U}}$ on $\wi{U}$. 
We denote by $B^{\wi{U}}(\wi{x},\varepsilon)$ and 
$B^{T_{\wi{x}}\wi{U}}(0,\varepsilon)$ the open balls in 
$\wi{U}$ and $T_{\wi{x}}\wi{U}$ with center $\wi x$ and $0$ 
and radius $\varepsilon$, respectively. We will always assume that
$\tau_U$ extends to 
$(G_U,\widetilde{V})\stackrel{\tau_V}{\longrightarrow} V$ 
with $U\subset\subset V$ and $\wi{U}\subset\subset \wi{V}$. 
Let $\partial U=\ov{U}\setminus U$ and $\partial \wi{U}
=\ov{\wi{U}}\setminus \wi{U}$. Fix $a^X>0$ such that 
for every open set $U\subset X$ and orbifold chart 
$(G_U,\widetilde{U})\stackrel{\tau_U}{\longrightarrow} U$,
for every $\varepsilon\leq a^X$ and for every $\wi{x}\in \wi{U}$
such that $d(\wi{x},\partial \wi{U})\leq \varepsilon$, the exponential
map $T_{\wi{x}}\wi{U}\ni Z\mapsto \exp^X_{\wi{x}}(Z)\in \wi{U}$ 
is a diffeomorphism from $B^{T_{\wi{x}}\wi{U}}(0,\varepsilon)$ onto 
$B^{\wi{U}}(\wi{x},\varepsilon)$. Throughout in what follows,
$\varepsilon$ runs in the fixed interval $(0,a^X/4)$.

Let $f : \mathbb  R \to [0,1]$ be a smooth even function such that
$f(v)=1$ for $|v| \leqslant  \varepsilon/2$,
and $f(v) = 0$ for $|v| \geqslant \varepsilon$. Set
\begin{equation} \label{0c3}
F(a)= \Big(\int_{-\infty}^{+\infty}f(v) dv\Big)^{-1}
\int_{-\infty}^{+\infty} e ^{i v a}\, f(v) dv.
\end{equation}
Then $F(a)$ is an even function and lies in the Schwartz space
$\mathcal{S} (\mathbb R)$ and $F(0)=1$.
Let $\wi{F}$ be the holomorphic function on
 $\mathbb C$ such that $\wi{F}(a ^2) =F(a)$.
The restriction of $\wi{F}$
 to $\mathbb R$ lies in the Schwartz space $\mS (\mathbb R)$. 
 Then there exists
 $\{c_j\}_{j=1}^{\infty}$ such that for any $k\in \mathbb N$, the function
\begin{equation} \label{0c5}
\wi{F}_k(a)= \wi{F}(a) - \sum_{j=1}^k c_j a ^j \wi{F}(a), \quad c_{k+1}
=\frac{1}{(k+1)!}\wi{F}_k^{(k+1)}(0),
\end{equation}
verifies
\[
\wi{F}_k^{(i)}(0)= 0 \quad \mbox{\rm for any} \  0< i\leqslant k.
 \]

Using the same arguments as in \cite{ma-ma08}, one can show the 
analog of \cite[Proposition 1.2]{ma-ma08}:

\begin{prop}\label{0t3.0}
 For any $k,m\in \mathbb N$, there exists $C_{k,m}>0$ such that for
 $p\geqslant1$
\begin{align}\label{pb4.21}
\left|\wi{F}_k\big(\tfrac{1}{\sqrt{p}}\Delta_{p}\big)(x,x')
- P_{p}(x,x')\right|_{\mathscr{C}^m(X\times X)}
\leqslant C_{k,m} p^{-\frac{k}{2} +2(2m+2n+1)}.
\end{align}
Here the $\mathscr{C}^m$ norm is induced by $\nabla^L,\nabla^E$ and 
$h^L,h^E,g^{TX}$.
\end{prop}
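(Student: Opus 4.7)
The plan is to deduce \eqref{pb4.21} from a spectral bound on $\wi F_k(\tfrac{1}{\sqrt p}\Delta_p) - P_p$, upgraded to pointwise control on its kernel via elliptic regularity and the orbifold Sobolev embedding. The operator $\wi F_k(\tfrac{1}{\sqrt p}\Delta_p) - P_p$ is a Borel function of $\Delta_p$, so one can analyze it by the spectral theorem combined with the spectral gap \eqref{pb4.17}.

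First I would bound its $L^2$ operator norm. On the spectral subspace $\mH_p$ for eigenvalues $\lambda \in [-C_L, C_L]$, it acts by $\wi F_k(\lambda/\sqrt p) - 1$; the defining condition \eqref{0c5} together with $\wi F_k(0) = F(0) = 1$ gives $\wi F_k^{(i)}(0) = 0$ for $0 < i \leq k$, so Taylor's formula yields $|\wi F_k(\lambda/\sqrt p) - 1| \leq C|\lambda|^{k+1} p^{-(k+1)/2} \leq C' p^{-(k+1)/2}$ uniformly for $|\lambda| \leq C_L$. On the complement, corresponding to $\lambda \geq 2p\mu_0 - C_L$, the operator acts by $\wi F_k(\lambda/\sqrt p)$, and since $\wi F_k$ is Schwartz while $\lambda/\sqrt p \geq c\sqrt p$, this contribution is of order $\mathcal O(p^{-N})$ for every $N$. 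Because $\wi F_k(\tfrac{1}{\sqrt p}\Delta_p) - P_p$ commutes with $\Delta_p$, the same spectral decomposition gives, for every $q, q' \in \NN$,
\begin{equation*}
\bigl\|\Delta_p^q \bigl(\wi F_k(\tfrac{1}{\sqrt p}\Delta_p) - P_p\bigr)\Delta_p^{q'}\bigr\|_{L^2 \to L^2} \leq C_{q, q'}\, p^{-(k+1)/2},
\end{equation*}
with the low-energy part controlled by $|\lambda|^{q+q'} \leq C_L^{q+q'}$ and the high-energy part again absorbed by the Schwartz decay of $\wi F_k$.

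Next I would convert these into bounds in the $p$-scaled Sobolev norms \eqref{e2.6}. Since $\tfrac{1}{p}\Delta_p$ is uniformly elliptic of second order in the sense of \eqref{e2.3}, the standard elliptic estimate $\|u\|_{p, 2q} \leq C(\|(\tfrac{1}{p}\Delta_p)^q u\|_{p,0} + \|u\|_{p,0})$ holds on $G_U$-invariant lifts in the orbifold charts with $p$-independent constant. Writing $(\tfrac{1}{p}\Delta_p)^q = p^{-q}\Delta_p^q$ and using the self-adjointness of $\wi F_k(\tfrac{1}{\sqrt p}\Delta_p) - P_p$ to dualize, one obtains for every $M \in \NN$
\begin{equation*}
\bigl\|\wi F_k(\tfrac{1}{\sqrt p}\Delta_p) - P_p\bigr\|_{p, -M \to p, M} \leq C_M\, p^{-(k+1)/2}.
\end{equation*}

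Finally I would upgrade this to the pointwise $\mathscr{C}^m$ estimate on the Schwartz kernel in the manner of Theorem \ref{p:west}. Writing the kernel value at $(x, x')$ as the pairing $\langle K\delta_{x'}, \delta_x\rangle$ and applying covariant derivatives of total order at most $m$ in each argument, each delta section contributes a factor $p^{n/2}$ by Proposition \ref{p:delta}, each ordinary covariant derivative costs $\sqrt p$ relative to the normalized derivative in \eqref{e2.6}, and the Sobolev embedding of Proposition \ref{p:Sobolev} contributes further factors $p^{n/2}$ on each side. On the orbifold these manipulations are performed on the local uniformizing charts via \eqref{pb4.7}, introducing only the finite multiplicity $|G_U|$. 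A crude choice of $M$ of order $2m + 2n + 1$ in the Sobolev embedding produces a polynomial factor bounded by $p^{2(2m+2n+1)}$, yielding the claimed estimate. The principal obstacle is the careful bookkeeping of these $p$-factors from both the delta-section estimate \eqref{e2.19} and the two-variable Sobolev embedding in order to reach the stated exponent; the bound is deliberately coarse and is more than enough for the Berezin-Toeplitz quantization applications that follow.
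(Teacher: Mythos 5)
Your proof is correct and follows the same strategy the paper invokes by citing \cite[Proposition 1.2]{ma-ma08}: bound the Borel function $\wi{F}_k - \mathbf 1_{[-C_L,C_L]}$ on $\sigma(\Delta_p)$ using the Taylor vanishing of $\wi{F}_k$ at the origin on the low-energy interval $[-C_L,C_L]$ and the Schwartz decay of $\wi{F}_k$ on $[2p\mu_0-C_L,\infty)$, then pass from the resulting $L^2$-operator bounds on $\Delta_p^{q}\big(\wi{F}_k(p^{-1/2}\Delta_p)-P_{\mathcal H_p}\big)$ to pointwise $\mathscr{C}^m$-bounds on the Schwartz kernel via elliptic regularity and Sobolev embedding carried out in the local uniformizing charts and patched with \eqref{pb4.7}. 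Your use of the refined $p$-scaled Sobolev machinery of Section~\ref{norm} (the norms $\|\cdot\|_{p,m}$, Propositions~\ref{p:Sobolev} and~\ref{p:delta}) in fact produces a sharper polynomial prefactor than the deliberately coarse $p^{2(2m+2n+1)}$ appearing in \eqref{pb4.21}, which is inherited from the unscaled elliptic estimates of the cited reference; this is an improvement in bookkeeping rather than a different argument.
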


Using \eqref{0c3}, \eqref{0c5}, and the property of 
the finite propagation speed of solutions of hyperbolic equations
\cite[Appendix D.2]{MM07} %\cite[\S 7.8]{CP}, \cite[\S 4.4]{T1} 
(which still holds on orbifolds as pointed out in \cite{Ma05})
%and proved in \cite[Appendix D.2]{MM07})),
it is clear that for $x,x'\in X$, 
$\wi{F}_k\big(\frac{1}{\sqrt{p}}\Delta_{p}\big)(x,\cdot)$
only depends on
the restriction of $\Delta_{p}$ to $B^X(x,\varepsilon p^{-\frac{1}{4}})$,
and $\wi{F}_k\big(\frac{1}{\sqrt{p}}\Delta_{p}\big)(x,x')= 0$,
if $d(x, x') \geqslant\varepsilon p^{-\frac{1}{4}}$.

Consider an open set $U\subset X$ with an orbifold chart
$(G_U,\widetilde{U})\stackrel{\tau_U}{\longrightarrow} U$.
Let $$U_1=\{ x\in U, d(x,\partial U)<2\varepsilon\}.$$ 
For any $x_0\in U_1$,
the exponential map $\exp_{\widetilde x_0}$ is a diffeomorphism from 
$B^{T_{\widetilde x_0}\widetilde{U_1}}(0,2\varepsilon)$ onto 
$B^{\widetilde{U_1}}(\widetilde x_0,2\varepsilon)$ which is 
$G_{x_0}$-equivariant. Thus we can extend everything from 
$B^{T_{\widetilde x_0}\widetilde{U_1}}(0,2\varepsilon)$ to 
$\widetilde{X_0}:=T_{\widetilde x_0}\widetilde{U_1}$ as in 
Section~\ref{expansions} (cf.\  \cite[\S 1.2]{ma-ma08}) which
is automatically $G_{x_0}$-equivariant. Let $P_{\widetilde{X_0},p}$
be the spectral projection of the renormalized Bochner-Laplacian
$\Delta_p^{\widetilde{X_0}}$ on 
$\mathscr{C}^\infty(\widetilde{X_0}, L_0^p\otimes E_0)$, 
corresponding to the interval $[-C_{L_0},C_{L_0}]$, and 
$P_{\widetilde{X_0},p}(\wi{x},\wi{y})$ be the Schwartz kernel of 
$P_{\widetilde{X_0},p}$ with respect to the volume form 
$dv_{\widetilde{X_0}}$. Let $P_{X_0,p}(x,y)$ be the corresponding 
object on $G_{x_0}\setminus T_{\widetilde x_0}\widetilde{U_1}$, then, 
by \eqref{pb4.7}, we have
\begin{equation}\label{pb4.7a}
P_{X_0,p}(x,y)= \sum_{g\in G_{x_0}} (g,1)
P_{\widetilde{X_0},p}(g^{-1}\wi{x},\wi{y}^{\,\prime}).
\end{equation}
By Proposition \ref{0t3.0}, we get the analog of
\cite[Proposition 1.3]{ma-ma08}: for any $\ell, m\in\mathbb N$,
there exists $C_{\ell,m}>0$ such that, for any $x,y \in B(x_0,\varepsilon)$ 
and $p\in \mathbb N^{*}$
\begin{equation}\label{pb4.7b}
|P_{p}(x,y)-P_{X_0,p}(x,y)|\leq C_{\ell,m}p^{-\ell}.
\end{equation}

%%%%%%%%%%%%%%%%%%%%%%%%%%%%%%%%%%%%%%
\subsection{Berezin-Toeplitz quantization on symplectic orbifolds}
\label{pbs4.3}

We apply now the results of Section \ref{pbs4.2} to establish
the Berezin-Toeplitz quantization on symplectic orbifolds
by using as quantum spaces the spaces $\mH_p$.
We use the notations and assumptions of that Section.
We will closely follow and slightly modify the arguments 
of \cite[\S 6.3, 6.4]{ma-ma08a}.

Thus we have the following definition.
%-------------
\begin{defn}\label{toet6.0}
A \textbf{\em Toeplitz operator}
is a sequence $\{T_p\}_{p\in \mathbb N}$ of bounded linear operators
\begin{equation}\label{toe6.1}
T_{p}:L^2(X, L^p\otimes E)\longrightarrow L^2(X, L^p\otimes E)\,,
\end{equation}
satisfying the following conditions:
\begin{description}
\item[(i)] For any $p\in \mathbb N$, we have
\begin{align}\label{e:5.17}
T_p=P_{\mathcal H_p}T_pP_{\mathcal H_p}.
\end{align}
\item[(ii)] There exists a sequence 
$g_l\in \mathscr{C}^\infty(X,\operatorname{End}(E))$ such that
\begin{align}\label{e:5.18}
T_p=P_{\mathcal H_p}\Big(\sum_{l=0}^\infty p^{-l}g_l\Big)
P_{\mathcal H_p}+\mathcal O(p^{-\infty}),
\end{align}
that is, for any  $k\geq 0$ there exists $C_k>0$ such that
\begin{align}\label{e:5.19}
\Big\|T_p-P_{\mathcal H_p}\Big(\sum_{l=0}^k p^{-l}g_l\Big)
P_{\mathcal H_p}\Big\|\leq C_kp^{-k-1}.
\end{align}
\end{description}
\end{defn}
%-------------
For any section $f\in \mathscr{C}^{\infty}(X,\End(E))$,
  the  \textbf{\em Berezin-Toeplitz quantization} of $f$ is defined by
\begin{equation}\label{toe6.3}
T_{f,\,p}:L^2(X,L^p\otimes E)\longrightarrow L^2(X,L^p\otimes E)\,,
\quad T_{f,\,p}=P_p\,f\,P_p\,.
\end{equation}
The Schwartz kernel of $T_{f,\,p}$ is given by
\begin{equation} \label{toe2.5}
T_{f,\,p}(x,x')=\int_XP_p(x,x'')f(x'')P_p(x'',x')\,dv_X(x'')\,.
\end{equation}

%---------------------------------------------------------------------
\begin{lemma} \label{toet6.1}
For any $\varepsilon_0>0$ and any $l,m\in\NN$ there exists 
$C_{l,m,{\varepsilon}}>0$ such that
%-------------------------------------------------------------------------
\begin{equation} \label{toe6.4}
\big|T_{f,\,p}(x,x')\big|_{\mathscr{C}^m(X\times X)}
\leqslant C_{l,m,{\varepsilon}}p^{-l}
\end{equation}
%---------------------------------------------------------------------
for all $p\geqslant 1$ and all $(x,x')\in X\times X$
with $d(x,x')>\varepsilon_0$.
\end{lemma}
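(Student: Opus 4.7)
The plan is first to obtain rapid off-diagonal decay for the Bergman kernel itself, and then propagate this through the defining integral \eqref{toe2.5} of $T_{f,p}$. Specifically, I would first establish that for any $\varepsilon_1>0$ and any $l,m\in\NN$, there exists $C_{l,m,\varepsilon_1}>0$ such that whenever $d(x,x')\geq\varepsilon_1$ and $p$ is large,
\[
|P_p(x,x')|_{\mathscr{C}^m(X\times X)}\leq C_{l,m,\varepsilon_1}\,p^{-l}.
\]
This follows by combining Proposition \ref{0t3.0} with the finite-propagation-speed property recalled immediately after it: the kernel of $\wi F_k\bigl(\tfrac{1}{\sqrt p}\Delta_p\bigr)$ is supported where $d(x,x')\leq \varepsilon p^{-1/4}$. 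For $p$ large enough that $\varepsilon p^{-1/4}<\varepsilon_1/2$, this kernel vanishes identically on the open set $V=\{(x,x')\in X\times X: d(x,x')>\varepsilon_1/2\}$, so on $V$ the kernel $P_p$ equals the remainder in \eqref{pb4.21}, and the $\mathscr{C}^m$ bound is $O(p^{-k/2+C(m,n)})$. Taking $k$ large gives any desired polynomial decay rate.

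Second, I would invoke the standard uniform on-diagonal estimate
\[
|P_p(x,y)|_{\mathscr{C}^m(X\times X)}\leq C_m\,p^{n+m/2},
\]
which holds for all $(x,y)\in X\times X$. This is extracted from the orbifold version of the full near-diagonal asymptotic expansion of Theorem \ref{t:main}, as furnished by \eqref{pb4.7b} together with the model kernel bounds \eqref{e:Bergman} and the near-diagonal analysis of Section \ref{expansions}.

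Third, fix $(x,x')$ with $d(x,x')>\varepsilon_0$ and differentiate \eqref{toe2.5} under the integral sign: for any multi-indices with $|\alpha|+|\beta|\leq m$,
\[
\nabla^{\alpha}_{x}\nabla^{\beta}_{x'}T_{f,p}(x,x')=\int_X \bigl(\nabla^{\alpha}_{x}P_p\bigr)(x,y)\,f(y)\,\bigl(\nabla^{\beta}_{x'}P_p\bigr)(y,x')\,dv_X(y).
\]
Split $X=A\cup A^c$, where $A=\{y\in X:d(x,y)\leq\varepsilon_0/3\}$. On $A$, the triangle inequality gives $d(y,x')\geq 2\varepsilon_0/3$, so Step 1 (with $\varepsilon_1=2\varepsilon_0/3$) yields $|(\nabla^{\beta}_{x'}P_p)(y,x')|\leq C_l p^{-l}$, while Step 2 bounds $|(\nabla^{\alpha}_{x}P_p)(x,y)|\leq C p^{n+m/2}$. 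On $A^c$ the roles of $x$ and $x'$ are interchanged. Summing and using that $X$ is compact (hence $\|f\|_{L^\infty}$ and $\mathrm{vol}(X)$ are finite) yields $|T_{f,p}(x,x')|_{\mathscr{C}^m}\leq C\,p^{-l+n+m/2}$, and since $l$ is arbitrary this gives $O(p^{-l'})$ for any $l'$.

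The main obstacle is Step 1: one must be careful that the estimate \eqref{pb4.21} is a $\mathscr{C}^m$-norm on the full product $X\times X$ (not merely pointwise), so that on the open region $V$ where $\wi F_k(\tfrac{1}{\sqrt p}\Delta_p)$ vanishes identically, its vanishing transfers to a pointwise $\mathscr{C}^m$ estimate on $P_p$ itself. This is automatic once $V$ is open and contains a full neighborhood of $(x,x')$, which is why choosing the threshold $\varepsilon_1/2$ rather than $\varepsilon_1$ is essential. Once Step 1 is secured, the remainder of the argument is routine.
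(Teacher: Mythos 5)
Your proposal is correct and follows the same approach the paper intends: the paper's proof is simply the statement ``By using Proposition~\ref{0t3.0}, the proof is exactly the same as the proof of [Lemma 4.2, ma-ma08a],'' and that cited proof proceeds precisely as you do — extract rapid off-diagonal decay of $P_p$ from the finite-propagation-speed support property of $\wi{F}_k\bigl(\tfrac{1}{\sqrt p}\Delta_p\bigr)$ together with the $\mathscr{C}^m$-estimate \eqref{pb4.21}, then split the integral in \eqref{toe2.5} according to whether $y$ is near $x$ or near $x'$, and absorb the remaining polynomial factors by choosing $k$ (hence $l$) large. Two small remarks: the point you flag as ``the main obstacle'' is indeed the right thing to notice, but the resolution is immediate, since vanishing of $\wi{F}_k(\cdots)(x,x')$ on an open neighborhood (which holds as soon as $d(x,x') > \varepsilon p^{-1/4}$, a strict inequality) forces vanishing of all derivatives there; and in Step 2 the sharp constant $p^{n+m/2}$ is not actually needed — any uniform polynomial-in-$p$ bound on $|P_p|_{\mathscr{C}^m}$, which for a compact orbifold follows directly from Proposition~\ref{0t3.0} and boundedness of the smoothing kernel $\wi{F}_k(\cdots)$, suffices since the off-diagonal factor beats any power of $p$.
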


\begin{proof}
By using Proposition \ref{0t3.0}, the proof is exactly the same as the proof
of \cite[Lemma 4.2]{ma-ma08a}.
%We proceed as in the proof of \cite[Lemma 4.2]{ma-ma08a}. 
%From \eqref{pb4.21}, \eqref{pb4.22} and \eqref{e:mainF}, it follows that, 
%for any $m\in \mathbb N$, there exist $C_m>0, M_m>0$ such that 
%for any $k\in\mathbb N$ and $p\geq 1$
%\[
%|F_k\big(\tfrac{1}{\sqrt{p}}\Delta_{p}\big) (x,x')|_{\mathscr{C}^m
%(X\times X)}<Cp^{M_m},\quad |P_p(x,x')|_{\mathscr{C}^m(X\times X)}
%<Cp^{M_m}.
%\]
%Using this fact and Proposition \ref{0t3.0}, we get that, for any 
%$k,m\in \mathbb N$, there exists $C_{k,m}>0$ such that for 
%$p\geqslant1$
%\begin{multline}\label{toe6.17}
%\Bigg|T_{f,\,p}(x,x') \\ -\int_X F_k\big(\tfrac{1}{\sqrt{p}}
%\Delta_{p}\big) (x,x'')f(x'') F_k\big(\tfrac{1}{\sqrt{p}}\Delta_{p}\big)
%(x'',x')\,dv_X(x'')\Bigg|_{\mathscr{C}^m(X\times X)}\\ 
%\leqslant C_{k,m} p^{-\frac{k}{2} +2(2m+2n+1)}\,.
%\end{multline}
%Since $F_k\big(\frac{1}{\sqrt{p}}\Delta_{p}\big)(x,x')= 0$,
%if $d(x, x') \geqslant\varepsilon p^{-\frac{1}{4}}$, 
%this completes the proof.
\end{proof}

Next we obtain the asymptotic expansion of  the kernel 
$T_{f,\,p}(x,x')$ in a neighborhood of the diagonal. We will 
use  \cite[Condition 6.7]{ma-ma08a}.

Recall that the (proper) orbifold tangent bundle $TX$ on an orbifold $X$
is defined by a family of $G_U$-equivariant vector bundles
$(G_U, T\widetilde{U} \rightarrow \widetilde{U})$, for
$U\in \mathcal{U}$.
Consider the fiberwise product 
$TX\times_X TX=\{(Z,Z^\prime)\in 
T_{x_0}X\times T_{x_0}X : x_0\in X\}$. 
Let $\pi : TX\times_X TX\to X$ be the natural projection given by  
$\pi(Z,Z^\prime)=x_0$. We say that 
$Q_{r,\,x_0}\in \End( E)_{x_0}[\wi{Z},\wi{Z}^{\prime}]$,
if for any $U\in \mathcal U$, it induces a smooth section 
$Q_{r,\,x_0}\in \End( E)_{x_0}[\wi{Z},\wi{Z}^{\prime}]$ 
of the vector bundle $\pi^*(\operatorname{End}(E))$ on 
$T\wi{U}\times_{\wi{U}} T\wi{U}$ defined for all $\wi{x}_0\in \wi{U}$ 
and $Z,Z^\prime\in T_{\wi{x}_0}\wi{U}$ and polynomial in 
$Z,Z^\prime\in T_{\wi{x}_0}\wi{U}$.

Let $\{\Xi_p\}_{p\in\NN}$ be a sequence of linear operators
$\Xi_p: L^2(X,L^p\otimes E)\longrightarrow L^2(X,L^p\otimes E)$
with smooth kernel $\Xi_p(x,y)$ with respect to $dv_X(y)$.

\begin{condition}\label{coe2.71}
Let $k\in\NN$. Assume that there exists a family
$\{Q_{r,\,x_0}\}_{0\leqslant r\leqslant k,\,x_0\in X}$, satisfying
the conditions:
\begin{itemize}
\item[(a)] $Q_{r,\,x_0}\in \End( E)_{x_0}[\wi{Z},\wi{Z}^{\prime}]$\,, 
$\wi{Z},\wi{Z}^\prime\in T_{\wi{x}_0}X$,
\item[(b)] $\{Q_{r,\,x_0}\}_{x_0\in X}$ is smooth with respect
to the parameter $x_0\in X$,
\end{itemize}
and, for every open set $U\in\mathcal{U}$ and every orbifold chart
$(G_U,\widetilde{U})\stackrel{\tau_U}{\longrightarrow} U$,
a sequence of kernels
\[
\{\wi{\Xi}_{p, U}(\wi{x},\wi{x}^{\,\prime})\in
\mathscr{C}^\infty(\wi{U} \times \wi{U},
\pi_1^* (\wi{L}^p\otimes \wi{E})\otimes
\pi_2^* (\wi{L}^p\otimes\wi{E})^*)\}_{p\in\NN}
\]
such that, for every $\varepsilon''>0$ and every
$\wi{x},\wi{x}^{\,\prime}\in \wi{U}$,
\begin{equation} \label{toe6.5}
\begin{split}
& (g,1)\wi{\Xi}_{p, U}(g^{-1}\wi{x},\wi{x}^{\,\prime})=
(1,g^{-1})\wi{\Xi}_{p, U}(\wi{x},g\wi{x}^{\,\prime})
\quad \text{for any } \, \, g\in G_{U}\; \text{(cf.\  \eqref{pb4.3})}, \\
&\wi{\Xi}_{p, U}(\wi{x},\wi{x}^{\,\prime})= \cO(p^{-\infty}) \quad
 \quad\text{for}\, \,   d(x,x^\prime)>\varepsilon'',\\
&\Xi_{p}(x,x^\prime)
= \sum_{g\in G_{U}} (g,1)\wi{\Xi}_{p, U}(g^{-1}\wi{x},\wi{x}^{\,\prime})
+ \cO(p^{-\infty}),
\end{split}\end{equation}
and moreover, for every relatively compact open subset 
$\wi{V}\subset \wi{U}$, the following relation is valid for any 
$\wi{x}_0\in \wi{V}$:
\begin{equation} \label{toe6.6}
p^{-n}\, \wi{\Xi}_{p,U,\wi{x}_0}(\wi{Z},\wi{Z}^\prime)
\cong \sum_{r=0}^k (Q_{r,\,\wi{x}_0} \mathcal P_{\wi{x}_0})
(\sqrt{p}\wi{Z},\sqrt{p}\wi{Z}^{\prime})p^{-\frac{r}{2}}
+\mO(p^{-\frac{k+1}{2}})\,.
\end{equation}
which means that there exist $\varepsilon^\prime>0$ and $C_0>0$ 
with the following property:
for any $m\in \mathbb N$, there exist $C>0$ and $M>0$ such that for 
any $\wi{x}_0\in \wi{V}$, $p\geq 1$ and $\wi{Z},\wi{Z}^\prime
\in T_{\wi{x}_0}\wi{U}$, $|\wi{Z}|, |\wi{Z}^\prime|<\varepsilon^\prime$, 
we have with $\kappa$  in \eqref{e:kappa},
\begin{multline}\label{e:5.25}
\Bigg|p^{-n}\, \wi{\Xi}_{p,U,\wi{x}_0}(\wi{Z},\wi{Z}^\prime)
\kappa^{\frac 12}(\wi{Z})\kappa^{\frac 12}(\wi{Z}^\prime)
-\sum_{r=0}^k(Q_{r,\,\wi{x}_0} \mathcal P_{\wi{x}_0})
(\sqrt{p}\wi{Z},\sqrt{p}\wi{Z}^{\prime})p^{-\frac{r}{2}}
\Bigg|_{\mathscr{C}^{m}(\wi{V})}\\
\leq Cp^{-\frac{k+1}{2}}(1+\sqrt{p}|\wi{Z}|+\sqrt{p}|\wi{Z}^\prime|)^M
\exp(-\sqrt{C_0p}|\wi{Z}-\wi{Z}^\prime|)+\mathcal O(p^{-\infty}).
\end{multline}
\end{condition}

\begin{notation}\label{noe2.71}
If the sequence $\{\Xi_p : L^2(X,L^p\otimes E)\longrightarrow
L^2(X,L^p\otimes E)\}_{p\in\NN}$ satisfies Condition \ref{coe2.71},
 we write
\begin{equation} \label{toe6.7}
p^{-n}\, \Xi_{p,\,x_0}(Z,Z^\prime)\cong \sum_{r=0}^k
(Q_{r,\,x_0} \mathcal P_{x_0})(\sqrt{p}Z,\sqrt{p}Z^{\prime})p^{-\frac{r}{2}}
+\mO(p^{-\frac{k+1}{2}})\,.
\end{equation}
\end{notation}

As in \cite[Lemma 6.9]{ma-ma08a}, one can show that the smooth family
$Q_{r,\,x_0}\in \End( E)_{x_0}[\wi{Z},\wi{Z}^{\prime}]$
in Condition \ref{coe2.71} is uniquely determined by $\Xi_p$.

\begin{thm} \label{toet2.2}
There exist polynomials 
$J_{r,\,x_0}\in \End(E)_{x_0}[\wi{Z},\wi{Z}^{\prime}]$ 
such that, for any $k\in \NN$, $Z,Z^\prime \in T_{x_0}X$, 
$\abs{Z},\abs{Z^{\prime}}< \varepsilon$,
we have
\begin{equation} \label{toe2.9}
p^{-n} P_{p,\,x_0}(Z,Z^\prime) \cong \sum_{r=0}^{k-1}
(J_{r,\,x_0} \mathcal P_{x_0})(\sqrt{p}Z,\sqrt{p}Z^{\prime})p^{-\frac{r}{2}}
+\mO(p^{-\frac{k}{2}})\,,
\end{equation}
 in the sense of Notation \ref{noe2.71}.
\end{thm}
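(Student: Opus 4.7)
The plan is to reduce Theorem \ref{toet2.2} to Theorem \ref{t:main} applied on a suitable smooth covering of each orbifold chart. Given $x_0\in X$, choose an orbifold chart $(G_U,\widetilde{U})\xrightarrow{\tau_U} U$ with $x_0\in U_1$ and lift $\widetilde{x}_0\in \tau_U^{-1}(x_0)$. As in the construction before Proposition \ref{0t3.0} (following Section \ref{expansions}), extend the data from the ball $B^{T_{\widetilde{x}_0}\widetilde{U_1}}(0,2\varepsilon)$ to $\widetilde{X}_0:=T_{\widetilde{x}_0}\widetilde{U_1}\cong\mathbb R^{2n}$ using the cut-off function $\varphi_\varepsilon$, to obtain Hermitian bundles $(L_0,h^{L_0},\nabla^{L_0})$, $(E_0,h^{E_0},\nabla^{E_0})$ and metric $g_0$ on $\widetilde{X}_0$ with bounded geometry, satisfying \eqref{e:4.2}; the action of $G_{x_0}$ lifts canonically, and the whole construction is $G_{x_0}$-equivariant.

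Theorem \ref{t:main} (with $q=0$) then provides polynomials $J_{r,\widetilde{x}_0}(\widetilde Z,\widetilde Z')\in\End(E)_{\widetilde{x}_0}[\widetilde Z,\widetilde Z']$, depending smoothly on the parameter $\widetilde{x}_0$, with the same parity as $r$ and degree at most $3r$, such that
\begin{equation*}
\Bigg|p^{-n}P_{\widetilde{X}_0,p}(\widetilde Z,\widetilde Z')\kappa^{1/2}(\widetilde Z)\kappa^{1/2}(\widetilde Z')-\sum_{r=0}^{k-1}(J_{r,\widetilde{x}_0}\mathcal P_{\widetilde{x}_0})(\sqrt p\widetilde Z,\sqrt p\widetilde Z')p^{-r/2}\Bigg|_{\mathscr C^m}\leq Cp^{-k/2}(1+\sqrt p|\widetilde Z|+\sqrt p|\widetilde Z'|)^M e^{-c\sqrt{\mu_0 p}|\widetilde Z-\widetilde Z'|}+\mathcal O(e^{-c_0\sqrt p})
\end{equation*}
for $|\widetilde Z|,|\widetilde Z'|<\varepsilon$, uniformly in $\widetilde{x}_0$. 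Since the entire extension procedure is $G_{x_0}$-equivariant and the model operator $\mathcal L_0$ transforms covariantly under $G_{x_0}$, the polynomials $J_{r,\widetilde{x}_0}$ assemble into a well-defined smooth family $J_{r,x_0}\in\End(E)_{x_0}[\widetilde Z,\widetilde Z']$ in the orbifold sense.

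Next I would identify $\widetilde\Xi_{p,U}(\widetilde x,\widetilde y):=P_{\widetilde{X}_0,p}(\widetilde x,\widetilde y)$ and verify the three items of Condition \ref{coe2.71} for $\Xi_p=P_p$: the equivariance \eqref{toe6.5}(a) is automatic from $G_{x_0}$-equivariance of the extension; the off-diagonal bound \eqref{toe6.5}(b) follows from Theorem \ref{t:mainPp} applied to $P_{\widetilde{X}_0,p}$ (noting $\widetilde{X}_0$ has bounded geometry and satisfies \eqref{mu} by \eqref{e:4.2}); the summation formula \eqref{toe6.5}(c) comes from \eqref{pb4.7a} combined with \eqref{pb4.7b}, which shows that $P_p(x,y)-P_{X_0,p}(x,y)=\mathcal O(p^{-\infty})$ on $B(x_0,\varepsilon)$. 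The near-diagonal expansion \eqref{e:5.25} is then precisely the inequality above, after absorbing the $\kappa^{1/2}$ factors into the polynomial coefficients (a standard manipulation, as in \cite[\S1]{ma-ma08}), proving \eqref{toe2.9} in the sense of Notation \ref{noe2.71}.

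The main technical point is the descent of the polynomials and kernels from the $G_{x_0}$-equivariant picture on $\widetilde{X}_0$ to bona fide orbifold objects, which requires the extension of the geometric data from $B^{T_{\widetilde{x}_0}\widetilde{U_1}}(0,2\varepsilon)$ to $\widetilde{X}_0$ to commute with the $G_{x_0}$-action; this is ensured because the cut-off $\rho(|Z|/\varepsilon)$ and the radial factor in \eqref{e:4.1} are $G_{x_0}$-invariant. The remaining work—verifying that $J_{r,x_0}$ is independent of the chart and the choice of lift $\widetilde{x}_0$—follows from the uniqueness statement in Condition \ref{coe2.71} (analogous to \cite[Lemma 6.9]{ma-ma08a}), and the parity/degree properties of $J_{r,x_0}$ are inherited directly from those of $J_{r,\widetilde{x}_0}$ given by Theorem \ref{t:main}.
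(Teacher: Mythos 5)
Your proposal follows essentially the same path as the paper's (very compressed) proof: apply Theorem~\ref{t:main} to the model kernel $P_{\widetilde{X_0},p}$ on the covering $\widetilde{X_0}$, pass to the orbifold kernel via the $G_{x_0}$-summation formula \eqref{pb4.7a} together with the comparison estimate \eqref{pb4.7b}, and read off that Condition~\ref{coe2.71} is satisfied. The additional details you supply — the $G_{x_0}$-equivariance of the extension in \eqref{e:4.1}, the parity and degree inheritance of $J_{r,x_0}$, the chart-independence via the uniqueness statement — are all correct and merely spell out what the paper leaves implicit when it cites \cite[Lemmas 4.5, 6.10]{ma-ma08a}.
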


\begin{proof}
By Theorem~\ref{t:main} for $P_{\widetilde{X_0},p}(\wi{x},\wi{y})$, 
\eqref{pb4.7a} and \eqref{pb4.7b}, we get Theorem~\ref{toet2.2} 
as the analog of \cite[Lemmas 4.5, 6.10]{ma-ma08a}.
\end{proof}

From \eqref{toe2.5} and \eqref{toe2.9}, we deduce an analog 
of \cite[Lemmas 4.6, 4.7 and 6.10]{ma-ma08a}.

\begin{lemma} \label{toet2.3}
Let $f\in \mathscr{C}^\infty(X,\End(E))$.
There exists a family $\{Q_{r,\,x_0}(f)\}_{r\in\NN,\,x_0\in X}$ such that
\begin{itemize}
\item[(a)] $Q_{r,\,x_0}(f)\in\End(E)_{x_0}[Z,Z^{\prime}]$
are polynomials with the same parity as $r$,
\item[(b)] $\{Q_{r,\,x_0}(f)\}_{r\in\NN,\,x_0\in X}$ is smooth with
respect to $x_0\in X$,
\item[(c)] for every $k\in \NN$, $x_0\in X$,
 $Z,Z^\prime \in T_{x_0}X$, $\abs{Z},\abs{Z^{\prime}}<\varepsilon/2$ 
 we have
 %--------------------------------------------------------------------------
\begin{equation} \label{toe2.13}
p^{-n}T_{f,\,p,\,x_0}(Z,Z^{\prime})
\cong \sum^k_{r=0}(Q_{r,\,x_0}(f)\mathcal P_{x_0})
(\sqrt{p}Z,\sqrt{p}Z^{\prime})
p^{-\frac{r}{2}} + \mO(p^{-\frac{k+1}{2}})\,,
\end{equation}
%--------------------------------------------------------------------------
in the sense of Notation \ref{noe2.71}.
\end{itemize}
Here $Q_{r,\,x_0}(f)$ are expressed by
\begin{equation} \label{toe2.14}
Q_{r,\,x_0}(f) = \sum_{r_1+r_2+|\alpha|=r}
  \cK\Big[J_{r_1,\,x_0}\;,\;
\frac{\partial ^\alpha f_{\,x_0}}{\partial Z^\alpha}(0)
\frac{Z^\alpha}{\alpha !} J_{r_2,\,x_0}\Big]\,.
\end{equation}
%--------------------------------------------------------------------------
Especially,
\begin{align} \label{toe2.15}
Q_{0,\,x_0}(f)= f(x_0) .
\end{align}
\begin{equation} \label{toe2.20}
Q_{1,\,x_0}(f)= f(x_0) J_{1,\,x_0}  + \cK\Big[J_{0,\,x_0},
\frac{\partial f_{x_0}}{\partial Z_j}(0) Z_j J_{0,\,x_0}\Big].
\end{equation}
\end{lemma}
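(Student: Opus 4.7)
My plan is to follow the classical composition argument for Toeplitz operators, adapted to orbifolds as in \cite[\S 6.3]{ma-ma08a}. First, given $x, x' \in X$ with $d(x,x') < \varepsilon$, I fix $x_0$ near both points and write $x = \exp_{\widetilde x_0}(Z)$, $x' = \exp_{\widetilde x_0}(Z')$ in a lifted orbifold chart $(G_U,\widetilde U)\to U$. By Lemma \ref{toet6.1}, the integral \eqref{toe2.5} over $X$ can be replaced, up to $\mathcal O(p^{-\infty})$, by an integral over a ball $B(x_0,\varepsilon_0)$ for arbitrarily small $\varepsilon_0$. Lifting to $\widetilde U$ via \eqref{pb4.7a} and \eqref{pb4.7b}, the problem reduces to computing a convolution of two model Bergman kernels on $T_{\widetilde x_0}\widetilde U$ with $f$ inserted between them.

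Next, I substitute the expansion \eqref{toe2.9} in both Bergman factors and Taylor expand $f \circ \exp_{\widetilde x_0}$ in $Z''$ at the origin. After the rescaling $Z'' \mapsto Z''/\sqrt p$, each term of the integrand becomes a polynomial in $\sqrt p\, Z$, $\sqrt p\, Z'$, $Z''$ multiplied by the Gaussian product $\mathcal P_{x_0}(\sqrt p\, Z, Z'')\mathcal P_{x_0}(Z'',\sqrt p\, Z')$. The Gaussian integral in $Z''$ is then explicit: since $\mathcal P_{x_0}$ is the reproducing kernel for $\ker \mathcal L_0$ (see \eqref{e:Bergman}--\eqref{e:L0}), the result is again a polynomial multiple of $\mathcal P_{x_0}(\sqrt p\, Z,\sqrt p\, Z')$, with the polynomial computed by the composition operation $\mathcal K[\cdot,\cdot]$. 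Collecting the terms of order $p^{-r/2}$, with $r = r_1 + r_2 + |\alpha|$ coming from the expansion indices of the two Bergman kernels and the Taylor expansion of $f$, produces exactly \eqref{toe2.14}. The quantitative remainder, in the sense of Notation \ref{noe2.71}, is obtained by combining Taylor remainder bounds for $f$ with the uniform Gaussian decay of $\mathcal P_{x_0}$, which absorbs the polynomial factors.

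The listed properties follow routinely once \eqref{toe2.14} is established. Smoothness in $x_0$ (b) and the polynomial nature (a) are inherited from the corresponding properties of the $J_{r,x_0}$ of Theorem \ref{toet2.2} and of $f$. The parity claim is a consequence of the fact that $\mathcal K[\cdot,\cdot]$ preserves total parity in $(Z,Z')$, and in each summand of \eqref{toe2.14} the total parity equals $r_1 + r_2 + |\alpha| = r$, using that $J_{r_1,x_0}$ and $J_{r_2,x_0}$ have parities $r_1$ and $r_2$ already by Theorem \ref{toet2.2}. For $r = 0$ only the triple $r_1 = r_2 = |\alpha| = 0$ contributes; using $J_{0,x_0} = \mathrm{Id}_{E_{x_0}}$, read off the leading term of Theorem \ref{toet2.2}, together with the reproducing identity $\mathcal P_{x_0} \circ \mathcal P_{x_0} = \mathcal P_{x_0}$ (i.e.\ $\mathcal K[\mathrm{Id},\mathrm{Id}] = \mathrm{Id}$), yields \eqref{toe2.15}. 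For $r = 1$, I enumerate the three admissible triples $(r_1,r_2,\alpha)$ and simplify using $J_{0,x_0} = \mathrm{Id}$, arriving at \eqref{toe2.20}.

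The main subtlety is the orbifold bookkeeping at singular points. There the lifted kernel at $\widetilde x_0$ is a $G_{x_0}$-sum of contributions $(g,1)\widetilde P_p(g^{-1}\widetilde x,\widetilde x'')$, and the composition \eqref{pb4.9} threads two such sums through the integral in $Z''$. The off-diagonal estimate of Lemma \ref{toet6.1}, combined with the identification \eqref{pb4.7b}, localizes the contribution to the sector $g = 1$ modulo $\mathcal O(p^{-\infty})$, so that the smooth-point argument above transfers verbatim; tracking the $\mathcal O(p^{-\infty})$ tails through the rescaling and change of variables is the step that requires the most care.
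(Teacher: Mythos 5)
Your argument reproduces the composition computation that the paper compresses into a citation of \cite[Lemmas 4.6, 4.7 and 6.10]{ma-ma08a}: localize via Lemma \ref{toet6.1}, substitute the off-diagonal Bergman expansion \eqref{toe2.9} into both factors of \eqref{toe2.5}, Taylor-expand $f$, rescale by $\sqrt{p}$, and read off \eqref{toe2.14} from the $\mathcal K$-calculus and the orbifold lifting of Condition \ref{coe2.71}; this is the paper's (and the standard) approach. One small supplement: passing from \eqref{toe2.14} to \eqref{toe2.20} uses, besides $J_{0,x_0}=\mathrm{Id}$, the identity $\mathcal K[J_{1,x_0},J_{0,x_0}]+\mathcal K[J_{0,x_0},J_{1,x_0}]=J_{1,x_0}$ obtained by comparing coefficients at order $p^{-1/2}$ in $P_{\mathcal H_p}\circ P_{\mathcal H_p}=P_{\mathcal H_p}$.
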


Here, for any polynomials $F,G\in \mathbb C[Z,Z^\prime]$, 
the polynomial $\mathcal K[F,G]\in \mathbb C[Z,Z^\prime]$
is defined by the relation
\[
((F\mathcal P_{x_0})\circ (G\mathcal P_{x_0}))(Z,Z^\prime)=(\mathcal K[F,G]
\mathcal P_{x_0})(Z,Z^\prime),
\]
where $((F\mathcal P_{x_0})\circ (G\mathcal P_{x_0}))(Z,Z^\prime)$ is the kernel 
of the composition $(F\mathcal P_{x_0})\circ (G\mathcal P_{x_0})$ of the operators 
$F\mathcal P_{x_0}$ and $G\mathcal P_{x_0}$ in $L^2(T_{x_0}X)$ with kernels 
$(F\mathcal P_{x_0})(Z,Z^\prime)$ and $(G\mathcal P_{x_0})(Z,Z^\prime)$, respectively.
%------------------------------------------------------------------

Now we can proceed by a word for word repetition of the corresponding 
arguments in \cite{ma-ma08a}. So we just give statements of the main
results.

First, the following analog of \cite[Theorem 6.11]{ma-ma08a} provides 
a useful criterion for a given family to be a Toeplitz operator.
%-----------------------------------------------------------------------
\begin{thm}\label{toet6.4}
Let $\{T_p:L^2(X,L^p\otimes E)\longrightarrow L^2(X,L^p\otimes E)\}$
be a family of bounded linear operators which satisfies
the following three conditions:
\begin{itemize}
\item[(i)] For any $p\in \NN$,  
$P_{\mathcal H_p}\,T_p\,P_{\mathcal H_p}=T_p$\,.
\item[(ii)] For any $\varepsilon_0>0$ and any $l\in\NN$,
there exists $C_{l,\varepsilon_0}>0$ such that
for all $p\geqslant 1$ and all $(x,x')\in X\times X$
with $d(x,x')>\varepsilon_0$,
%--------------------------------------------------------------------
\begin{equation} \label{toe6.21}
|T_{p}(x,x')|\leqslant C_{l,{\varepsilon_0}}p^{-l}.
\end{equation}
%-----------------------------------------------------------------------
\item[(iii)] There exists a family of polynomials
$\{\mQ_{r,\,x_0}\in\End(E)_{x_0}[Z,Z^{\prime}]\}_{x_0\in X}$
such that\,{\rm:}
\begin{itemize}
\item[(a)] each $\mQ_{r,\,x_0}$ has the same parity as $r$,

\item[(b)] the family is smooth in $x_0\in X$ and

\item[(c)] for every $k\in\NN$, we have
\begin{equation} \label{toe6.22}
p^{-n}T_{p,\,x_0}(Z,Z^{\prime})\cong
\sum^k_{r=0}(\mQ_{r,\,x_0}\mathcal P_{x_0})
(\sqrt{p}Z,\sqrt{p}Z^{\prime})p^{-\frac{r}{2}} + \mO(p^{-\frac{k+1}{2}})
\end{equation}
in the sense of \eqref{toe6.7}.
\end{itemize}
\end{itemize}

Then $\{T_p\}$ is a Toeplitz operator.
\end{thm}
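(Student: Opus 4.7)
The plan is to construct the sections $g_l \in \mathscr{C}^\infty(X,\operatorname{End}(E))$ inductively by matching coefficients in the asymptotic expansion (iii), following the strategy of \cite[Theorem 6.11]{ma-ma08a} but with the Bergman kernel of the spin$^c$ Dirac operator replaced by the generalized Bergman kernel of $\Delta_p$. The key tools will be Theorem \ref{toet2.2} (off-diagonal expansion of $P_p$), Lemma \ref{toet2.3} together with the explicit formula \eqref{toe2.14} for the Toeplitz coefficients $Q_{r,x_0}(f)$, and Lemma \ref{toet6.1} (off-diagonal decay of Toeplitz operators). Condition (i) will be exploited repeatedly to constrain the polynomial coefficients $\mathcal Q_{r,x_0}$.

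First I would settle the base case $l=0$. Using (i), the reproducing property $\mathcal P_{x_0}\circ\mathcal P_{x_0}=\mathcal P_{x_0}$ of the model projector at rescaled scale $\sqrt{p}Z$, and the leading-order part of (iii), I would argue that $\mathcal Q_{0,x_0}$ must act on the model space as multiplication by an element of $\operatorname{End}(E)_{x_0}$ followed by $\mathcal P_{x_0}$, forcing $\mathcal Q_{0,x_0}(Z,Z^\prime)\equiv g_0(x_0)$ for a smooth section $g_0\in\mathscr{C}^\infty(X,\operatorname{End}(E))$ (smoothness follows from (iii)(b)). Setting $T_p^{(1)}:=T_p-T_{g_0,p}$ and invoking Lemma \ref{toet2.3} together with \eqref{toe2.15}, the $r=0$ contributions cancel, so $T_p^{(1)}$ again satisfies (i)--(iii) but with its expansion starting one half-order later.

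Next, the inductive step. Assume $g_0,\ldots,g_{l-1}$ have been determined so that $T_p^{(l)}:=T_p-P_{\mathcal H_p}\bigl(\sum_{j=0}^{l-1}p^{-j}g_j\bigr)P_{\mathcal H_p}$ satisfies (i)--(iii) with the first $2l$ polynomial coefficients $\mathcal Q^{(l)}_{r,x_0}$ vanishing. Using \eqref{toe2.14}--\eqref{toe2.15}, the coefficient of $p^{-l}$ in $P_{\mathcal H_p}\bigl(\sum_{j} p^{-j}g_j\bigr)P_{\mathcal H_p}$ at the diagonal of the rescaled expansion is $Q_{0,x_0}(g_l)=g_l(x_0)$ plus already determined terms, so I would set $g_l(x_0):=\mathcal Q^{(l)}_{2l,x_0}(0,0)$; this yields a smooth section of $\operatorname{End}(E)$ by (iii)(b). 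The delicate point is the automatic vanishing of the odd-order coefficient $\mathcal Q^{(l+1)}_{2l+1,x_0}$ after subtraction. I expect this to be the main obstacle, and I plan to extract it from condition (i) combined with the parity assumption (iii)(a): a kernel sandwiched between $P_{\mathcal H_p}$'s has a rigidly constrained fine structure coming from the reproducing-kernel algebra of $\mathcal P_{x_0}$, in the same spirit as the proof of \cite[Lemmas 4.6, 6.10]{ma-ma08a}, so the odd half-integer contributions from the tower $\{g_j\}_{j\leq l}$ must already account for $\mathcal Q^{(l)}_{2l+1,x_0}$.

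Finally, to conclude that the constructed sequence realizes $T_p$ as a Toeplitz operator in the sense of Definition \ref{toet6.0}, I would upgrade the pointwise kernel estimates to operator-norm bounds. Lemma \ref{toet6.1} provides $\mO(p^{-\infty})$ decay for $d(x,x^\prime)>\varepsilon_0$, while for the remainder $T_p^{(k+1)}(x,x^\prime)$ near the diagonal, the rescaled expansion \eqref{e:5.25} gives the required decay in $\sqrt{p}|Z-Z^\prime|$ with prefactor $p^{-(k+1)/2}$. A Schur-type integral test then yields the operator-norm estimate \eqref{e:5.19}, completing the verification that $\{T_p\}$ is a Toeplitz operator.
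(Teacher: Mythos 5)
Your proposal follows the same route as the paper, which explicitly states that the proof is a word-for-word repetition of the argument for \cite[Theorem 6.11]{ma-ma08a} and gives no further details. You correctly identify the inductive subtraction scheme driven by Theorem \ref{toet2.2} and Lemma \ref{toet2.3}, the role of condition (i) in rigidifying $\mathcal{Q}_{r,x_0}$ through the model-projector algebra of $\mathcal{P}_{x_0}$, and the genuinely delicate point that the odd-order coefficients must vanish automatically after each subtraction, so your sketch matches the intended proof.
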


Finally, we show that the set of Toeplitz operators on a compact orbifold is
closed under the composition of operators, so forms an algebra
(an analog of \cite[Theorems 6.13 and 6.16]{ma-ma08a}).
%----------------------------------------------------------------------------
\begin{thm}\label{toet6.7}
Let $(X,\om)$ be a compact symplectic orbifold and $(L,\nabla^{L},h^L)$
be a Hermitian proper orbifold line bundle satisfying
 the prequantization condition \eqref{0.-1}.
Let $(E,\nabla^E ,h^E)$ be an arbitrary Hermitian proper orbifold
vector bundle on $X$.

Given $f,g\in \mathscr{C}^\infty(X,\End(E))$, the product of 
the Toeplitz operators
$T_{f,\,p}$ and  $T_{g,\,p}$ is a Toeplitz operator,
more precisely, it admits an asymptotic expansion
\begin{align}\label{e:5.33}
T_{f,\,p}T_{g,\,p}=\sum_{r=0}^\infty p^{-r}T_{C_r(f,g),\,p}
+ \mathcal O(p^{-\infty}),
\end{align}
where $C_r(f,g)\in \mathscr{C}^\infty(X,\End(E))$
and $C_r$ are bidifferential operators defined locally
on each covering $\wi{U}$ of an orbifold chart
 $(G_U,\widetilde{U})\stackrel{\tau_U}{\longrightarrow}U$.
In particular $C_0(f,g)=fg$.

If $f,g\in \mathscr{C}^\infty(X)$, then
\begin{align}\label{e:5.34}
[T_{f,p}, T_{g,p}]=\frac{\sqrt{-1}}{p}T_{\{f,g\},p}+\mathcal O(p^{-2}).
\end{align}
For any $f\in \mathscr{C}^\infty(X,\End(E))$, the norm of $T_{f,p}$ 
satisfies
\begin{align}\label{e:5.35}
\lim_{p\to\infty}\|T_{f,p}\|=\|f\|_{\infty}
:=\sup_{\substack{x\in X\\0\neq u\in E_x}}|f(x)(u)|_{h^E}/|u|_{h^E}.
\end{align}
\end{thm}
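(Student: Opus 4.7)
The plan is to reduce the first assertion to the criterion Theorem~\ref{toet6.4} applied to the composition $T_{f,p}T_{g,p}$, and then to extract the $C_r(f,g)$, the commutator formula and the norm estimate from the near-diagonal expansion furnished by Lemma~\ref{toet2.3}. Condition (i) of Theorem~\ref{toet6.4} is automatic since $P_{\mathcal{H}_p}T_{f,p}T_{g,p}P_{\mathcal{H}_p}=(P_{\mathcal{H}_p}fP_{\mathcal{H}_p})(P_{\mathcal{H}_p}gP_{\mathcal{H}_p})=T_{f,p}T_{g,p}$. Condition (ii) follows from writing
\[
(T_{f,p}T_{g,p})(x,x')=\int_X T_{f,p}(x,y)T_{g,p}(y,x')\,dv_X(y)
\]
and splitting the domain of integration according to whether $d(y,x)<\varepsilon_0/3$ or $d(y,x')\geq\varepsilon_0/3$: on each piece one factor is $\mathcal{O}(p^{-l})$ for all $l$ by Lemma~\ref{toet6.1}, while the other is $\mathcal{O}(p^n)$ uniformly by Theorem~\ref{toet2.2}.

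For condition (iii), I would work inside an orbifold chart $(G_U,\widetilde{U})\to U$, lift everything to $\widetilde{U}$, and use \eqref{pb4.7a}, \eqref{pb4.7b} and Proposition~\ref{0t3.0} to replace $P_p$ by the model projection $P_{\widetilde{X}_0,p}$ up to $\mathcal{O}(p^{-\infty})$. Then the composition of two such localized Toeplitz kernels on $\widetilde{X}_0$ is computed exactly as in the derivation of Lemma~\ref{toet2.3}: after the rescaling $Z\mapsto\sqrt{p}Z$, $Z'\mapsto\sqrt{p}Z'$, the individual expansions of $T_{f,p}$ and $T_{g,p}$ combine through the bilinear composition operation
\[
((F\mathcal{P}_{x_0})\circ(G\mathcal{P}_{x_0}))(Z,Z')=(\mathcal{K}[F,G]\mathcal{P}_{x_0})(Z,Z'),
\]
yielding an expansion of the form \eqref{toe6.22} with polynomial coefficients $\mathcal{Q}_{r,x_0}(f,g)$ of the same parity as $r$, smooth in $x_0$. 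The $G_U$-summation \eqref{pb4.9} preserves this expansion since both factors carry the same $G_U$-equivariance. Theorem~\ref{toet6.4} then produces \eqref{e:5.33}; iterating on the remainder, the coefficients $g_l$ in Definition~\ref{toet6.0} are functions $C_r(f,g)\in\mathscr{C}^\infty(X,\End(E))$ which, read in the chart, are bidifferential operators in $(f,g)$. The identity $C_0(f,g)=fg$ follows from \eqref{toe2.15}.

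For \eqref{e:5.34}, at a smooth point $x_0$ I compute $C_1(f,g)-C_1(g,f)$ from \eqref{toe2.20} using that $Q_{1,x_0}$ does not depend on $f,g$. The difference reduces to the commutator on $L^2(T_{x_0}X)$ of the operators with kernels $\partial_j f(x_0)Z_j J_{0,x_0}\mathcal{P}_{x_0}$ and $\partial_j g(x_0)Z_j J_{0,x_0}\mathcal{P}_{x_0}$, and the explicit form of $\mathcal{P}_{x_0}$ in \eqref{e:Bergman} together with $\mathcal{J}_{x_0}=-2\pi iJ_0$ identifies this commutator with $\sqrt{-1}\{f,g\}(x_0)\mathcal{P}_{x_0}$, so that the leading term in \eqref{toe6.22} for $[T_{f,p},T_{g,p}]$ is $p^{-1}\{f,g\}(x_0)\mathcal{P}_{x_0}$ up to the factor $\sqrt{-1}$. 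This algebraic identity is formally the same as in the compact manifold case \cite[Theorem~6.13]{ma-ma08a}, as the nontrivial elements of $G_{x_0}$ contribute only $\mathcal{O}(p^{-\infty})$ through \eqref{pb4.7a}. For \eqref{e:5.35}, the upper bound is immediate from $\|P_{\mathcal{H}_p}\|\leq 1$; for the lower bound I fix $x_0\in X_{\mathrm{reg}}$ and $u_0\in E_{x_0}$ with $|f(x_0)u_0|/|u_0|$ close to $\|f\|_\infty$, take the coherent state $s_p=P_{\mathcal{H}_p}(\cdot,x_0)u_0$, and use Theorem~\ref{toet2.2} and Lemma~\ref{toet2.3} to show $\|s_p\|^2\sim p^n\mathcal{P}_{x_0}(0,0)|u_0|^2$ and $\langle T_{f,p}s_p,s_p\rangle/\|s_p\|^2\to|f(x_0)u_0|^2/|u_0|^2$.

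The main obstacle is the orbifold-specific bookkeeping in step (iii): one must verify that the summation over $G_U$ in the composition formula \eqref{pb4.9} is compatible with the rescaling $Z\mapsto\sqrt{p}Z$ in the near-diagonal expansion, and that cross terms coming from nontrivial $g\in G_U$ are absorbed into the $\mathcal{O}(p^{-\infty})$ remainder. Once this is checked—exactly as in the argument of \cite[\S 6.3--6.4]{ma-ma08a} with our Theorem~\ref{t:main} replacing the spin$^c$ Dirac expansion used there—the rest of the proof, including the commutator and norm identities, proceeds by direct computation on the model space.
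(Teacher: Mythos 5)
Your proposal reconstructs exactly the argument the paper delegates to \cite[\S 6.3--6.4]{ma-ma08a}: the paper's ``proof'' of Theorem~\ref{toet6.7} consists of the single remark that one proceeds by a word-for-word repetition of the arguments there (with Theorem~\ref{t:main} and Lemma~\ref{toet2.3} as inputs), and you have spelled out precisely that chain: verify conditions (i)--(iii) of the criterion Theorem~\ref{toet6.4} for $T_{f,p}T_{g,p}$, localize in an orbifold chart via \eqref{pb4.7a}--\eqref{pb4.7b} and Proposition~\ref{0t3.0}, compose the rescaled model expansions through the $\mathcal K[\cdot,\cdot]$ operation, iterate to produce the $C_r(f,g)$, read off $C_0=fg$ from \eqref{toe2.15}, extract the Poisson bracket from the antisymmetric part of $C_1$ via \eqref{toe2.20}, and use a peak-section argument at a regular point for the norm. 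This is the same route as the paper.

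One small slip: in the norm lower bound you wrote $\langle T_{f,p}s_p,s_p\rangle/\|s_p\|^2\to|f(x_0)u_0|^2/|u_0|^2$, which has wrong homogeneity and, for scalar $f$, would give $f(x_0)$ rather than $|f(x_0)|$. The correct quantity to evaluate is $\|T_{f,p}s_p\|/\|s_p\|$, which tends to $|f(x_0)u_0|/|u_0|$ once one notes, from the near-diagonal expansion of $T_{f,p}(\cdot,x_0)$, that $T_{f,p}s_p$ agrees with $P_p(\cdot,x_0)f(x_0)u_0$ to leading order. With that correction the argument is the standard one from \cite{ma-ma08a}.
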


\begin{rem}\label{toet4.31} As mentioned in
	\cite[Remark 6.14]{ma-ma08a}, by
	\cite[Theorems 6.13, 6.16]{ma-ma08a},
on every compact symplectic orbifold $X$ admitting a prequantum
line bundle $(L, \nabla^{L}, h^L)$, one can define in 
a canonical way an associative star-product  $f*g
=\sum_{l=0}^\infty \hbar^{l}C_l(f,g)\in \mathscr{C}^\infty(X)[[\hbar]]$
for every $f,g\in \mathscr{C}^\infty(X)$,
called the \textbf{\emph{Berezin-Toeplitz star-product}} by using 
the kernel of the spin$^c$ Dirac operator. Moreover, $C_l(f,g)$
are bidifferential operators defined locally as in the smooth case.
Theorem \ref{toet6.7} shows that one can also use the eigenspaces 
of small eigenvalues of the renormalized Bochner-Laplacian.
\end{rem}

\end{document}